\newtheorem{theorem}{Theorem}[section]
\newtheorem{definition}{Definition}[section]
\newtheorem{proposition}{Proposition}[section]
\newtheorem{lemma}{Lemma}[section]
\newtheorem{corollary}{Corollary}[section]
\newtheorem{remark}{Remark}[section]
\newcommand{\R}{\mathbb{R}}
\newcommand{\hl}{\mathbb{R}_{\geq0}}
\newcommand{\abs}[1]{\left\vert #1\right\vert}
\newcommand{\norm}[1]{\left\vert\left\vert #1\right\vert\right\vert}
\title{Balanced homogeneous harmonic maps between cones}
\author{Brian Freidin\\Auburn University\\bgf0012@auburn.edu}
\date{}
\begin{document}

\maketitle

\begin{abstract} We study the degrees of homogeneous harmonic maps between simplicial cones. Such maps have been used to model the local behavior of harmonic maps between singular spaces, where the degrees of homogeneous approximations describe the regularity of harmonic maps. In particular the degrees of homogeneous harmonic maps are related to eigenvalues of discrete normalized graph Laplacians.
\end{abstract}

\section{Introduction}

The study of harmonic maps into singular spaces goes back to the seminal work \cite{gromov-schoen} of Gromov-Schoen, who considered maps into Riemannian simplicial complexes. Of particular interest to the present work is their local approximation of such maps by \emph{homogeneous} maps between tangent cones. The study of harmonic maps was extended by Chen in \cite{chen} to include simplicial domains, and the theory for simplicial domains was further studied in \cite{eells-fuglede}, \cite{mese}, \cite{daskal-mese-1}, \cite{daskal-mese-2}, among others. The H\"older continuity of harmonic maps from Riemannian simplicial complexes to metric spaces of non-positive curvature was established in \cite{eells-fuglede}. Further regularity away from the $(n-2)$-skeleton of an $n$-dimensional simplicial domain was studied in \cite{mese}, \cite{daskal-mese-1}, \cite{daskal-mese-2}.

The blow-up analysis of \cite{gromov-schoen} was emulated by Daskalopoulos and Mese in \cite{daskal-mese-1}, \cite{daskal-mese-2} to construct tangent maps as homogeneous approximations of harmonic maps with singular domains and targets. The general idea is to fix a point $p$ in the domain of a harmonic map $u$, and its image $u(p)$. For $\lambda,\mu>0$ one constructs the map
\[
    u_{\lambda,\mu}(x) = \mu^{-1}u(\lambda x).
\]

A small neighborhood of $p$ in the domain, scaled by the factor $\lambda^{-1}$, serves as the domain of $u_{\lambda,\mu}$. The map first rescales the neighborhood to its original size. Then after mapping by the original map $u$, the image is scaled by a factor of $\mu^{-1}$. If $\lambda$ and $\mu$ are small this has the effect of magnifying small neighborhoods of $p$ and $u(p)$. As $\lambda,\mu\to0$, technical estimates on the relationship between $\lambda$ and $\mu$ ensure the existence of a limit
\[
    u_* = \lim_{\lambda,\mu\to0}u_{\lambda,\mu}.
\]
The limit map $u_*$ has as domain the tangent cone to the domain of $u$ at the point $p$, and has as target the tangent cone at $u(p)$ to the target of $u$. The tangent cone to a smooth Riemannian manifold is just its Euclidean tangent space, while the tangent cone to a simplicial complex is a geometric cone over a small sphere around a point, or its link. The map $u$ is homogeneous in the sense that for each $x$ in the domain cone, $t\mapsto u_*(tx)$ is an (unparametrized) geodesic, and there is some $\alpha>0$ so that $\abs{u_*(tx)} = t^\alpha\abs{u_*(x)}$. Here $\abs{\cdot}$ denotes the distance from the vertex of the target cone. For more details on the blow-up procedure and homogeneous harmonic maps, see the references cited above, especially \cite{gromov-schoen} and \cite{daskal-mese-1}. A typical application of this blow-up procedure is to say that if at each point $p$ in the domain, the corresponding blow-up map $u_*$ has degree $\alpha\geq1$, then the original map $u$ is locally Lipschitz continuous.

In related work (\cite{me-gras-1}, \cite{me-gras-2}) the present author, together with Vict\`oria Gras Andreu, consider a class of harmonic maps between 2-dimensional simplicial complexes. The maps respect the simplicial structure of the domain and target in the sense that vertices (resp. edges, faces) of the domain are mapped to vertices (resp. edges, faces) of the target. In \cite{me-gras-1} the simplicial complexes are endowed with metrics identifying each face with the ideal hyperbolic triangle. This has the result of putting the vertices of the complexes at infinite distance, effectively puncturing the spaces. In \cite{me-gras-2} the complexes are endowed with simplex-wise flat metrics, as well as metrics conformal to flat or ideal hyperbolic metrics. The blow-up procedure described above is repeated in \cite{me-gras-2} to construct tangent maps at edge points and at (non-punctured) vertices.

In \cite{daskal-mese-2} the tangent maps of harmonic maps from 2-dimensional simplicial complexes into Riemannian manifolds are related to eigenfunctions of a discrete Laplace operator on the link $Lk(p)$ of a point $p$. The link of a point $p$ in a 2-complex $X$ is a graph whose vertices correspond to edges of $X$ containing $p$, and whose edges correspond to faces of $X$ containing $p$. The strategy of relating continuous and discrete harmonic maps is explored in detail in \cite{daskal-mese-2} and several references contained therein. The discrete Laplace operator considered in \cite{daskal-mese-2} assumes the geometry in the link $Lk(p)$ is very regular, in particular that all faces of the 2-complex are equilateral triangles so that every edge of $Lk(p)$ has the same length. The main goals of this paper are to extend that analysis to include more of the richness of the geometry that $Lk(p)$ can have, as well as to the setting of tangent maps between simplicial cones constructed in \cite{me-gras-2}.

Before beginning with the main content, we discuss some history of discrete Laplace operators that will become useful later on. For a more detailed discussion, including proofs of quoed results, see e.g. \cite{chung} or \cite{banerjee-jost}. Future work may be done to generlize the results of this paper to higher dimensional cones, at which point the work of \cite{horak-jost} on the spectra of simplicial complexes will likely play the role of the discussion below.

Let $\Gamma$ be a graph with vertices $V=V(\Gamma)$ and edges $E = E(\Gamma)$. One can enumerate the vertices of $\Gamma$ and thus identify functions $f:V\to\R$ with vectors in $\R^{\#V}$. Then the unweighted graph Laplacian is represented by the matrix $\Delta$ with entries
\[
    \Delta_{ij} = \begin{cases} deg(v_i) & i=j\\ -1 & v_i\sim v_j\\ 0 & \text{else}.\end{cases}
\]
Here $v_i\sim v_j$ means that the vertices $v_i$ and $v_j$ are adjacent in $\Gamma$. This matrix $\Delta=D-A$ is the difference between the diagonal degree matrix $D$ and the adjacency matrix $A$ with entries
\[
    D_{ij} = \begin{cases} deg(v_i) & i=j\\ 0 & i\neq j\end{cases} \qquad\text{and}\qquad A_{ij} = \begin{cases}1 & v_i\sim v_j\\ 0 & v_i\not\sim v_j.\end{cases}
\]
A normalized Laplace operator, denoted $\mathcal{L}$, is defined by
\[
    \mathcal{L} = Id - D^{-1/2}AD^{-1/2} = D^{-1/2}\Delta D^{-1/2}.
\]

Both the matrices $\Delta$ and $\mathcal{L}$ are symmetric and positive semi-definite. Indeed, they determine quadratic forms
\[
    \rho\cdot\Delta\rho = \sum_{uv\in E}\Big(\rho(v)-\rho(u)\Big)^2 \qquad\text{and}\qquad \rho\cdot\mathcal{L}\rho = \sum_{uv\in E}\left(\frac{\rho(v)}{\sqrt{deg(v)}} - \frac{\rho(u)}{\sqrt{deg(u)}}\right)^2.
\]
An immediate consequence is that the vector $w=\mathbb{1}$ with $w_i=1$ for all $i$ is in the kernel of $\Delta$ and in fact spans the kernel if $\Gamma$ is connected. Since $\mathcal{L} = D^{-1/2}\Delta D^{-1/2}$, the vector $D^{1/2}\mathbb{1}$ spans the kernel of $\mathcal{L}$. The operator $\mathcal{L}$ is called normalized because its largest eigenvalue is $\leq 2$. The eigenvalue $\lambda=2$ of $\mathcal{L}$ is achieved if and only if $\Gamma$ is bipartite, and the first positive eigenvalue of $\mathcal{L}$ is at most $1$ unless $\Gamma$ is a complete graph $K_n$, in which case the first eigenvalue is $\frac{n}{n-1}$.

If weights are assigned to $\Gamma$ by a function $w:V\cup E\to\hl$. Then one can modify the adjacency matrix so that $A_{ij} = w(v_iv_j)$ when $v_i\sim v_j$, and the degree matrix so that $D_{ii} = w(v_i)$. In case only edge weights are provided, the vertex weights are determined by $w(v_i) = \sum_{v_j\sim v_i}w(v_iv_j)$. In each of these cases one can still define
\[
    \Delta = D-A\qquad\text{and}\qquad\mathcal{L}=Id-D^{-1/2}AD^{-1/2} = D^{-1/2}\Delta D^{-1/2}.
\]
In case $w(e)=1$ for all $e\in E$ and $w(v) = deg(v)$ for $v\in V$ the unweighted operators are recovered. The matrices are still positive semi-definite but their kernels are affected by the weight function.

In both \cite{chung-langlands} and \cite{banerjee-jost} the asymmetric matrix $L = D^{-1}\Delta$ is studied. This matrix also comes up in the present work. Fortunately it is similar to the normalized Laplacian via the similarity
\[
    L = D^{-1/2}\mathcal{L}D^{1/2}.
\]
As a result the spectrum of $L$ is identical to the spectrum of $\mathcal{L}$.

The structure of this paper is as follows. We begin in Section~\ref{S preliminaries} by introducing the spaces and maps to be studied. The spaces include $k$-pods and simplicial cones, together with the metrics they can carry. The maps of interest are homogeneous maps between cones. In Section~\ref{S preliminaries} we also introduce the notion of harmonic maps as well as the balancing conditions that are essential to the current study.

In Section~\ref{S euclidean target} we define and study balanced homogeneous harmonic maps from simplicial cones into Euclidean spaces. In Section~\ref{S k-pod target} we do the same for maps from Euclidean spaces into $k$-pods. Section~\ref{S k-pod target} also allows for a domain with a single singular point, as well as $p$-harmonic maps. In Section~\ref{S singular target} we define and study balanced homogeneous harmonic maps between simplicial cones. Finally in Section~\ref{S puncture} we generalize Sections~\ref{S euclidean target} and~\ref{S singular target} by studying the limits as the angles of each face of the cone tend to 0 and the vertices become punctures.

\section{Preliminaries}\label{S preliminaries}

\subsection{Spaces and metrics}

Our main objects of study will be homogeneous maps between cones. The spaces and maps we consider will have a lot more structure, but the basic definitions are quite general.

\begin{definition}[cone]\label{cone}
    For any topological space $X$, the cone $C(X)$ is the quotient
    \[
        C(X) = \big(X\times\hl\big)/\sim,
    \]
    where $(x,0)\sim(y,0)$ for all $x,y\in X$.
\end{definition}

If $X$ is a space of dimension $n$, then the cone $C(X)$ has dimension $n+1$. Thus one-dimensional cones can be constructed as the cone over a finite set of points. The common point $(x,0)\in C(X)$ is called the \emph{vertex} of the cone.

\begin{definition}[homogeneous map]\label{homogeneous}
    A map $f:C(X)\to C(Y)$ is homogeneous of degree $\alpha>0$ if there are functions $y:X\to Y$ and $s:X\to\hl$ so that
    \[
        f(x,t) = \big(y(x),s(x)t^\alpha\big)
    \]
\end{definition}

Necessarily the image of the vertex of $C(X)$ under a homogeneous map is the vertex of $C(Y)$.

The remainder of this section is devoted to describing which cones will appear as domains and targets of the maps we consider.

\begin{definition}[$k$-pod]
    For $k\in\mathbb{N}$, consider the space $\{x_1,\ldots,x_k\}$ of $k$ discrete points. The $k$-pod is the cone $C(k)=C(\{x_1,\ldots,x_n\})$. The rays $C(x_j) = \{x_j\}\times\hl$ are called the edges of the $k$-pod.
\end{definition}

Each edge of the $k$-pod $C(k)$ is homeomorphic to the half-line $\hl$. Pulling back the standard Euclidean metric from $\hl$ defines a metric on each edge of $C(k)$. We will also always use these homeomorphisms (now isometries) to give coordinates on each edge.

For a 2-dimensional cone, we must take the cone over a 1-dimensional space. The 1-dimensional spaces we will consider are graphs $\Gamma$, with vertices $V(\Gamma)$ and edges $E(\Gamma)$.

\begin{definition}[2-dimensional cone]\label{2-d}
    For a connected finite simplicial graph $\Gamma$, consider the cone $C(\Gamma)$. The rays $C(v)=v\times\hl$ for vertices $v\in V(\Gamma)$ are edges of the cone, and the sectors $C(e)$ for edges $e\in E(\Gamma)$ are faces of the cone.
\end{definition}

Each face of the cone $C(\Gamma)$ is homeomorphic to a sector of the plane $\R^2$. In order to determine the geometry of the cone one needs only specify the angle formed by each face at the vertex of the cone.

\begin{definition}[metrics]\label{metric}
    Given a cone $C(\Gamma)$ over a graph $\Gamma$, a function $\theta:E(\Gamma)\to(0,\pi)$ determines a metric. For $e\in E(\gamma)$ the face $C(e)$ is isometric to the sector $S_{\theta(e)}$ of the Euclidean plane given in polar coordinates by
    \[
        S_{\theta(e)} = \{(r,\theta) \vert r\geq0,0\leq\theta\leq\theta(e)\}.
    \]
    Let $C(\Gamma,\theta)$ denote the space $C(\Gamma)$ endowed with the metric determined by $\theta$.
\end{definition}

\begin{remark}
    One could also define metrics on $C(\Gamma)$ for functions $\theta:E(\Gamma)\to\R_{>0}$. For $0<\theta(e)<2\pi$ one can still model $C(e)$ by the sector $S_{\theta(e)}$. For $\theta\geq2\pi$ one could use a sector on a cone with cone angle $>2\pi$. Though most results are unchanged when considering this more general class of metrics, we will not consider them here for two main reasons. First, the simplicial cones considered here are meant to model tangent cones to Euclidean simplicial complexes, all of whose angles are at most $\pi$. And second, the analysis for $\theta\geq\pi$ is more subtle and distracts from the main flow of many arguments.
\end{remark}

Given a graph $\Gamma$ and an edge $e=(p,q)\in E(\Gamma)$, one can subdivide $e$ by adding a vertex $x$ in the interior. This produces a graph $\Gamma'$ with vertex set $V(\Gamma')=V(\Gamma)\cup\{x\}$ and edge set $E(\Gamma')=\Big(E(\Gamma)\backslash\{e\}\Big)\cup\{px,xq\}$.

Given a function $\theta:E(\Gamma)\to(0,\pi)$ and an edge $e\in E(\Gamma)$, when one subdivides the edge $e$ to produce $\Gamma'$ as above one can define a function $\theta':E(\Gamma')\to(0,\pi)$. For $e'\neq e$ set $\theta'(e')=\theta(e')$, and set $\theta'(px)=\alpha$ and $\theta'(xq)=\beta$ in any way so that $\alpha+\beta=\theta(e)$. Any pair $(\Gamma^*,\theta^*)$ obtained from $(\Gamma,\theta)$ by a finite sequence of edge subdivisions as in these two paragraphs is called a subdivision of $(\Gamma,\theta)$, and the cones $C(\Gamma^*,\theta^*)$ and $C(\Gamma,\theta)$ are isometric.

In general two cones $C(\Gamma_1,\theta_1)$ and $C(\Gamma_2,\theta_2)$ are isometric if and only if the graphs have a common subdivision $(\Gamma^*,\theta^*)$.

In $C(\Gamma,\theta)$ identify an edge $e\in E(\Gamma)$ with the interval $[0,1]$. Taking polar coordinates $(\rho,\varphi)$ in $S_{\theta(e)}$, a concrete isometry $\psi_e:C(e)=C([0,1])\to S_{\theta(e)}$ is given by

\[
    \psi_e(x,t)=(\rho,\varphi)\qquad\text{with}\qquad \rho = t,\quad \varphi = x\theta(e).
\]

We will also use these isometries to give coordinates on each face of $C(\Gamma,\theta)$. But note that the map $\psi_e(x,t) = (t,(1-x)\theta(e))$ also defines an isometry $C(e)\to S_{\theta(e)}$. Our choice of isometry will be determined by which endpoint of $e$ is most relevant at the moment.

\begin{definition}
    In a cone $C(\Gamma,\theta)$, fix a face $C(e)$ for some $e\in E(\Gamma)$. If $v\in V(\Gamma)$ is one endpoint of $e$, then the coordinates on $C(E)$ adapted to $v$ are the coordinates determined by the isometry $\psi_e:C(e)\to S_{\theta(e)}$ that sends the point $(v,1)\in C(e)$ to $(1,0)\in S_{\theta(e)}$.
\end{definition}

One can also view the Euclidean plane $\R^2$ as a 2-dimensional cone. Namely, for the cycle graph $C_n$ consisting of $n$ vertices and $n$ edges, $\R^2 = C(C_n,\theta)$ for any $\theta:E(C_n)\to(0,\pi)$ with $\sum_{e\in E(C_n)}\theta(e)=2\pi$. More generally, we will sometimes consider cones over the cycle graph with different metrics.

\begin{definition}
    A smooth cone is a geometric cone over the cycle graph. In other words, it is a space $C(C_n,\theta)$ for some $\theta:E(C_n)\to(0,\pi)$.
\end{definition}

The curvature at the vertex of a smooth cone is determined by the sum $\sum_{e\in E(C_n)}\theta(e)$. If the sum is less than $2\pi$ the vertex has positive curvature, if the sum is equal to $2\pi$ the vertex is flat, and if the sum is greater than $2\pi$ the vertex has negative curvature.

\subsection{Harmonic functions on sectors}

When it comes time to consider harmonic maps between cones, we will need to understand the structure of those harmonic maps. Our domains will always be 2-dimensional cones $C(\Gamma,\theta)$, all of whose faces are isometric to plane sectors $S_\theta$. To begin this section we first aim to understand harmonic functions $f:S_{\theta_0}\to\R$.

\begin{lemma}\label{triggy}
    A homogeneous function $u:S=S_{\theta_0}\to\R$ is harmonic if and only if there are constants $c_1,c_2$ so that
    \[
        u(r,\theta) = r^\alpha\Big(c_1\cos(\alpha\theta) + c_2\sin(\alpha\theta)\Big).
    \]
\end{lemma}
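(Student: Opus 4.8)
The plan is to reduce the PDE to an ODE via the separation of variables that homogeneity forces. First I would unwind the definitions: a function $u$ on $S_{\theta_0}$ that is homogeneous of degree $\alpha>0$ is completely determined by its restriction $g(\theta):=u(1,\theta)$ to the unit arc, via $u(r,\theta)=r^\alpha g(\theta)$. Since $u$ is harmonic it is real-analytic on the interior of the sector, so $g$ is smooth on $(0,\theta_0)$ and the computation below is legitimate.

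Next I would write the Laplacian in polar coordinates, $\Delta = \partial_r^2 + \frac1r\partial_r + \frac1{r^2}\partial_\theta^2$, and substitute the homogeneous ansatz. From $\partial_r u = \alpha r^{\alpha-1}g$, $\partial_r^2 u = \alpha(\alpha-1)r^{\alpha-2}g$, and $\partial_\theta^2 u = r^\alpha g''$, one obtains $\Delta u = r^{\alpha-2}\big(g''(\theta)+\alpha^2 g(\theta)\big)$. Since $r^{\alpha-2}$ never vanishes on the (open) sector, $u$ is harmonic there if and only if $g$ solves the linear ODE $g''+\alpha^2 g=0$ on $(0,\theta_0)$.

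Finally I would invoke the elementary theory of constant-coefficient linear second-order ODEs: because $\alpha>0$, the characteristic roots are $\pm i\alpha$, so the general solution is $g(\theta)=c_1\cos(\alpha\theta)+c_2\sin(\alpha\theta)$, which yields the stated formula. For the converse implication, any $u(r,\theta)=r^\alpha\big(c_1\cos(\alpha\theta)+c_2\sin(\alpha\theta)\big)$ is manifestly homogeneous of degree $\alpha$, and running the displayed computation backwards shows $\Delta u=0$.

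There is no serious obstacle in this argument; the only two points that merit a careful word are (i) justifying that a homogeneous harmonic function is regular enough for the differentiations above, which follows from interior elliptic regularity, and (ii) observing that the exponent $\alpha$ appearing in the conclusion is precisely the degree of homogeneity assumed in the hypothesis — in particular $\alpha\neq0$, so the degenerate solution $g=c_1+c_2\theta$ of $g''=0$ (corresponding to the harmonic function $c_1+c_2\theta$, homogeneous of degree $0$) does not arise.
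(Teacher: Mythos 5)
Your proposal is correct and follows essentially the same route as the paper: substitute the homogeneous ansatz $u=r^\alpha\rho(\theta)$ into the polar-coordinate Laplacian, reduce to the ODE $\rho''+\alpha^2\rho=0$, and solve. The extra remarks on interior regularity and on excluding the degenerate $\alpha=0$ case are sensible additions but do not change the argument.
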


\begin{proof}
    In polar coordinates, a homogeneous function $u(r,\theta)$ of degree $\alpha$ has the form
    \[
        u(r,\theta) = r^\alpha u(1,\theta) = r^\alpha \rho(\theta).
    \]
    And in polar coordinates the Laplacian of $u$ reads
    \[
        \Delta u = \frac{\partial^2 u}{\partial r^2} + \frac{1}{r}\frac{\partial u}{\partial r} + \frac{1}{r^2}\frac{\partial^2 u}{\partial \theta^2} = r^{\alpha-2}\Big(\alpha^2\rho(\theta)+\rho''(\theta)\Big).
    \]

    To be harmonic, $u$ must satisfy $\Delta u=0$, and this equation must hold for all $(r,\theta)$. Hence $\rho$ must satisfy $\rho''(\theta)+\alpha^2\rho(\theta)=0$. The solutions to this ordinary differential equation are precisely
    \[
        \rho(\theta) = c_1\cos(\alpha\theta) + c_2\sin(\alpha\theta).
    \]
\end{proof}

In order to turn our study into a discrete problem later, we introduce the following result that says homogeneous harmonic functions are described by discrete data.

\begin{lemma}\label{BVP}
    A homogeneous harmonic function $u(r,\theta)=r^\alpha \rho(\theta):S=S_{\theta_0}\to\R$ is uniquely determined by the two numbers $\rho(0)$ and $\rho(\theta_0)$ unless $\alpha\theta_0\in\pi\mathbb{Z}$.
\end{lemma}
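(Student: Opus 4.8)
The plan is to leverage Lemma~\ref{triggy}, which tells us that $u(r,\theta)=r^\alpha\rho(\theta)$ with $\rho(\theta)=c_1\cos(\alpha\theta)+c_2\sin(\alpha\theta)$; thus $u$ is completely determined by the pair of constants $(c_1,c_2)$. The claim is that the linear map $(c_1,c_2)\mapsto(\rho(0),\rho(\theta_0))$ is invertible precisely when $\alpha\theta_0\notin\pi\mathbb{Z}$, so that the boundary values $\rho(0)$ and $\rho(\theta_0)$ in turn determine $(c_1,c_2)$ and hence $u$.

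First I would compute the two boundary values directly: $\rho(0)=c_1\cos(0)+c_2\sin(0)=c_1$, and $\rho(\theta_0)=c_1\cos(\alpha\theta_0)+c_2\sin(\alpha\theta_0)$. Writing this as a linear system in $(c_1,c_2)$, the coefficient matrix is
\[
    M=\begin{pmatrix} 1 & 0 \\ \cos(\alpha\theta_0) & \sin(\alpha\theta_0)\end{pmatrix},
\]
whose determinant is $\sin(\alpha\theta_0)$. Then I would observe that $M$ is invertible if and only if $\sin(\alpha\theta_0)\neq0$, which happens exactly when $\alpha\theta_0\notin\pi\mathbb{Z}$. When $M$ is invertible, solving the system recovers $c_1=\rho(0)$ and $c_2=\bigl(\rho(\theta_0)-\rho(0)\cos(\alpha\theta_0)\bigr)/\sin(\alpha\theta_0)$, so $u$ is uniquely determined; when $\alpha\theta_0\in\pi\mathbb{Z}$ the map $c_2\mapsto\rho(\theta_0)$ is constant (since $\sin(\alpha\theta_0)=0$), so $\rho(\theta_0)$ carries no information about $c_2$ beyond the constraint $\rho(\theta_0)=\pm\rho(0)$, and uniqueness fails.

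There is no real obstacle here — this is essentially a $2\times2$ determinant computation — but one should be slightly careful about what ``uniquely determined'' means: it should be read as saying the assignment from boundary data to the harmonic function is well-defined and injective on its domain, not that every pair $(\rho(0),\rho(\theta_0))$ is achievable in the degenerate case (it is, but only a codimension-one set, and not uniquely). I would phrase the proof so that it is clear we are inverting the linear system above, and perhaps remark that in the exceptional case $\alpha\theta_0\in\pi\mathbb{Z}$ the kernel of the boundary-value map is spanned by $\rho(\theta)=\sin(\alpha\theta)$, which vanishes at both $\theta=0$ and $\theta=\theta_0$, exhibiting the non-uniqueness explicitly.
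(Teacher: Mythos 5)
Your proposal is correct and follows essentially the same route as the paper: both apply Lemma~\ref{triggy}, read off $c_1=\rho(0)$ from the first boundary value, and solve $\rho(\theta_0)=\rho(0)\cos(\alpha\theta_0)+c_2\sin(\alpha\theta_0)$ for $c_2$, which succeeds exactly when $\sin(\alpha\theta_0)\neq0$. Your packaging of this as the invertibility of a $2\times2$ linear system and your remark about the kernel being spanned by $\sin(\alpha\theta)$ in the degenerate case are just slightly more explicit framings of the same computation.
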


\begin{proof}
    Given the form of $\rho(\theta)$ from the Lemma~\ref{triggy}, one easily solves the boundary value problem on $S = S_{\theta_0}$.
    \begin{eqnarray*}
        \rho(0) & = & c_1\cos(0)+c_1\sin(0)\\
         & = & c_1.\\
        \rho(\theta_0) & = & c_1\cos(\alpha\theta_0) + c_2\sin(\alpha\theta_0)\\
         & = & \rho(0)\cos(\alpha\theta_0) + c_2\sin(\alpha\theta_0).
    \end{eqnarray*}
    So as long as $\alpha\theta_0\not\in\pi\mathbb{Z}$, we have $\sin(\alpha\theta_0)\neq0$ so we can write
    \[
        \rho(\theta) = \rho(0)\cos(\alpha\theta) + \frac{\rho(\theta_0)-\rho(0)\cos(\alpha\theta_0)}{\sin(\alpha\theta_0)}\sin(\alpha\theta).
    \]
    
    In case $\alpha\theta_0=n\pi$, $c_2$ can be any real number but we must have
    \[
        \rho(\theta_0) = \rho(0)\cos(n\pi) = (-1)^n \rho(0).
    \]
    In this case $\rho$ has the form
    \[
        \rho(\theta) = \rho(0)\cos(\alpha\theta) + c_2\sin(\alpha\theta).
    \]
\end{proof}

When we consider maps into $k$-pods, we recall that each edge of a $k$-pod is isometric to the half-line, $\hl$. Thus we must understand which homogeneous harmonic functions have image contained in the half-line.

\begin{proposition}\label{non-neg}
    A non-trivial homogeneous harmonic map $u(r,\theta)=r^\alpha \rho(\theta):S=S_{\theta_0}\to\hl$ exists if and only if $0<\alpha\theta_0\leq\pi$. Moreover when $0<\alpha\theta_0<\pi$, any pair of numbers $\rho(0),\rho(\theta_0)\geq 0$, not both 0, uniquely determine such a function.
\end{proposition}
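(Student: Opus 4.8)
The plan is to use the explicit description of homogeneous harmonic functions from Lemma~\ref{triggy}, namely $\rho(\theta) = c_1\cos(\alpha\theta) + c_2\sin(\alpha\theta)$, and determine exactly when such a function can be made non-negative on the whole interval $[0,\theta_0]$. First I would observe that any such $\rho$ is a sinusoid: writing $\rho(\theta) = R\cos(\alpha\theta - \varphi)$ for appropriate amplitude $R>0$ and phase $\varphi$ (when $(c_1,c_2)\neq(0,0)$), the function $\rho$ is a cosine wave of angular frequency $\alpha$. As $\theta$ ranges over $[0,\theta_0]$, the argument $\alpha\theta - \varphi$ ranges over an interval of length $\alpha\theta_0$. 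A cosine is non-negative precisely on intervals of the form $[-\pi/2 + 2\pi k, \pi/2 + 2\pi k]$, each of length $\pi$; hence $\rho\geq0$ on an interval of length $\alpha\theta_0$ is possible if and only if that interval fits inside one such arc, i.e. if and only if $\alpha\theta_0\leq\pi$. This simultaneously shows non-negativity forces $\alpha\theta_0\leq\pi$ and that when $\alpha\theta_0\leq\pi$ one can indeed choose the phase (e.g. center the arc, taking $\varphi$ so the argument runs over $[-\alpha\theta_0/2, \alpha\theta_0/2]$) to get a non-trivial non-negative solution, establishing the ``if and only if'' for existence. I should also note $\alpha\theta_0 > 0$ automatically since $\alpha > 0$ and $\theta_0 > 0$.

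Next I would address the uniqueness claim for the strict range $0 < \alpha\theta_0 < \pi$. Here $\sin(\alpha\theta_0)\neq0$, so by Lemma~\ref{BVP} the two boundary values $\rho(0)$ and $\rho(\theta_0)$ determine $\rho$ uniquely among all homogeneous harmonic functions, via the formula
\[
    \rho(\theta) = \rho(0)\cos(\alpha\theta) + \frac{\rho(\theta_0) - \rho(0)\cos(\alpha\theta_0)}{\sin(\alpha\theta_0)}\sin(\alpha\theta).
\]
So the content to verify is that when $\rho(0),\rho(\theta_0)\geq0$ and not both zero, this particular $\rho$ is actually non-negative on all of $[0,\theta_0]$ — it is the only candidate, and we must check it lands in $\hl$. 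The cleanest argument is a maximum-principle / concavity observation: on $[0,\theta_0]$ with $\alpha\theta_0<\pi$, we have $\rho''=-\alpha^2\rho$, so wherever $\rho$ would be negative it would be convex, and wherever positive it is concave; more directly, if $\rho$ attained a negative value somewhere in the open interval it would have an interior negative minimum $\theta_*$, where $\rho(\theta_*)<0$ and $\rho''(\theta_*)\geq0$, contradicting $\rho''(\theta_*) = -\alpha^2\rho(\theta_*) > 0$. Since $\rho\geq0$ at both endpoints, $\rho\geq0$ throughout. (Non-triviality: if $\rho\equiv0$ then $\rho(0)=\rho(\theta_0)=0$, excluded by hypothesis.)

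Finally I would handle the boundary case $\alpha\theta_0 = \pi$ to complete the existence direction and to be transparent about why uniqueness is only claimed for the strict inequality. When $\alpha\theta_0 = \pi$ we have $\sin(\alpha\theta_0) = 0$ and $\cos(\alpha\theta_0) = -1$, so Lemma~\ref{BVP} shows $\rho$ is not determined by its boundary values (the coefficient $c_2$ is free) and moreover the boundary values must satisfy $\rho(\theta_0) = -\rho(0)$, so a non-negative solution with $\rho\geq0$ at both ends forces $\rho(0)=\rho(\theta_0)=0$; nonetheless $\rho(\theta) = c_2\sin(\alpha\theta)$ is a non-trivial non-negative function on $[0,\pi/\alpha]$ for any $c_2>0$, confirming existence at the endpoint. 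I expect the main obstacle to be organizing the case analysis cleanly — in particular making the amplitude-phase reduction rigorous when one of $c_1, c_2$ vanishes, and being careful that ``non-trivial'' is interpreted as $\rho\not\equiv0$ rather than as both boundary values being positive — but no step requires a difficult computation.
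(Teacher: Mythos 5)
Your overall structure mirrors the paper's: invoke Lemma~\ref{triggy} for the form of $\rho$, rule out $\alpha\theta_0>\pi$, handle $\alpha\theta_0=\pi$ by forcing $\rho(0)=\rho(\theta_0)=0$, and use Lemma~\ref{BVP} for uniqueness when $0<\alpha\theta_0<\pi$. Your amplitude--phase packaging of the existence ``if and only if'' is correct and arguably cleaner than the paper's pointwise evaluation at $\alpha\theta=0$ and $\alpha\theta=\pi$. But there is a genuine flaw in the step where you verify that the unique candidate $\rho$ determined by $\rho(0),\rho(\theta_0)\geq0$ is actually non-negative. Your ``maximum-principle'' argument does not close: at an interior negative minimum $\theta_*$ you have $\rho''(\theta_*)\geq0$, and the equation gives $\rho''(\theta_*)=-\alpha^2\rho(\theta_*)>0$; these two statements are perfectly consistent, not contradictory. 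The underlying issue is that the zeroth-order term in $\rho''+\alpha^2\rho=0$ has the wrong sign for a maximum principle, and indeed your argument nowhere uses the hypothesis $\alpha\theta_0<\pi$ --- it would equally ``prove'' that $\rho(\theta)=\cos(\alpha\theta)$ is non-negative on an interval with $\alpha\theta_0=2\pi$, where it has non-negative endpoint values but is negative in the middle. The same objection applies to the convexity remark: a strictly convex function vanishing at the two ends of a subinterval is negative in between, which is exactly the configuration you are trying to exclude, so convexity on the negative set yields no contradiction.

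The statement you want is nonetheless true, and you already have the right tool in your first paragraph: since $\rho=R\cos(\alpha\theta-\varphi)$ and the set $\{\cos\geq0\}$ is a disjoint union of closed arcs of length $\pi$ separated by gaps of length $\pi$, two argument values at distance $\alpha\theta_0<\pi$ with non-negative cosine must lie in the \emph{same} arc, and hence the entire argument interval does; this proves $\rho\geq0$ on all of $[0,\theta_0]$. Equivalently, consecutive zeros of a non-trivial solution of $\rho''+\alpha^2\rho=0$ are exactly $\pi/\alpha$ apart, so $\rho$ has at most one zero in an interval of length $\theta_0<\pi/\alpha$ and cannot change sign away from its endpoint values. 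The paper accomplishes the same thing by showing directly that $\rho$ has no interior zero, via the monotonicity of $\cot(\alpha\theta)$ on $(0,\theta_0)$. Any of these repairs is short, but as written your uniqueness paragraph does not establish that the candidate lands in $\hl$.
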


\begin{proof}
    According to Lemma~\ref{triggy}, a homogeneous harmonic function $u:S_{\theta_0}\to\R$ has the form $u(r,\theta)=r^\alpha\rho(\theta)$ with
    \[
        \rho(r) = c_1\cos(\alpha\theta)+c_2\sin(\alpha\theta).
    \]
    If $\alpha\theta_0>\pi$, then both $\theta=0$ and $\theta=\pi$ are in the domain of $\rho$. If $c_2\neq0$ then either $\rho(1,0)$ or $\rho(1,\pi)$ is negative. And if $c_2=0$ but $c_1\neq0$ then $\rho$ changes signs around $\theta=\pi$. In this case no homogeneous harmonic map $u:S_{\theta_0}\to\hl$ exists.

    If $\alpha\theta_0=\pi$ then $\rho(\pi)=-\rho(0)$, so to be non-negative $\rho(0)=\rho(\pi)=0$. Now
    \[
        \rho(\theta) = c_2\sin(\alpha\theta).
    \]
    The values of $\sin(\alpha\theta)$ remain positive for $0<\theta<\theta_0$, so for the image of $u$ to lie in $\hl$ $c_2$ must be non-negative.
    
    Finally if $0<\alpha\theta_0<\pi$, Lemma~\ref{BVP} tells us that
    \[
        \rho(\theta) = \rho(0)\cos(\alpha\theta) + \frac{\rho(\theta_0)-\rho(0)\cos(\alpha\theta_0)}{\sin(\alpha\theta_0)}\sin(\alpha\theta).
    \]
    Clearly both $\rho(0)$ and $\rho(\theta_0)$ must be non-negative to ensure $u\geq0$. Now $\rho(\theta)$ only vanishes when
    \begin{eqnarray*}
        0 & = & \rho(0)\cos(\alpha\theta) + \frac{\rho(\theta_0)-\rho(0)\cos(\alpha\theta_0)}{\sin(\alpha\theta_0)}\sin(\alpha\theta)\\
        \rho(0)\cot(\alpha\theta) & = & \frac{\rho(0)\cos(\alpha\theta_0)-\rho(\theta_0)}{\sin(\alpha\theta_0)}\\
         & = & \rho(0)\cot(\alpha\theta_0) - \rho(\theta_0)\csc(\alpha\theta_0).
    \end{eqnarray*}
    If $\rho(0)=0$ then $\rho(\theta_0)=0$ also, in which case $u$ is a trivial map. And if $\rho(0)>0$ then $\rho(\theta)$ cannot vanish until $\cot(\alpha\theta)<\cot(\alpha\theta_0)$. But $\cot$ is a decreasing function, so this cannot happen for $0<\theta<\theta_0$.
    
    Thus in this last case where $0<\alpha\theta_0<\pi$, any pair of non-negative numbers $\rho(0)$ and $\rho(\theta_0)$, not both 0, uniquely determine a homogeneous harmonic function $u:S_{\theta_0}\to\hl$ of degree $\alpha$.
\end{proof}

Finally, when we discuss harmonic maps between 2-dimensional cones, we will need to understand harmonic maps between sectors. We will only consider those maps that send the boundary of the domain sector to the boundary of the target.

\begin{proposition}\label{maps between sectors}
    Fix $0<\theta_0,\varphi_0<\pi$. Then a non-trivial homogeneous harmonic map $u:S_{\theta_0}\to S_{\varphi_0}$ exists if and only if $0<\alpha\theta_0\leq\pi$. Moreover when $0<\alpha\theta_0<\pi$, any pair of numbers $\abs{u(1,0)},\abs{u(1,\theta_0)}\geq 0$, not both 0, uniquely determine such a map.
\end{proposition}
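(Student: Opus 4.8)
The plan is to reduce the two-dimensional target problem to the one-dimensional one already settled in Proposition~\ref{non-neg}, by composing $u$ with the two linear functionals that cut out the convex sector $S_{\varphi_0}$. Since $\varphi_0<\pi$, the sector $S_{\varphi_0}$ is a convex subset of $\R^2$ bounded by two rays from the origin, and it is the intersection of the two closed half-planes $\{\ell_1\geq0\}$ and $\{\ell_2\geq0\}$, where $\ell_1(x,y)=y$ vanishes on the edge $\varphi=0$ and $\ell_2(x,y)=x\sin\varphi_0-y\cos\varphi_0 = r\sin(\varphi_0-\varphi)$ vanishes on the edge $\varphi=\varphi_0$; both are strictly positive on the interior, and they are linearly independent because $\sin\varphi_0\neq0$. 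Because $S_{\varphi_0}$ is flat and convex, a map $u:S_{\theta_0}\to S_{\varphi_0}$ is harmonic exactly when its two Euclidean coordinate functions are harmonic, equivalently when $\ell_1\circ u$ and $\ell_2\circ u$ are harmonic; and $u$ takes values in $S_{\varphi_0}$ precisely when $\ell_1\circ u\geq0$ and $\ell_2\circ u\geq0$. Conversely $u$ is recovered from the pair $(\ell_1\circ u,\ell_2\circ u)$ by inverting this linear system. Since $u$ is homogeneous of degree $\alpha$, both coordinate functions are homogeneous of degree $\alpha$ in the usual sense, so Lemma~\ref{triggy} applies to each.

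Next I would use the boundary hypothesis. A homogeneous degree-$\alpha$ map sends each bounding ray $\{\theta=0\}$, $\{\theta=\theta_0\}$ of $S_{\theta_0}$ to a ray from the origin (or to the vertex), and requiring that image to lie in $\partial S_{\varphi_0}$ forces it to be one of the two bounding rays of $S_{\varphi_0}$ (or the vertex). Pre-composing with the reflection $\theta\mapsto\theta_0-\theta$ of $S_{\theta_0}$ and post-composing with the reflection $\varphi\mapsto\varphi_0-\varphi$ of $S_{\varphi_0}$ — operations that preserve ``homogeneous harmonic'' and merely permute the boundary values $\abs{u(1,0)},\abs{u(1,\theta_0)}$ — it suffices to treat the case where $\{\theta=0\}$ maps into $\{\varphi=0\}$ and $\{\theta=\theta_0\}$ maps into $\{\varphi=\varphi_0\}$; the degenerate sub-cases, where a bounding ray collapses to the vertex or both map into the same edge, collapse the image into a half-line and are handled directly by Proposition~\ref{non-neg}. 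In the remaining case $g:=\ell_1\circ u$ is a nonnegative homogeneous harmonic function vanishing at $\theta=0$, so by Lemma~\ref{triggy} $g(r,\theta)=c\,r^\alpha\sin(\alpha\theta)$ for some $c\geq0$; likewise $h:=\ell_2\circ u=d\,r^\alpha\sin\big(\alpha(\theta_0-\theta)\big)$ with $d\geq0$.

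For the ``only if'' direction: if $u$ is non-trivial then $g$ or $h$ is not identically $0$, and nonnegativity of $c\sin(\alpha\theta)$ (resp. $d\sin(\alpha(\theta_0-\theta))$) on $(0,\theta_0)$ forces $\alpha\theta_0\leq\pi$, which is exactly the computation behind Proposition~\ref{non-neg}. For the ``if'' direction and the uniqueness claim, assume $0<\alpha\theta_0<\pi$. Writing $u(1,0)$ and $u(1,\theta_0)$ as points on the two bounding rays, we get $\ell_1(u(1,0))=0$, $g(1,\theta_0)=\abs{u(1,\theta_0)}\sin\varphi_0$ and $h(1,0)=\abs{u(1,0)}\sin\varphi_0$; since $\sin(\alpha\theta_0)\neq0$ these determine $c=\abs{u(1,\theta_0)}\sin\varphi_0/\sin(\alpha\theta_0)\geq0$ and $d=\abs{u(1,0)}\sin\varphi_0/\sin(\alpha\theta_0)\geq0$, hence $g$, $h$, and finally $u$ by the linear inversion. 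Conversely, for any $\abs{u(1,0)},\abs{u(1,\theta_0)}\geq0$ not both $0$, defining $c,d$ by these formulas yields $g,h\geq0$ not both $0$, so the inverted map $u$ is a non-trivial homogeneous harmonic map with image in $S_{\varphi_0}$. When $\alpha\theta_0=\pi$ one checks, as in Proposition~\ref{non-neg}, that $g$ or $h$ may still be taken nonzero, giving existence but no longer uniqueness.

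I expect the main obstacle to be the bookkeeping around the boundary condition: verifying that ``sends boundary to boundary'' really forces each bounding ray of the domain onto a single bounding ray of the target, and cleanly disposing of the degenerate configurations (a bounding ray collapsing to the vertex, or the image degenerating to a half-line) by appeal to Proposition~\ref{non-neg}. The analytic core — that a harmonic map into the flat convex sector is a pair of harmonic coordinate functions, and that nonnegativity of $\sin(\alpha\theta)$ on $(0,\theta_0)$ is equivalent to $\alpha\theta_0\leq\pi$ — is already contained in Lemmas~\ref{triggy},~\ref{BVP} and Proposition~\ref{non-neg}.
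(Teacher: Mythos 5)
Your argument is correct and, at bottom, follows the same strategy as the paper's proof: reduce $u$ to a pair of scalar homogeneous harmonic functions on $S_{\theta_0}$ by composing with linear coordinates on the flat target, apply Lemma~\ref{triggy} to each, solve the boundary value problem for the coefficients, and verify that the image stays in the sector. The one real difference is your choice of coordinates. The paper uses the raw rectangular components $(\chi,\eta)$ and must check the two inequalities $\eta\geq0$ and $\chi\geq\eta\cot(\varphi_0)$ at the end by a short computation; you instead use the two supporting functionals $\ell_1=y$ and $\ell_2=x\sin\varphi_0-y\cos\varphi_0$ cutting out the convex sector, so each composite is forced to be a nonnegative multiple of $r^\alpha\sin(\alpha\theta)$, resp.\ $r^\alpha\sin\big(\alpha(\theta_0-\theta)\big)$, and both the necessity of $\alpha\theta_0\leq\pi$ and the containment of the image become immediate and symmetric in the two edges. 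The price is the bookkeeping about which bounding ray of the domain lands on which bounding ray of the target: the paper silently adopts the convention $\{\theta=0\}\mapsto\{\varphi=0\}$, $\{\theta=\theta_0\}\mapsto\{\varphi=\varphi_0\}$ when writing down its boundary conditions, while you justify it via reflections and dispose of the collapsed configurations through Proposition~\ref{non-neg}. One honest remark applying to both proofs: the configuration in which both boundary rays of $S_{\theta_0}$ land on the same edge of $S_{\varphi_0}$ does yield a nontrivial map with the same boundary magnitudes, so the stated ``uniqueness'' is really uniqueness under the orientation convention; your explicit acknowledgement of these cases is, if anything, more careful than the paper's.
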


\begin{proof}
    Use polar coordinates $(r,\theta)$ in the domain sector $S_{\theta_0}$ and rectangular coordinates $(x,y)$ in the target sector $S_{\varphi_0}$. Then a homogeneous map $u:S_{\theta_0}$ can be written
    \[
        u(r,\theta) = \Big(x(r,\theta),y(r,\theta)\Big).
    \]
    As $S_{\varphi_0}\subset\R^2$ is flat, the map $u$ is harmonic if and only if the functions $x$ and $y$ are. So Lemma~\ref{triggy} says $x(r,\theta)=r^\alpha\chi(\theta)$ and $y(r,\theta)=r^\alpha\eta(\theta)$ with
    \begin{align*}
        \chi(\theta) & = c_1\cos(\alpha\theta)+c_2\sin(\alpha\theta)\\
        \eta(\theta) & = c_3\cos(\alpha\theta) + c_4\sin(\alpha\theta).
    \end{align*}
    
    First if $\alpha\theta_0>\pi$, then just as in Proposition~\ref{non-neg}, $\eta(\theta)$ cannot remain positive for $0\leq\theta\leq\theta_0$. But every point $(x,y)\in S_{\varphi_0}$ has $y\geq0$, so in this case no map $u:S_{\theta_0}\to S_{\varphi_0}$ exists.
    
    Now suppose $0<\alpha\theta_0\leq\pi$ and let $\rho_0=\abs{u(1,0)}$ and $\rho_1 = \abs{u(1,\theta_0)}$. In order to map the boundary of $S_{\theta_0}$ to the boundary of $S_{\varphi_0}$, the maps $\chi$ and $\eta$ must satisfy the following conditions:
    \begin{align*}
        \chi(0) & = \rho_0 & \chi(\theta_0) & = \rho_1\cos(\varphi_0)\\
        \eta(0) & = 0 & \eta(\theta_0) & = \rho_1\sin(\varphi_0).
    \end{align*}
    In other words,
    \begin{align*}
        c_1 & = \rho_0 & c_1\cos(\alpha\theta_0)+c_2\sin(\alpha\theta_0) & = \rho_1\cos(\varphi_0)\\
        c_3 & = 0 & c_3\cos(\alpha\theta_0)+c_4\sin(\alpha\theta_0) & = \rho_1\sin(\varphi_0).
    \end{align*}
    In all cases $c_1=\rho_0$ and $c_3=0$ are determined.
    
    If $\alpha\theta_0=\pi$, then
    \[
        -\rho_0 = \rho_1\cos(\varphi_0) \qquad\text{and}\qquad 0=\rho_1\sin(\varphi_0).
    \]
    Since $0<\varphi_0<\pi$, this means that $\rho_0=\rho_1=0$. Now
    \[
        \chi(\theta) = c_2\sin(\alpha\theta) \qquad\text{and}\qquad \eta(\theta)=c_4\sin(\alpha\theta).
    \]
    As long as $(c_2,c_4)\in S_{\varphi_0}$, the image of the corresponding map $u$ lies in $S_{\varphi_0}$.
    
    Finally if $0<\alpha\theta_0<\pi$ one can explicitly solve for the constants $c_2$ and $c_4$ to find
    \begin{align*}
        \chi(\theta) & = \rho_0\cos(\alpha\theta) + \frac{\rho_1\cos(\varphi_0)-\rho_0\cos(\alpha\theta_0)}{\sin(\alpha\theta_0)}\sin(\alpha\theta)\\
        \eta(\theta) & = \frac{\sin(\varphi_0)}{\sin(\alpha\theta_0)}\rho_1\sin(\alpha\theta).
    \end{align*}
    To verify that the corresponding map $u$ has image in $S_{\varphi_0}$, one needs only check that $\eta(\theta)\geq0$ and $\chi(\theta)\geq\eta(\theta)\cot(\varphi_0)$ for $0\leq\theta\leq\theta_0$. The first of these is immediate since the coefficient $\frac{\sin(\varphi_0)}{\sin(\alpha\theta_0)}\rho_1\geq0$ and $\sin(\alpha\theta)\geq0$ for $0\leq\theta\leq\theta_0$. The second condition is also straightforward.
    \begin{align*}
        \chi(\theta)-\eta(\theta)\cot(\varphi_0) & = \rho_0\cos(\alpha\theta) + \frac{\rho_1\cos(\varphi_0)-\rho_0\cos(\alpha\theta_0)}{\sin(\alpha\theta_0)}\sin(\alpha\theta) - \frac{\cos(\varphi_0)}{\sin(\alpha\theta_0)}\rho_1\sin(\alpha\theta)\\
         & = \rho_0\Big(\cos(\alpha\theta) - \cot(\alpha\theta_0)\sin(\alpha\theta)\Big)
    \end{align*}
    This expression only vanishes when $\cot(\alpha\theta)=\cot(\alpha\theta_0)$, which only happens when $\theta=\theta_0$ since $\cot(\alpha\theta)$ is decreasing on the interval $0<\theta<\theta_0$.
\end{proof}

\subsection{Balancing conditions}

We will mainly be interested in homogeneous maps in three situations.
\begin{enumerate}
    \item $f:C(\Gamma,\theta)\to\R^n$ from simplicial cones into Euclidean spaces,
    \item $f:C(C_n,\theta)\to C(k)$ from smooth cones into $k$-pods, and
    \item $f:C(\Gamma,\theta)\to C(\Gamma,\varphi)$ between 2-dimensional cones with the same simplicial structure.
\end{enumerate}
In addition to studying maps that are harmonic on the faces of the domain cone, we introduce balancing conditions along the edges of the domain, akin to those in e.g. \cite{daskal-mese-1,daskal-mese-2}, that have to do with harmonicity along the edges.

In all cases homogeneous maps send the vertex of one cone to the vertex of the other. In the third situation we will also demand that edges and faces of the cone are sent to themselves. We will then add extra conditions to ensure that our homogeneous maps are as smooth as the singularities of the domain and target allow. In particular, for open sets $U,V$ in top-dimensional faces of the domain and target respectively, if $f(U)\subset V$ then $f$ can be represented in coordinates as a smooth map between open subsets of Euclidean spaces. Along edges we introduce the so-called balancing conditions.

\subsubsection{Euclidean targets}

A map $f:C(\Gamma,\theta)\to\R^n$ can be written in coordinates as $f=(f_1,\ldots,f_n)$ for $n$ functions $f_j:C(\gamma,\theta)\to\R$. So we need only consider the case $n=1$, i.e. $f:C(\Gamma,\theta)\to\R$.

In this case the balancing conditions relate the normal derivatives in each face along a common edge of the domain.

\begin{definition}\label{euclidean balancing}
    Let $f:C(\Gamma,\theta)\to\R$ be a homogeneous function. Fix a vertex $v\in V(\Gamma)$ and let $\{e_j\}$ enumerate the edges of $\Gamma$ incident to $v$. In the corresponding faces take coordinates $\psi_j:C(e_j)\to S_{\theta(e_j)}$ adapted to v in order to represent $f$ in these faces by homogeneous functions
    \[
        u_j = f\circ\psi_j^{-1}:S_{\theta(e_j)}\to\R.
    \]
    The function $f$ is balanced along $C(v)$ if
    \[
        \sum_j \frac{\partial u_j}{\partial\theta}(r,0) = 0.
    \]
    The function $f$ is balanced if it is balanced along each edge of $C(\Gamma,\theta)$.
\end{definition}

For maps $f:C(\Gamma,\theta)\to\R^n$, simply impose the balancing condition for each component of $f$. We can see the regularity of $f$ along an edge $C(v)$ for a vertex of degree 2 by unfolding the incident faces $C(e_1)$ and $C(e_2)$ to form a larger sector of the plane.

\begin{lemma}\label{euclidean smooth}
    Let $f:(\Gamma,\theta)\to\R$ be a balanced homogeneous function and let $v\in V(\Gamma)$ be a vertex of degree 2. Then $f$ is $C^1$ over the edge $C(v)\subset C(\Gamma,\theta)$.
\end{lemma}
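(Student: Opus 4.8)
The plan is to unfold the two faces $C(e_1)$ and $C(e_2)$ incident to $v$ into a single plane sector, show that $f$ agrees with a genuine harmonic function on the unfolded sector, and then quote smoothness of harmonic functions. Concretely, let $\theta_1 = \theta(e_1)$ and $\theta_2 = \theta(e_2)$, and put $\Theta = \theta_1 + \theta_2$. Using coordinates adapted to $v$ on $C(e_1)$ and on $C(e_2)$, glue the sectors $S_{\theta_1}$ and $S_{\theta_2}$ along the rays corresponding to $C(v)$: identify $S_{\theta_1}$ with $\{(r,\theta): 0 \le \theta \le \theta_1\}$ and place $S_{\theta_2}$ as $\{(r,\theta): \theta_1 \le \theta \le \Theta\}$ (reflecting the second face's coordinate so the shared edge sits at $\theta = \theta_1$). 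Call the resulting sector $S_\Theta$. Writing $u_1, u_2$ as in Definition~\ref{euclidean balancing}, define $U: S_\Theta \to \R$ by $U = u_1$ on $0 \le \theta \le \theta_1$ and $U = u_2$ (in the reflected coordinate) on $\theta_1 \le \theta \le \Theta$. Away from the gluing ray, $U$ is harmonic since $u_1, u_2$ are; the content is at $\theta = \theta_1$.

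The key step is to check that $U$ is in fact a single smooth harmonic function across the ray $\theta = \theta_1$, not merely a continuous patching. By Lemma~\ref{triggy}, $u_1(r,\theta) = r^\alpha(a_1\cos\alpha\theta + b_1\sin\alpha\theta)$ for $0 \le \theta \le \theta_1$, and in its own adapted-to-$v$ coordinate $u_2$ has the analogous form; after the reflection that places it on $[\theta_1,\Theta]$, one writes it as $r^\alpha(a_2 \cos\alpha(\theta - \theta_1) + b_2\sin\alpha(\theta - \theta_1))$ with the sign conventions chosen so that $\theta = \theta_1$ corresponds to the point $(v,1)$ for the second face too. Continuity of $f$ along $C(v)$ forces the $\theta = \theta_1$ values to agree: $a_1 \cos\alpha\theta_1 + b_1\sin\alpha\theta_1 = a_2$. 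The balancing condition $\frac{\partial u_1}{\partial\theta}(r,0) + \frac{\partial u_2}{\partial\theta}(r,0) = 0$ — where each $\theta$-derivative is the derivative in the outward-from-$v$ direction into that face — becomes, after accounting for the reflection, exactly the statement that the one-sided $\theta$-derivatives of $U$ at $\theta_1$ from the two sides match: $-\alpha a_1 \sin\alpha\theta_1 + \alpha b_1 \cos\alpha\theta_1 = \alpha b_2$. These are precisely the two matching conditions that make the coefficient pair $(a_2, b_2)$ the analytic continuation of $(a_1,b_1)$; i.e. $U(r,\theta) = r^\alpha(a_1\cos\alpha\theta + b_1\sin\alpha\theta)$ on all of $S_\Theta$, a single harmonic function given by one trigonometric formula.

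Having identified $U$ with a genuine harmonic (indeed real-analytic) function on the open sector $S_\Theta$, in particular $U$ is $C^\infty$ on a neighborhood of the interior ray $\{\theta = \theta_1, r > 0\}$, which is the image of $C(v)\setminus\{\text{vertex}\}$ under the unfolding isometry. Since the unfolding is an isometry of a neighborhood of $C(v)$ (minus the cone vertex) onto a neighborhood of that ray, $f$ is $C^1$ (in fact $C^\infty$) across $C(v)$ away from the vertex; along the edge including the vertex the statement is about $C^1$ regularity over $C(v)$, which $U$ clearly has since a harmonic function on a sector is continuously differentiable up to an interior ray and the homogeneous expansion $r^\alpha(\cdots)$ with $\alpha > 0$ is at least $C^0$ at $r = 0$ and $C^1$ along the ray direction. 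I expect the only real obstacle to be bookkeeping: getting the reflection and the sign conventions in the two adapted coordinate systems consistent so that the balancing identity genuinely reads as "equal outward $\theta$-derivatives," rather than off by a sign; once that is set up correctly, the rest is the elementary observation that two linear matching conditions pin down a two-parameter family of solutions of $\rho'' + \alpha^2\rho = 0$.
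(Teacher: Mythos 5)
Your unfolding of the two incident faces into a single plane sector is exactly the paper's construction (the paper parks the second face on $[-\theta(e_2),0]$ rather than on $[\theta_1,\Theta]$, which is only a rotation of the same picture), and your sign bookkeeping checks out: the two matching conditions you derive say precisely that $(a_2,b_2)$ is obtained from $(a_1,b_1)$ by the rotation through $\alpha\theta_1$, so the glued function is given by one trigonometric formula on all of $S_\Theta$ and is real-analytic across the interior ray. For harmonic $f$ this is correct and in fact stronger than the stated conclusion.

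The one genuine discrepancy is that your argument invokes Lemma~\ref{triggy} and therefore assumes $f$ is \emph{harmonic} on each face, whereas the lemma (and the paper's proof) assumes only that $f$ is balanced and homogeneous. Without harmonicity the angular part $\rho_j$ satisfies no ODE, so there is nothing to analytically continue and your key step is unavailable. The repair is to check one derivative at a time, which is all the paper does: writing $u_j(r,\theta)=r^\alpha\rho_j(\theta)$, continuity of $f$ along $C(v)$ gives $\rho_1(0)=\rho_2(0)$, hence the radial derivatives $\alpha r^{\alpha-1}\rho_j(0)$ agree, and the balancing condition $\rho_1'(0)+\rho_2'(0)=0$ is exactly the statement that the two one-sided angular derivatives of the unfolded function coincide, the reflection supplying the sign that converts the sum into an equality. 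That is all $C^1$ requires. Your stronger conclusion in the harmonic case --- that the unfolded function is a single harmonic function rather than merely $C^1$ --- is still worth noting, since it is what Remark~\ref{rem subdivide} implicitly uses when un-subdividing edges.
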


\begin{proof}
    Let $e_1,e_2\in E(\Gamma)$ be the two edges incident to $v$. In the corresponding faces take coordinates $\psi_j:C(e_j)\to S_{\theta(e_j)}$ adapted to $v$ and represent $f$ in these coordinates by homogeneous functionss
    \[
        u_j = f\circ\psi_j^{-1}:S_{\theta(e_j)}\to\R.
    \]
    
    Now define a function $u:S=\{(r,\theta)\in\R^2\vert r\geq0,-\theta(e_2)\leq\theta\leq\theta(e_1)\}\to\R$ as follows.
    \[
        u(r,\theta) = \begin{cases}
            u_1(r,\theta), & \theta\geq0\\
            u_2(r,-\theta), & \theta\leq0.
        \end{cases}
    \]
    The maps $u_1$ and $u_2$ agree on the $x$-axis $\{(r,0)\vert r\geq0\}$. Thus $u$ is continuous, and moreover $\frac{\partial u_1}{\partial r}(r,0)=\frac{\partial u_2}{\partial r}(r,0)$ so $\frac{\partial u}{\partial r}$ is continuous. The $\frac{\partial}{\partial\theta}$ derivatives of $u$ are given by
    \[
        \frac{\partial u}{\partial\theta}(r,\theta) = \begin{cases}
            \frac{\partial u_1}{\partial\theta}(r,\theta), & \theta\geq0\\
            -\frac{\partial u}{\partial\theta}(r,-\theta), & \theta\leq0.
        \end{cases}
    \]
    At $\theta=0$ the balancing condition says that these partial derivatives coincide, so $\frac{\partial u}{\partial\theta}$ is continuous too.
\end{proof}

\subsubsection{1-dimensional targets}\label{S 1-d balancing}

Note that for a homogeneous map $f=(y(x),s(x)t^\alpha):C(C_n,\theta)\to C(k)$ into a $k$-pod, in order to be continuous the map $y$ must be locally constant where $s\neq0$. We will also assume that if $s(v)=0$ at some vertex $v$ with incident edges $e_1$ and $e_2$, then $y(e_1)\neq y(e_2)$.

Let $f=(y(x),s(x)t^\alpha):C(C_n,\theta)\to C(k)$ be a homogeneous map from a smooth cone to a $k$-pod. If $s(x)=0$ for some point $x$ in the interior of an edge $e\in E(\Gamma)$, subdivide $\Gamma$ by introducing a vertex at $x$. After performing all such subdivisions, we'll abuse notation and call the resulting space $C(C_n,\theta)$ too. Now $s(x)$ vanishes only at the vertices of $\Gamma$, and $f$ maps each face $C(e)$ to a single edge of $C(k)$.

Now for a homogeneous map $f:C(C_n,\theta)\to C(k)$ we can define the balancing condition in terms of normal derivatives along each edge of the domain.

\begin{definition}\label{1-d balancing}
    Let $f:C(C_n,\theta)\to C(k)$ be a homogeneous map. Fix a vertex $v\in V(C_n)$ and let $e_1,e_2\in E(C_n)$ be the two edges incident to $v$. In the corresponding faces take coordinates $\psi_j:C(e_j)\to S_{\theta(e_j)}$ adapted to $v$, in order to represent $f$ in these faces by homogeneous maps
    \[
        u_j = f\circ\psi_j^{-1}:S_{\theta(e_j)}\to\hl.
    \]

    If $f(v,1)$ is the vertex of $C(k)$ then $f$ is balanced along $C(v)$ if
    \[
        \frac{\partial u_1}{\partial\theta}(r,0) = \frac{\partial u_2}{\partial \theta}(r,0).
    \]
    If $f(v,1)$ lies in the interior of some edge of $C(k)$ then $f$ is balanced along $C(v)$ if
    \[
        \frac{\partial u_1}{\partial\theta}(r,0) + \frac{\partial u_2}{\partial\theta}(r,0) = 0.
    \]
    The map $f$ is balanced if it is balanced along each edge of $C(C_n,\theta)$.
\end{definition}

In this case we can again see the regularity of $f$ along each edge $C(v)\subset C(C_n,\theta)$. In addition to unfolding the two faces incident to $C(v)$ into a larger sector, we may have to unfold their image edges in $C(k)$ to form a whole line if $f(v)$ is the vertex of $C(k)$.

\begin{lemma}\label{k-pod smooth}
    Let $f=(y(x),s(x)t^\alpha):(C_n,\theta)\to C9k)$ be a balanced homogeneous map and $v\in V(\Gamma)$. Then $f$ is $C^1$ over the edge $C(v)\subset C(C_n,\theta)$.
\end{lemma}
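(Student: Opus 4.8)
The plan is to mimic the proof of Lemma~\ref{euclidean smooth}, but with an extra unfolding step on the target side to handle the case where $f(v,1)$ is the vertex of $C(k)$. Let $e_1,e_2\in E(C_n)$ be the two edges incident to $v$, and take coordinates $\psi_j:C(e_j)\to S_{\theta(e_j)}$ adapted to $v$, so that $f$ is represented in these faces by homogeneous maps $u_j:S_{\theta(e_j)}\to\hl$. As in Lemma~\ref{euclidean smooth}, glue the domain sectors along their common edge $\{\theta=0\}$ to form the larger sector $S=\{(r,\theta)\mid r\geq0,\ -\theta(e_2)\leq\theta\leq\theta(e_1)\}$. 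The issue is that $u_1$ and $u_2$ take values in the half-line $\hl$, and we want to assemble them into a single $\R$-valued (or $\R^2$-valued, if we want to realize them as landing in a plane) harmonic function on $S$.

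First I would dispose of the easy case. If $f(v,1)$ lies in the interior of an edge of $C(k)$, then by our standing assumption $y(e_1)=y(e_2)$, so both faces map into the \emph{same} edge of $C(k)$, which we identify isometrically with $\hl\subset\R$; here $s$ does not vanish at $v$ and $\rho_j(0):=|u_j(1,0)|>0$. We set $u(r,\theta)=u_1(r,\theta)$ for $\theta\geq0$ and $u(r,\theta)=u_2(r,-\theta)$ for $\theta\leq0$. Since $u_1(r,0)=u_2(r,0)=s(v)r^\alpha$ the two pieces agree along $\{\theta=0\}$, so $u$ is continuous and $\partial u/\partial r$ is continuous there; the $\theta$-derivative from above is $\partial u_1/\partial\theta(r,0)$ and from below is $-\partial u_2/\partial\theta(r,0)$, and the balancing condition in this case is exactly $\partial u_1/\partial\theta(r,0)+\partial u_2/\partial\theta(r,0)=0$, so $\partial u/\partial\theta$ is continuous as well. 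Since $u$ is harmonic on the open upper and lower pieces of $S$ and $C^1$ across $\{\theta=0\}$, elliptic regularity (or the Schwarz reflection flavor of the argument, or simply noting $u$ is then harmonic on the interior of $S$ by Morera-type reasoning on $\Delta u$) gives that $u$ is smooth across $C(v)$, hence $f$ is $C^1$ over $C(v)$.

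The remaining case is $f(v,1)$ equal to the vertex of $C(k)$: then $s(v)=0$, and by assumption $y(e_1)\neq y(e_2)$, so the two faces map into two \emph{distinct} edges of $C(k)$ emanating from its vertex. Here the unfolding must happen on the target as well: identify the first edge of $C(k)$ with $\hl\subset\R$ and the second edge with $(-\hl)\subset\R$, so that together they form a line through $0$. Then define $u(r,\theta)=u_1(r,\theta)$ for $\theta\geq0$ and $u(r,\theta)=-u_2(r,-\theta)$ for $\theta\leq0$. Along $\{\theta=0\}$ both $u_1(r,0)$ and $u_2(r,0)$ equal $s(v)r^\alpha=0$, so the pieces agree and $u$ is continuous with continuous $r$-derivative. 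The upper $\theta$-derivative is $\partial u_1/\partial\theta(r,0)$; the lower one is $-\,\partial(-u_2)/\partial(-\theta)\big|_{\theta=0}$, which works out to $\partial u_2/\partial\theta(r,0)$ after tracking the two sign flips (one from negating the target coordinate, one from the reflection $\theta\mapsto-\theta$). So continuity of $\partial u/\partial\theta$ across $\{\theta=0\}$ is precisely the condition $\partial u_1/\partial\theta(r,0)=\partial u_2/\partial\theta(r,0)$, which is the balancing condition in this case. As before, $u$ is then a $C^1$ function on $S$, harmonic off $\{\theta=0\}$, hence harmonic and smooth across $C(v)$; transporting back through the isometric identifications shows $f$ is $C^1$ over $C(v)\subset C(C_n,\theta)$.

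The main obstacle — really the only subtlety — is getting the sign bookkeeping right in the last case so that the balancing condition of Definition~\ref{1-d balancing} matches exactly the $C^1$-matching condition for the unfolded map: one must be careful that the reflection $\theta\mapsto-\theta$ and the target identification of the second edge with $-\hl$ each contribute a sign to the outward normal derivative, and that these combine to turn ``$\partial u_1/\partial\theta + \partial u_2/\partial\theta = 0$'' (which would be wrong here) into ``$\partial u_1/\partial\theta = \partial u_2/\partial\theta$'' (which is the stated condition). One should also remark that after the subdivision described before Definition~\ref{1-d balancing}, $s$ vanishes only at vertices, so these two cases are exhaustive; and that the promotion from ``$C^1$ across the edge and harmonic on either side'' to ``smooth across the edge'' is standard (it also follows from Lemma~\ref{triggy} applied to the assembled homogeneous harmonic function $u$ on $S$, whose angular profile is then a single $c_1\cos(\alpha\theta)+c_2\sin(\alpha\theta)$ across $\theta=0$).
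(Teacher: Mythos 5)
Your proof is correct and follows essentially the same route as the paper: split into the two cases according to whether $f(v,1)$ is interior to an edge of $C(k)$ or the vertex, and in the latter case unfold the two target edges into a line via $u(r,\theta)=u_1(r,\theta)$ for $\theta\geq0$ and $-u_2(r,-\theta)$ for $\theta\leq0$, so that the balancing condition $\partial u_1/\partial\theta(r,0)=\partial u_2/\partial\theta(r,0)$ becomes exactly the $C^1$-matching condition. The only cosmetic difference is that the paper handles the first case by directly citing Lemma~\ref{euclidean smooth} rather than re-deriving it, and does not bother with the (correct but unneeded) upgrade from $C^1$ to smooth.
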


\begin{proof}
    Let $e_1,e_2\in E(\Gamma)$ be the two edges incident to $v$. In the corresponding faces take coordinates $\psi_j:C(e_j)\to S_{\theta(e_j)}$ adapted to $v$.

    Consider first the case where $s(v)\neq0$, so $f(v,1)$ is in the interior of an edge of $C(k)$. The map $f$ is represented in coordinates in the faces $C(e_1)$ and $C(e_2)$ by homogeneous maps into the same half line $\hl\subset\R$. Thus Lemma~\ref{euclidean smooth} already establishes the $C^1$ regularity of $f$ over the edge $C(v)$.
    
    Now consider $s(v)=0$, so $f(v,1)$ is the vertex of $C(k)$. Again we can represent the map $f$ in the face $C(e_j)$ by homogeneous maps
    \[
        u_j = f\circ\psi_j^{-1}:S_{\theta(e_j)}\to\hl.
    \]
    Since $y(e_1)\neq y(e_2)$, the target half-lines should be thought of as different spaces, and can thus be glued together to form the entire real line. Specifically, define $u:S=\{(r,\theta)\in\R^2\vert r\geq0,-\theta(e_2)\leq\theta\leq\theta(e_1)\}\to\R$ as follows.
    \[
        u(r,\theta) = \begin{cases}
            u_1(r,\theta), & \theta\geq0\\
            -u_2(r,-\theta), & \theta\leq0.
        \end{cases}
    \]

    Both $u_1$ and $u_2$ map $(r,0)$ to $0$, so $u$ and $\frac{\partial u}{\partial r}$ are continuous over the $x$-axis. The $\frac{\partial}{\partial\theta}$ derivatives are given by
    \[
        \frac{\partial u}{\partial\theta}(r,\theta) = \begin{cases}
            \frac{\partial u_1}{\partial\theta}(r,\theta), & \theta\geq0\\
            \frac{\partial u_2}{\partial\theta}(r,-\theta), & \theta\leq 0.
        \end{cases}
    \]
    Again the balancing condition says that these derivatives coincide at $(r,0)$, so $\frac{\partial u}{\partial\theta}$ is continuous.
\end{proof}

\subsubsection{2-dimensional targets}

Let $f=(y(x),s(x)t^\alpha):C(\Gamma_1,\theta_1)\to C(\Gamma_2,\theta_2)$ be a homogeneous map. If $s(x)\in V(\Gamma_2)$ for some point $x$ in the interior of an edge $e\in E(\Gamma_1)$, subdivide $\Gamma_1$ by introducing a vertex at $x$. Likewise subdivide $\Gamma_2$ at any point $s(v)$ for $v\in V(\Gamma_1)$. After performing all such subdivisions, the original map factors as a composition $C(\Gamma_1,\theta_1)\to C(\Gamma_1,\varphi)\to C(\Gamma_2,\theta_2)$ where the first map sends each edge and face of $C(\Gamma_1)$ to itself and the second map sends each face of $C(\Gamma_1,\varphi)$ isometrically to a face of $C(\Gamma_2,\theta_2)$.

As the harmonicity of a map is unaffected by postcomposition with isometries, we may consider only maps $f(x,t)=(y(x),s(x)t^\alpha):C(\Gamma,\theta)\to C(\Gamma,\varphi$ such that $y(v)=v$ for all $v\in V(\Gamma)$ and $y\vert_e$ maps $e$ to $e$ for each $e\in E(\Gamma)$. That is, $f$ maps each edge and face of $C(\Gamma)$ to itself.

Now for a homogeneous map $f:C(\Gamma,\theta)\to C(\Gamma,\varphi)$ we can define the balancing condition in terms of normal derivatives along each edge of the domain. But under the additional assumptions made on our maps, only some components of the normal derivatives are relevant.

\begin{definition}\label{2-d balancing}
    Let $f:C(\Gamma,\theta)\to C(\Gamma,\varphi)$ be a homogeneous map. Fix a vertex $v\in V(\Gamma_1)$ and let $\{e_j\}$ enumerate the edges of $\Gamma$ incident to $v$. In each face $C(e_j)$ of $C(\Gamma,\theta)$ take coordinates adapted to $v$, and likewise in the faces of $C(\Gamma,\varphi)$. In this way, represent $f$ in each face by a homogeneous map
    \[
        u_j:S_j=S_{\theta(e_j)}\to S_{\varphi(e_j)}=S'_j.
    \]
    Taking rectangular coordinates on the target $S'_j$, write $u_j = (x_j,y_j)$. Then the map $f$ is balanced along $C(v)$ if
    \[
        \sum_j\frac{\partial x_j}{\partial\theta}(r,0) = 0.
    \]
    The map $f$ is balanced if it is balanced along each edge of $C(\Gamma,\theta)$.
\end{definition}

From the maps $x_j$ in this definition one can define a map $x$ as follows. Suppose there are $N$ edges $e_j$ incident to $v$, and fix some index $1\leq j_0\leq N$. Define
\[
    x(r,\theta) = \begin{cases}
        x_{j_0}(r,\theta), & \theta\geq0\\
        -x_{j_0}(r,-\theta)+\frac{2}{N}\sum_jx_j(r,-\theta), & \theta\leq0.
    \end{cases}
\]
The fact that all the functions $x_j$ agree along $\theta=0$ means that $x$ and $\frac{\partial x}{\partial r}$ are continuous, and the balancing condition now means that $\frac{\partial x}{\partial\theta}$ is continuous.

\begin{remark}
    In the case that $v$ has degree 2, the function $x$ constructed just above is the direct analogue of the function $u$ from Lemma~\ref{euclidean smooth}. Unfortunately there is not an analogous smoothness result for a similarly constructed $y$ function. The restriction that $f$ sends $C(v)$ to itself for all $v\in V(\Gamma)$ means that we can not expect such a smoothness result.
\end{remark}

\section{Balanced harmonic functions into Euclidean spaces}\label{S euclidean target}

A function $f:C(\Gamma,\theta)\to\R$ is harmonic if its restriction to each face of $C(\Gamma,\theta)$ is a harmonic function on a plane sector. In this section we investigate the balanced (as in Definition~\ref{euclidean balancing}) homogeneous harmonic functions $f:C(\Gamma,\theta)\to\R$. The analysis will be different if $\alpha\theta(e)\in\pi\mathbb{Z}$ for some edge $e\in E(\Gamma)$; we will refer to such $\alpha$ as \emph{singular degrees}.

We will focus first on the case of non-singular $\alpha$. In each case we begin by constructing homogeneous harmonic functions $f:C(\Gamma,\theta)\to\R$, and then investigating which of those functions are balanced.

\subsection{Non-singular degrees}

\begin{lemma}\label{rho determines f}
    If $\alpha$ is a non-singular degree then a homogeneous harmonic function $f:C(\Gamma,\theta)\to\R$ of degree $\alpha>0$ is uniquely determined by a function $\rho:V(\Gamma)\to\R$.
\end{lemma}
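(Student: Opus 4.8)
The plan is to exploit Lemma~\ref{BVP}, which says that for non-singular $\alpha$ a homogeneous harmonic function on a single sector $S_{\theta(e)}$ is determined by its boundary values $\rho(0)$ and $\rho(\theta(e))$ at the two edges of the sector. Given a function $\rho:V(\Gamma)\to\R$, I would define, for each edge $e=(p,q)\in E(\Gamma)$, the unique homogeneous harmonic function $u_e:S_{\theta(e)}\to\R$ of degree $\alpha$ with boundary values $\rho(p)$ at the side adapted to $p$ and $\rho(q)$ at the side adapted to $q$; this is well-defined precisely because $\alpha$ is non-singular, so $\sin(\alpha\theta(e))\neq0$ and Lemma~\ref{BVP} applies. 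The key compatibility point is that on the edge $C(p)$, every face $C(e)$ incident to $p$ has its $u_e$ taking the common boundary value $\rho(p)$ (as a function of $r$, namely $r^\alpha\rho(p)$), so the functions $u_e$ glue along the edges of $C(\Gamma)$ to a single continuous homogeneous function $f:C(\Gamma,\theta)\to\R$. Restricted to each face this $f$ is harmonic by construction, so $f$ is a homogeneous harmonic function.

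For uniqueness, suppose $f:C(\Gamma,\theta)\to\R$ is homogeneous harmonic of degree $\alpha$. For each vertex $v\in V(\Gamma)$ the value $f(v,1)$ is well-defined (the edge $C(v)$ lies in the closure of every incident face), so set $\rho(v)=f(v,1)$, and by homogeneity $f(v,t)=t^\alpha\rho(v)$. On each face $C(e)$, $f$ restricts to a homogeneous harmonic function on $S_{\theta(e)}$ whose boundary values on the two sides are exactly $r^\alpha\rho(p)$ and $r^\alpha\rho(q)$; since $\alpha$ is non-singular, Lemma~\ref{BVP} forces this restriction to equal the $u_e$ constructed above. Hence $f$ is determined by $\rho$, and conversely every $\rho$ arises, giving the claimed bijection. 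I would also note that nothing here uses the balancing condition — that enters only when one asks \emph{which} $\rho$ give balanced $f$, which is the subject of the next results.

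The only real subtlety — and the step I would treat most carefully — is the gluing/continuity across edges of high degree and the well-definedness of $\rho(v)$: one must check that the boundary trace of the harmonic function $u_e$ on the side of $S_{\theta(e)}$ adapted to $p$ is genuinely the function $r\mapsto r^\alpha\rho(p)$ and agrees with the trace coming from every other face incident to $p$, so that $f$ is unambiguously defined on the one-dimensional edge $C(p)$ shared by all those faces. This is immediate from the explicit formula in Lemma~\ref{BVP} (the coefficient $c_1$ equals $\rho(0)$), but it is worth stating since it is what makes the per-face data assemble into a global function. Everything else is a direct appeal to Lemma~\ref{BVP} applied face by face.
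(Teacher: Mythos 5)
Your proposal is correct and follows essentially the same route as the paper: apply Lemma~\ref{BVP} face by face with boundary data $\rho(v_0),\rho(v_1)$ in coordinates adapted to one endpoint, and observe that the common value $r^\alpha\rho(v)$ along each edge $C(v)$ makes the per-face solutions glue continuously. Your explicit uniqueness direction (setting $\rho(v)=f(v,1)$ and invoking Lemma~\ref{BVP} again) is only implicit in the paper's proof, but it is the same argument.
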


\begin{proof}
    For a fixed edge $e\in E(\Gamma)$ consider the face $C(e)\subset C(\Gamma,\theta)$, which is isometric to $S_{\theta(e)}\subset\R^2$. According to Lemma~\ref{BVP} a homogeneous harmonic function $u:S_{\theta(e)}\to\R$ is uniquely determined by the values $u(1,0)$ and $u(1,\theta_0)$ (using polar coordinates $(r,\theta)$ in $S_{\theta(e)}$.
    
    Fix a function $\rho:V(\Gamma)\to\R$. For each edge $e=(v_0v_1)\in E(\Gamma)$, fix coordinates in the face $C(e)$ adapted to $v_0$, so that $C(e)$ is identified with $S_{\theta(e)}$ and the edge $C(v)$ is identified with the positive $x$-axis. Define $f$ in $C(e)$ to be represented in these coordinates by the unique homogeneous harmonic function $u:S_{\theta(e)}\to\R$ with $u(1,0)=\rho(v_0)$ and $u(1,\theta(e))=\rho(v_1)$.
    
    Along each edge $C(v)$, $f(v,t)=\rho(v)t^\alpha$ is defined independently of the choice of incident face. Hence $f$ is continuous over the edges of $C(\Gamma,\theta)$ and harmonic in each face.
\end{proof}

For ease of notation, we will enumerate the vertices of $\Gamma$ by $\{v_i\}$. Then a function $\rho:V(\Gamma)\to\R$ can be encoded by the vector $(\rho_i=\rho(v_i))$. If $v_iv_j\in E(\Gamma)$ is an edge, we will also denote by $\theta_{ij}=\theta(v_iv_j)$.

\begin{theorem}\label{euclidean balanced}
    The homogeneous harmonic function $f:C(\Gamma,\theta)\to\R$ of non-singular degree $\alpha>0$ determined by $\rho:V(\Gamma)\to\R$ is balanced if and only if for each vertex $v_i$,
    \[
        \sum_{v_j\sim v_i} \frac{\rho_j-\cos(\alpha\theta_{ij})\rho_i}{\sin(\alpha\theta_{ij})}=0.
    \]
\end{theorem}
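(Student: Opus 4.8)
The plan is to unwind the definition of the balancing condition from Definition~\ref{euclidean balancing} using the explicit formula for a homogeneous harmonic function on a sector supplied by Lemma~\ref{BVP}. Fix a vertex $v_i$ and enumerate the edges $e_j = v_iv_{k(j)}$ incident to it. In the face $C(e_j)$, take coordinates $\psi_j$ adapted to $v_i$, so that $C(e_j)$ is identified with $S_{\theta_{ik(j)}}$ with the edge $C(v_i)$ sitting along $\theta=0$ and the other edge $C(v_{k(j)})$ along $\theta=\theta_{ik(j)}$. Then by the construction in Lemma~\ref{rho determines f} the map $f$ is represented in this face by $u_j(r,\theta) = r^\alpha\rho_j(\theta)$ where
\[
    \rho_j(\theta) = \rho_i\cos(\alpha\theta) + \frac{\rho_{k(j)} - \rho_i\cos(\alpha\theta_{ik(j)})}{\sin(\alpha\theta_{ik(j)})}\sin(\alpha\theta),
\]
valid since $\alpha$ is non-singular so $\sin(\alpha\theta_{ik(j)})\neq0$.

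Next I would compute the relevant normal derivative. Differentiating in $\theta$ and evaluating at $\theta=0$ gives
\[
    \frac{\partial u_j}{\partial\theta}(r,0) = r^\alpha\rho_j'(0) = r^\alpha\,\alpha\,\frac{\rho_{k(j)} - \rho_i\cos(\alpha\theta_{ik(j)})}{\sin(\alpha\theta_{ik(j)})},
\]
because $\frac{d}{d\theta}\cos(\alpha\theta)|_{0}=0$ and $\frac{d}{d\theta}\sin(\alpha\theta)|_{0}=\alpha$. Summing over the edges $e_j$ incident to $v_i$ and invoking Definition~\ref{euclidean balancing}, the balancing condition along $C(v_i)$ reads $\sum_j r^\alpha\alpha\frac{\rho_{k(j)}-\rho_i\cos(\alpha\theta_{ik(j)})}{\sin(\alpha\theta_{ik(j)})}=0$ for all $r>0$. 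Dividing through by the nonzero factor $r^\alpha\alpha$ and rewriting the sum over incident edges as a sum over neighbors $v_j\sim v_i$ (with $\theta_{ij}=\theta(v_iv_j)$) yields exactly
\[
    \sum_{v_j\sim v_i}\frac{\rho_j - \cos(\alpha\theta_{ij})\rho_i}{\sin(\alpha\theta_{ij})} = 0.
\]
Since $f$ is balanced precisely when it is balanced along every edge $C(v_i)$, doing this at each vertex establishes the equivalence in both directions.

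There is essentially no hard analytic obstacle here: the content is entirely in correctly matching the coordinate conventions so that the "adapted to $v_i$" coordinates put $v_i$ at angle $0$ in every incident face, which is what makes the $\cos$ term contribute nothing to the normal derivative and isolates the $\sin$ coefficient. The one point requiring a little care is the bookkeeping when the graph $\Gamma$ has multi-edges or a loop at $v_i$, or more mundanely when a vertex is incident to the same neighbor twice; in that case "$\sum_{v_j\sim v_i}$" should be read as a sum over incident edge-ends rather than over distinct neighbors, and one should remark that the formula is stated under the convention that $\Gamma$ is simplicial so this subtlety does not arise. I would also note explicitly that the identity must hold as an identity in $r$, which it automatically does since every term carries the common factor $r^\alpha$, so no information is lost in passing to the discrete equation.
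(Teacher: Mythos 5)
Your proof is correct and follows essentially the same route as the paper: represent $f$ in each incident face in coordinates adapted to $v_i$ via Lemma~\ref{BVP}, differentiate in $\theta$ at $\theta=0$ to isolate the coefficient of $\sin(\alpha\theta)$, and divide out the common factor $\alpha r^\alpha$. The extra remarks on coordinate conventions and the simplicial hypothesis are sensible but not needed beyond what the paper already assumes.
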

Here $v\sim w$ means that vertices $v$ and $w$ are adjacent, so there is an edge $vw\in E(\Gamma)$.

\begin{proof}
    In each face $C(v_iv_j)$ choose coordinates adapted to $v_i$, so that the face is identified with $S_{\theta_{ij}}$ and $C(v_i)$ is identified with the positive $x$-axis. According to Lemma~\ref{BVP}, in these coordinates $f\vert_{C(v_iv_j)}$ is represented by
    \[
        u_j(r,\theta) = r^\alpha\left(\rho_i\cos(\alpha\theta) + \frac{\rho_j-\cos(\alpha\theta_{ij})\rho_i}{\sin(\alpha\theta_{ij})}\sin(\alpha\theta)\right).
    \]
    And the balancing formula of Definition~\ref{euclidean balancing} says that $f$ is balanced along $C(v_i)$ if
    \[
        \sum_{v_j\sim v_i} \frac{\partial u_j}{\partial\theta}(r,0) = 0.
    \]
    
    Combining these two formulas, we see that $f$ is balanced along $C(v_i)$ if
    \[
        \alpha r^\alpha\sum_{v_j\sim v_i} \frac{\rho_j-\cos(\alpha\theta_{ij})\rho_i}{\sin(\alpha\theta_{ij}))} = 0.
    \]
    In order to be balanced, $f$ must be balanced along each edge of $C(\Gamma,\theta)$, so every sum of this form must vanish.
\end{proof}

Another way to interpret this result is to say a vector $(\rho_j)$ determines a balanced homogeneous harmonic function $f:C(\Gamma,\theta)\to\R$ of non-singular degree $\alpha>0$ if it is in the kernel of the matrix $\Delta_{\alpha\theta}$ with entries
\[
    (\Delta_{\alpha\theta})_{ij} = \begin{cases}
        -\sum_{v_k\sim v_i}\cot(\alpha\theta_{ik}), & i=j\\
        \csc(\alpha\theta_{ij}), & v_i\sim v_j\\
        0 & else.
    \end{cases}
\]
For only certain values of $\alpha$ will $\Delta_{\alpha\theta}$ have a non-trivial kernel. That is, only certain degrees $\alpha$ admit a balanced homogeneous harmonic function.

\begin{remark}
    In general one can find a balanced homogeneous harmonic map $f:C(\Gamma,\theta)\to\R^d$ for any $d$ ($f$ may be the 0 map). Both the harmonicity and the balancing condition for $f$ split into the corresponding conditions for the components $f_j$ of $f=(f_1,\ldots,f_d)$. By setting the components $\{f_j\}$ to be a basis of the $d$-dimensional kernel of $\Delta_{\alpha\theta}$ one finds the most geometrically interesting such maps. If one tries to map into a larger dimensional space, or more generally lets the components of $f$ be dependent, then the image of $f$ lies in a subspace, not taking advantage of the full dimension of the target.
\end{remark}

In special cases the collection of degrees $\alpha$ that admit balanced homogeneous harmonic functions is related to other discrete Laplace operators studied extensively in the literature. The following result is a direct generalization of Proposition 13 from \cite{daskal-mese-2}.

\begin{proposition}\label{constant theta}
    Suppose $\theta(e)=\theta_0$ for all $e\in E(\Gamma)$. If $\alpha\theta_0\not\in\pi\mathbb{Z}$ then there exist balanced harmonic functions of degree $\alpha$ if and only if $1-\cos(\alpha\theta_0)$ is an eigenvalue of the normalized graph Laplacian $\mathcal{L}$ on $\Gamma$.
\end{proposition}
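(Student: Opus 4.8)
The plan is to specialize the matrix $\Delta_{\alpha\theta}$ from Theorem~\ref{euclidean balanced} to the case of constant angle $\theta(e)=\theta_0$ and recognize the result as a scalar multiple of the weighted normalized Laplacian $\mathcal{L}$. When every edge has angle $\theta_0$, the entries of $\Delta_{\alpha\theta}$ become $(\Delta_{\alpha\theta})_{ii} = -\deg(v_i)\cot(\alpha\theta_0)$, $(\Delta_{\alpha\theta})_{ij} = \csc(\alpha\theta_0)$ when $v_i\sim v_j$, and $0$ otherwise. Factoring out $\csc(\alpha\theta_0)$ (which is nonzero since $\alpha\theta_0\notin\pi\mathbb{Z}$), we get $\Delta_{\alpha\theta} = \csc(\alpha\theta_0)\big(A - \cos(\alpha\theta_0)D\big)$, where $A$ is the unweighted adjacency matrix and $D$ is the diagonal degree matrix of $\Gamma$. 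Hence the kernel of $\Delta_{\alpha\theta}$ coincides with the kernel of $A - \cos(\alpha\theta_0)D$, so balanced harmonic functions of degree $\alpha$ exist precisely when $\det\big(A - \cos(\alpha\theta_0)D\big)=0$.

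Next I would convert the condition $\det(A - \cos(\alpha\theta_0)D) = 0$ into a statement about the spectrum of $\mathcal{L} = Id - D^{-1/2}AD^{-1/2}$. Since $\Gamma$ is connected and finite, every vertex has positive degree, so $D$ is invertible and $D^{1/2}$ is well-defined. Writing $c = \cos(\alpha\theta_0)$, one has
\[
    A - cD = D^{1/2}\big(D^{-1/2}AD^{-1/2} - c\,Id\big)D^{1/2} = D^{1/2}\big((1-c)Id - \mathcal{L}\big)D^{1/2}.
\]
Because $D^{1/2}$ is invertible, $\det(A-cD)=0$ if and only if $\det\big((1-c)Id-\mathcal{L}\big)=0$, i.e. if and only if $1-c = 1-\cos(\alpha\theta_0)$ is an eigenvalue of $\mathcal{L}$. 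Combining this with the conclusion of the previous paragraph yields exactly the claimed equivalence.

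There is no serious obstacle here; the proof is essentially bookkeeping once one writes down the constant-$\theta$ specialization of $\Delta_{\alpha\theta}$ and recalls the similarity relation between $\mathcal{L}$ and $D-A$ recorded in the preliminaries. The only points requiring a word of care are: (i) noting explicitly that $\csc(\alpha\theta_0)\ne 0$ under the hypothesis $\alpha\theta_0\notin\pi\mathbb{Z}$, so that dividing by it does not change the kernel; (ii) noting that connectedness of $\Gamma$ guarantees $\deg(v_i)\ge 1$ for all $i$, so that $D^{1/2}$ is invertible; and (iii) being careful that here the relevant weighting is the \emph{unweighted} one, $w(e)=1$ on edges and $w(v)=\deg(v)$ on vertices, which is the normalization under which $\mathcal{L}$ has the standard spectrum discussed in the introduction. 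With those remarks in place the argument is a two-line determinant computation, and I would present it as such.
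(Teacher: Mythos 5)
Your proposal is correct and follows essentially the same route as the paper: both reduce the kernel condition for $\Delta_{\alpha\theta}=\csc(\alpha\theta_0)\big(A-\cos(\alpha\theta_0)D\big)$ to the eigenvalue equation $A\rho=\cos(\alpha\theta_0)D\rho$ and then transfer it to $\mathcal{L}$ via the $D^{1/2}$-similarity recorded in the introduction (the paper phrases this through the asymmetric operator $L=Id-D^{-1}A$, you phrase it as a determinant identity, but the content is identical). Your explicit attention to the invertibility of $D$ and the nonvanishing of $\csc(\alpha\theta_0)$ is a welcome bit of extra care.
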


\begin{remark}
    This Proposition ignores the eigenvalue $\lambda=0$ that $\mathcal{L}$ necessarily has, and the eigenvalue $\lambda=2$ that $\mathcal{L}$ may have. These cases correspond to $\alpha\theta_0 = 2n\pi$ and $\alpha\theta_0=(2n+1)\pi$, respectively, and will be discussed in Section~\ref{S euclidean singular}.
\end{remark}

\begin{proof}
    Suppose $\theta(e)=\theta_0$ for all $e$ and that $(\rho_i)$ is in the kernel of $\Delta_{\alpha\theta}$. For a fixed vertex $v_i\in V(\Gamma)$ with $deg(v_i)$ neighbors,
    \begin{align*}
        0 & = \sum_{v_j\sim v_i}\Big(\csc(\alpha\theta_0)\rho_j-\cot(\alpha\theta_0)\rho_i\Big)\\
         & = \csc(\alpha\theta_0)\sum_{v_j\sim v_i}\Big(\rho_j-\rho_i\Big) + deg(v_i)\Big(\csc(\alpha\theta_0)-\cot(\alpha\theta_0)\Big)\rho_i.\\
        \Big(1-\cos(\alpha\theta_0)\Big)\rho_i & = \frac{1}{deg(v_i)}\sum_{v_j\sim v_i}\Big(\rho_i-\rho_j\Big).
    \end{align*}
    
    Thus $1-\cos(\alpha\theta_0)$ is an eigenvalue of the vertex-weighted Laplace matrix $L$ with entries
    \[
        L_{ij} = \begin{cases}
            1, & i=j\\
            \frac{-1}{deg(v_i)}, & v_i\sim v_j\\
            0, & v_i\not\sim v_j.
        \end{cases}
    \]
    Although this matrix $L$ is not symmetric, it is similar to the symmetric normalized Laplace matrix $\mathcal{L}$ (see \cite{chung-langlands} or \cite{banerjee-jost}) with entries
    \[
        \mathcal{L}_{ij} = \begin{cases}
            1, & i=j\\
            \frac{-1}{\sqrt{deg(v_i)deg(v_j)}}, & v_i\sim v_j\\
            0, & v_i\not\sim v_j.
        \end{cases}
    \]
    Since these matrices are similar they have the same eigenvalues. Their spectra have been studied extensively in the literature, for instance in \cite{chung} and \cite{banerjee-jost}.
\end{proof}

For metrics given by general functions $\theta:E(\Gamma)\to(0,\pi)$, the question of which degrees $\alpha$ admit balanced homogeneous harmonic functions is more subtle. We will study the structure of $\Delta_{\alpha\theta}$ via the quadratic form it determines on $\R^{\#V(\Gamma)}$. This quadratic form will give some information about the changing eigenvalues of $\Delta_{\alpha\theta}$. In particular, when an eigenvalue crosses 0 the operator $\Delta_{\alpha\theta}$ will have a non-trivial kernel.

\begin{lemma}\label{increasing}
    For each fixed $\rho=(\rho_i)\not\equiv0$ the quadratic form $\rho\cdot\Delta_{\alpha\theta}\rho$ is strictly increasing in $\alpha$.
\end{lemma}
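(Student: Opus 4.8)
The plan is to compute the quadratic form $\rho\cdot\Delta_{\alpha\theta}\rho$ explicitly from the matrix entries, reduce it to a sum over edges, and then differentiate term by term in $\alpha$. From the definition of $\Delta_{\alpha\theta}$ one has
\[
    \rho\cdot\Delta_{\alpha\theta}\rho = -\sum_i\Big(\sum_{v_k\sim v_i}\cot(\alpha\theta_{ik})\Big)\rho_i^2 + \sum_{v_i\sim v_j}\csc(\alpha\theta_{ij})\rho_i\rho_j.
\]
Grouping the diagonal contributions by edge — each edge $v_iv_j$ contributes $-\cot(\alpha\theta_{ij})\rho_i^2$ from the $i$-th diagonal entry and $-\cot(\alpha\theta_{ij})\rho_j^2$ from the $j$-th — and combining with the off-diagonal terms (each unordered edge appears twice in the double sum $\sum_{v_i\sim v_j}$, contributing $2\csc(\alpha\theta_{ij})\rho_i\rho_j$), I would obtain
\[
    \rho\cdot\Delta_{\alpha\theta}\rho = \sum_{v_iv_j\in E(\Gamma)}\Big(-\cot(\alpha\theta_{ij})(\rho_i^2+\rho_j^2) + 2\csc(\alpha\theta_{ij})\rho_i\rho_j\Big).
\]

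Next I would analyze a single edge term $g(\alpha) = -\cot(\alpha\theta)(\rho_i^2+\rho_j^2) + 2\csc(\alpha\theta)\rho_i\rho_j$, where $\theta = \theta_{ij}\in(0,\pi)$ and $\alpha\theta\notin\pi\mathbb{Z}$. Using $\frac{d}{d\alpha}\cot(\alpha\theta) = -\theta\csc^2(\alpha\theta)$ and $\frac{d}{d\alpha}\csc(\alpha\theta) = -\theta\csc(\alpha\theta)\cot(\alpha\theta)$, the derivative is
\[
    g'(\alpha) = \theta\csc^2(\alpha\theta)(\rho_i^2+\rho_j^2) - 2\theta\csc(\alpha\theta)\cot(\alpha\theta)\rho_i\rho_j = \theta\csc^2(\alpha\theta)\Big(\rho_i^2 - 2\cos(\alpha\theta)\rho_i\rho_j + \rho_j^2\Big).
\]
The quadratic form $\rho_i^2 - 2\cos(\alpha\theta)\rho_i\rho_j + \rho_j^2$ in $(\rho_i,\rho_j)$ has discriminant $4\cos^2(\alpha\theta) - 4 \leq 0$, and it is strictly positive definite precisely when $\cos^2(\alpha\theta) < 1$, i.e. when $\alpha\theta\notin\pi\mathbb{Z}$ — which holds for every edge since $\alpha$ is understood to be a non-singular degree (and in any case the derivative is continuous and the non-singular $\alpha$ are dense). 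Hence each $g'(\alpha) \geq 0$, with strict inequality unless $\rho_i = \rho_j = 0$.

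Summing over all edges, $\frac{d}{d\alpha}\big(\rho\cdot\Delta_{\alpha\theta}\rho\big) = \sum_{v_iv_j\in E(\Gamma)} g'(\alpha) \geq 0$. To upgrade this to strict monotonicity I would use that $\rho\not\equiv 0$ and $\Gamma$ is connected: some vertex $v_i$ has $\rho_i\neq 0$, and since $\Gamma$ is connected $v_i$ has at least one incident edge $v_iv_j$, for which the term $g'(\alpha) = \theta\csc^2(\alpha\theta)(\rho_i^2 - 2\cos(\alpha\theta)\rho_i\rho_j + \rho_j^2) > 0$ because the form is positive definite and $(\rho_i,\rho_j)\neq(0,0)$. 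Therefore $\frac{d}{d\alpha}\big(\rho\cdot\Delta_{\alpha\theta}\rho\big) > 0$ for all non-singular $\alpha$, so the quadratic form is strictly increasing. The only mild subtlety — and the step I'd be most careful about — is the edge-grouping bookkeeping (orientation of edges and the factor-of-two conventions in $\sum_{v_i\sim v_j}$ versus $\sum_{v_iv_j\in E}$), together with noting that the argument is confined to intervals of $\alpha$ avoiding the singular values where $\Delta_{\alpha\theta}$ itself is undefined; "strictly increasing" is meant on each such interval.
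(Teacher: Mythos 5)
Your proof is correct and follows essentially the same route as the paper: rewrite the quadratic form as a sum over edges of $2\csc(\alpha\theta_{ij})\rho_i\rho_j-\cot(\alpha\theta_{ij})(\rho_i^2+\rho_j^2)$ and differentiate term by term to obtain $\theta_{ij}\csc^2(\alpha\theta_{ij})\big(\rho_i^2+\rho_j^2-2\cos(\alpha\theta_{ij})\rho_i\rho_j\big)$. Your endgame --- strict positive definiteness of each edge term for non-singular $\alpha$ combined with connectedness of $\Gamma$ --- is if anything a touch cleaner than the paper's, which reaches the same conclusion by forcing both $(\rho_i-\rho_j)^2=0$ and $\rho_i\rho_j=0$ on every edge.
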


\begin{proof}
    First compute the quadratic form as follows.
    \begin{align*}
        \rho\cdot\Delta_{\alpha\theta}\rho & = \sum_i \rho_i\left(\sum_{v_j\sim v_i}\frac{\rho_j-\cos(\alpha\theta_{ij})\rho_i}{\sin(\alpha\theta_{ij})}\right)\\
         & = \sum_{v_iv_j\in E(\Gamma)}\left(\rho_i\frac{\rho_j-\rho_i\cos(\alpha\theta_{ij})}{\sin(\alpha\theta_{ij})} + \rho_j\frac{\rho_i-\rho_j\cos(\alpha\theta_{ij})}{\sin(\alpha\theta_{ij})}\right)\\
         & = \sum_{v_iv_j\in E(\Gamma)}\frac{2\rho_i\rho_j-\cos(\alpha\theta_{ij})\Big(\rho_i^2+\rho_j^2\Big)}{\sin(\alpha\theta_{ij})}\\
         & = \sum_{v_iv_j\in E(\Gamma)}\Big(2\csc(\alpha\theta_{ij})\rho_i\rho_j - \cot(\alpha\theta_{ij})\big(\rho_i^2+\rho_j^2\big)\Big).
    \end{align*}
    Now the derivative with respect to $\alpha$ is easy to compute.
    \begin{align*}
        \frac{\partial}{\partial\alpha}\Big(\rho\cdot\Delta_{\alpha\theta}\rho\Big) & = \sum_{v_iv_j\in E(\Gamma)}\Big(\theta_{ij}\csc^2(\alpha\theta_{ij})\big(\rho_i^2+\rho_j^2\big) - 2\rho_i\rho_j\theta_{ij}\csc(\alpha\theta_{ij})\cot(\alpha\theta_{ij})\Big)\\
         & = \sum_{v_iv_j\in E(\Gamma)}\theta_{ij}\csc^2(\alpha\theta_{ij})\Big(\rho_i^2+\rho_j^2 - 2\rho_i\rho_j\cos(\alpha\theta_{ij})\Big)\\
         & \geq \sum_{v_iv_j\in E(\Gamma)}\theta_{ij}\csc^2(\alpha\theta_{ij})\Big(\rho_i-\rho_j\Big)^2 \geq 0.
    \end{align*}
    
    Moreover suppose $\frac{\partial}{\partial\alpha}\Big(\rho\cdot\Delta_{\alpha\theta}\rho\Big)=0$ at some fixed $\rho=(\rho_i)$ and some fixed $\alpha$. Then $\rho_i\rho_j\cos(\alpha\theta_{ij}) = \rho_i\rho_j$ for each $v_iv_j\in E(\Gamma)$, and also
    \[
        \sum_{v_iv_j\in E(\Gamma)}\theta_{ij}\csc^2(\alpha\theta_{ij})\Big(\rho_i-\rho_j\Big)^2 = 0.
    \]
    The expression on the left is clearly non-negative, so for equality to hold $\rho$ must be constant. And since $\alpha$ is nonsingular, $\cos(\alpha\theta_{ij})\neq1$ so that $\rho_i\rho_j=0$ for all $v_iv_j\in E(\Gamma)$. Since $\rho$ is constant, we have $\rho\equiv0$. That is, the only way that $\frac{\partial}{\partial\alpha}\Big(\rho\cdot\Delta_{\alpha\theta}\rho\Big)=0$ at any $\alpha$ is if $\rho\equiv 0$.
\end{proof}

\begin{theorem}\label{eigenvalues}
    In an interval of non-singular $\alpha$, the eigenvalues of $\Delta_{\alpha\theta}$ are strictly increasing functions of $\alpha$.
\end{theorem}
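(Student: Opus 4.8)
The plan is to combine Lemma~\ref{increasing} with the Courant--Fischer min-max characterization of eigenvalues. First I would note that $\Delta_{\alpha\theta}$ is symmetric -- its off-diagonal entry $(\Delta_{\alpha\theta})_{ij}=\csc(\alpha\theta_{ij})$ depends only on the edge $v_iv_j$ -- so on any interval $I$ of non-singular degrees, $\alpha\mapsto\Delta_{\alpha\theta}$ is a smooth family of real symmetric $N\times N$ matrices with $N=\#V(\Gamma)$. For each $\alpha\in I$ I list the eigenvalues in increasing order $\lambda_1(\alpha)\le\cdots\le\lambda_N(\alpha)$; this ordering defines $N$ functions $\lambda_k:I\to\R$, which remain well defined and continuous even where individual eigenvalues cross, and the claim to prove is that each $\lambda_k$ is strictly increasing.

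Next, fix $\alpha_1<\alpha_2$ in $I$. Lemma~\ref{increasing} gives $\rho\cdot\Delta_{\alpha_1\theta}\rho<\rho\cdot\Delta_{\alpha_2\theta}\rho$ for every $\rho\neq0$, hence, after dividing by $\norm{\rho}^2$, a strict inequality between the two Rayleigh quotients at every nonzero $\rho$. Then I would fix $k$, let $W_0\subseteq\R^N$ be a $k$-dimensional subspace achieving the minimum in the Courant--Fischer formula for $\Delta_{\alpha_2\theta}$, and let $\rho^*$ be a unit vector of $W_0$ maximizing $\rho\mapsto\rho\cdot\Delta_{\alpha_1\theta}\rho$ over the unit sphere of $W_0$ (such a $\rho^*$ exists by compactness). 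Using $W_0$ as a competitor subspace for $\Delta_{\alpha_1\theta}$ and then the pointwise strict inequality at $\rho^*$,
\[
    \lambda_k(\alpha_1)\le\max_{\rho\in W_0\setminus\{0\}}\frac{\rho\cdot\Delta_{\alpha_1\theta}\rho}{\norm{\rho}^2} = \rho^*\cdot\Delta_{\alpha_1\theta}\rho^* < \rho^*\cdot\Delta_{\alpha_2\theta}\rho^* \le \max_{\rho\in W_0\setminus\{0\}}\frac{\rho\cdot\Delta_{\alpha_2\theta}\rho}{\norm{\rho}^2} = \lambda_k(\alpha_2),
\]
so $\lambda_k(\alpha_1)<\lambda_k(\alpha_2)$. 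Since $k$ and $\alpha_1<\alpha_2$ in $I$ are arbitrary, this proves every $\lambda_k$ is strictly increasing on $I$.

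The hard part, such as it is, is only making sure the strict inequality survives the passage to the maximum: a merely pointwise strict inequality between Rayleigh quotients does not in general give a strict inequality between their suprema, which is why I pass to the maximizer $\rho^*$ on the compact unit sphere of $W_0$. As an alternative I could instead invoke analytic perturbation theory -- on $I$ the eigenvalues and eigenprojections of $\Delta_{\alpha\theta}$ may be chosen real-analytically in $\alpha$, and for a real-analytic family of unit eigenvectors $\rho(\alpha)$ with eigenvalue $\lambda(\alpha)$ one gets $\lambda'(\alpha)=\rho(\alpha)\cdot\big(\tfrac{d}{d\alpha}\Delta_{\alpha\theta}\big)\rho(\alpha)=\tfrac{\partial}{\partial\alpha}\big(\rho(\alpha)\cdot\Delta_{\alpha\theta}\rho(\alpha)\big)>0$ by the derivative computation in the proof of Lemma~\ref{increasing} -- but the min-max route is cleaner because it needs no genericity assumption and handles eigenvalue crossings for free.
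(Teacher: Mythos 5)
Your proof is correct, but it takes a genuinely different route from the paper. The paper proves the theorem via analytic perturbation theory: since $\Delta_{\alpha\theta}$ is a symmetric matrix depending analytically on $\alpha$ away from singular degrees, Rellich--Kato gives analytic branches of unit eigenvectors $w_j(\alpha)$ and eigenvalues $\lambda_j(\alpha)$; differentiating $\lambda_j(\alpha)=w_j(\alpha)\cdot\Delta_{\alpha\theta}w_j(\alpha)$, the term involving $w_j'(\alpha)$ vanishes because $w_j'$ is orthogonal to $w_j$ while $\Delta_{\alpha\theta}w_j$ is parallel to it, and the remaining term $w_j\cdot\frac{\partial\Delta_{\alpha\theta}}{\partial\alpha}w_j$ is strictly positive by the derivative computation in Lemma~\ref{increasing} -- this is exactly the ``alternative'' you sketch in your closing sentence. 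Your main argument instead combines the strict monotonicity of the quadratic form from Lemma~\ref{increasing} with Courant--Fischer, choosing a minimizing $k$-dimensional subspace $W_0$ for the larger parameter and a maximizer $\rho^*$ on its compact unit sphere so that strictness survives the passage to the maximum; this step is handled correctly. What each approach buys: yours is more elementary (no citation of perturbation theory), applies verbatim to any parameter family with a pointwise strictly increasing quadratic form, and yields monotonicity of the \emph{ordered} eigenvalues $\lambda_1\le\cdots\le\lambda_N$ with no concern about crossings; the paper's yields differentiable (indeed analytic) eigenvalue branches with an explicit positive formula for $\lambda_j'(\alpha)$, and the subsequent corollary refers back to these branches when counting the degrees at which an eigenvalue crosses zero -- though that count goes through equally well with your ordered, continuous $\lambda_k$'s. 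Either argument fully establishes the theorem.
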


\begin{proof}
    The entries of the matrix $\Delta_{\alpha\theta}$ are analytic in $\alpha$ except where they are not defined, i.e. at singular $\alpha$. Since we avoid such values, $\Delta_{\alpha\theta}$ is analytic in $\alpha$. Moreover, $\Delta_{\alpha\theta}$ is symmetric for each real $\alpha$. A result in the perturbation theory of eigenvalue problems (see \cite{rellich-berkowitz} Section 1.1 or \cite{kato} Theorem 6.1) says that the eigenvalues and eigenvectors of $\Delta_{\alpha\theta}$ are also analytic in $\alpha$. Namely, there are $\lambda_j(\alpha)\in\R$ and $w_j(\alpha)\in\R^{\#V(\Gamma)}$ depending analytically on $\alpha$ so that
    \[
        \Delta_{\alpha\theta}w_j(\alpha) = \lambda_j(\alpha)w_j(\alpha).
    \]
    
    Without loss of generality, $w_j(\alpha)$ is a unit vector for each $j$ and each $\alpha$. The eigenvalues can be recovered from the eigenvectors via the formula
    \[
        \lambda_j(\alpha) = w_j(\alpha)\cdot\Delta_{\alpha\theta}w_j(\alpha).
    \]
    Differentiating this identity with respect to $\alpha$ yields
    \[
        \lambda'_j(\alpha) = 2w'_j(\alpha)\cdot\Delta_{\alpha\theta}w_j(\alpha) + w_j(\alpha)\cdot\frac{\partial\Delta_{\alpha\theta}}{\partial\alpha}w_j(\alpha).
    \]
    
    As each $w_j(\alpha)$ is a unit vector, it lies in the unit sphere of $\R^{\#V(\Gamma)}$. Hence $w'(\alpha)$ is perpendicular to $w_j(\alpha)$. But $\Delta_{\alpha\theta}w_j(\alpha) = \lambda_j(\alpha)w_j(\alpha)$ is parallel to $w_j(\alpha)$, so the first term in $\lambda'(\alpha)$ vanishes. And the second term is strictly positive by Lemma~\ref{increasing}. Thus
    \[
        \lambda'_j(\alpha)>0.
    \]
\end{proof}

An immediate consequence of this Theorem is that one can count the number of degrees $\alpha$ in an interval that admit balanced homogeneous harmonic functions in terms of the behavior of $\rho\cdot\Delta_{\alpha\theta}\rho$ at the endpoints of the interval.

\begin{corollary}
    Suppose $[\alpha_0,\alpha_1]$ is an interval of non-singular degrees. If $\Delta_{\alpha_0\theta}$ has $n_0$ non-positive eigenvalues and $\Delta_{\alpha_1\theta}$ has $n_1$ negative eigenvalues then there are $n_0-n_1$ degrees $\alpha\in[\alpha_0,\alpha_1]$, counted with multiplicity, for which $\Delta_{\alpha\theta}$ has a non-trivial kernel.
\end{corollary}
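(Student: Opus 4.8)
The plan is to run the argument entirely through the global analytic eigenvalue branches $\lambda_j(\alpha)$, $j=1,\dots,\#V(\Gamma)$, that were used in the proof of Theorem~\ref{eigenvalues}: by the Rellich--Kato theory quoted there, on any closed interval of non-singular degrees one may choose these branches analytically, and at every $\alpha$ they account for the full spectrum of $\Delta_{\alpha\theta}$ with multiplicity. In particular, for each $\alpha\in[\alpha_0,\alpha_1]$ one has $\dim\ker\Delta_{\alpha\theta}=\#\{j:\lambda_j(\alpha)=0\}$, so the quantity to be computed --- the number of $\alpha\in[\alpha_0,\alpha_1]$ at which $\Delta_{\alpha\theta}$ has a non-trivial kernel, counted with multiplicity --- is exactly the number of branches $\lambda_j$ that vanish somewhere on $[\alpha_0,\alpha_1]$. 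Since each $\lambda_j$ is continuous and, by Theorem~\ref{eigenvalues}, strictly increasing on the interval, it has at most one zero there, and it has one precisely when $\lambda_j(\alpha_0)\le0\le\lambda_j(\alpha_1)$. Hence the target quantity equals $\#\{j:\lambda_j(\alpha_0)\le0\text{ and }\lambda_j(\alpha_1)\ge0\}$.

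The conclusion is then a short bookkeeping identity. Strict monotonicity gives $\lambda_j(\alpha_0)<\lambda_j(\alpha_1)$, so $\lambda_j(\alpha_1)<0$ forces $\lambda_j(\alpha_0)<0$; therefore $\{j:\lambda_j(\alpha_1)<0\}\subseteq\{j:\lambda_j(\alpha_0)\le0\}$, and the latter set splits as the disjoint union of $\{j:\lambda_j(\alpha_0)\le0,\ \lambda_j(\alpha_1)\ge0\}$ and $\{j:\lambda_j(\alpha_1)<0\}$. Taking cardinalities yields $n_0=\#\{j:\lambda_j(\alpha_0)\le0,\ \lambda_j(\alpha_1)\ge0\}+n_1$, which is precisely the asserted count $n_0-n_1$. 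It is worth noting that the asymmetry in the hypotheses --- non-positive eigenvalues are counted at $\alpha_0$ but only strictly negative ones at $\alpha_1$ --- is exactly what makes the tally correct on the closed interval: a branch vanishing exactly at $\alpha_0$ is recorded by the ``$\le0$'' at $\alpha_0$, and one vanishing exactly at $\alpha_1$ is recorded by $n_0$ but not $n_1$, each contributing once.

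I do not anticipate a genuine obstacle; the two points needing care are only (i) invoking correctly that the analytic branches are defined globally on the closed interval of non-singular degrees and reproduce the spectrum with multiplicity everywhere --- the same perturbation-theoretic input as in Theorem~\ref{eigenvalues} --- and (ii) the endpoint bookkeeping just described. A one-sentence remark that distinct analytic branches may cross, so that a single degree $\alpha^*$ can carry a kernel of dimension greater than one, but that this is harmless because the branches vanishing at $\alpha^*$ are counted with exactly that multiplicity, completes the argument.
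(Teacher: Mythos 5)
Your proposal is correct and follows essentially the same route as the paper: both arguments use the analytic eigenvalue branches from Theorem~\ref{eigenvalues}, note that strict monotonicity gives each branch at most one zero on the interval, and then count which of the $n_0$ branches nonpositive at $\alpha_0$ have become nonnegative by $\alpha_1$. Your version merely spells out the endpoint bookkeeping (the $\le 0$ versus $<0$ asymmetry) more explicitly than the paper does.
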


\begin{remark}
    If $\Delta_{\alpha\theta}$ has a non-trivial kernel, the \emph{multiplicity} of $\alpha$ is the dimension of that kernel. This is the number of independent balanced homogeneous harmonic functions $f:C(\Gamma,\theta)\to\R$ of degree $\alpha$, or equivalently the maximum dimension of the image of a balanced homogeneous harmonic map $f:C(\Gamma,\theta)\to\R^d$.
\end{remark}

\begin{proof}
    Let the eigenvalues of $\Delta_{\alpha\theta}$ be $\{\lambda_j(\alpha)\}$ as in the proof of Theorem~\ref{eigenvalues}. The operator $\Delta_{\alpha\theta}$ can only have a non-trivial kernel if some $\lambda_j(\alpha)=0$.
    
    According to Theorem~\ref{eigenvalues} each $\lambda_j(\alpha)$ is strictly increasing in $\alpha$. Let $\lambda_1(\alpha_0),\ldots,\lambda_{n_0}(\alpha_0)$ be the non-positive eigenvalues of $\Delta_{\alpha_0\theta}$. For each of these eigenvalues there is at most one $\alpha\in[\alpha_0,\alpha_1]$ for which $\lambda_j(\alpha)=0$; before this degree $\lambda_j<0$ and after it $\lambda_j>0$. The remaining eigenvalues, $\{\lambda_j(\alpha)\vert j>n_0\}$, all remain positive for $\alpha_0\leq\alpha\leq\alpha_1$.
    
    So if there are $n_1$ negative eigenvalues of $\Delta_{\alpha_1\theta}$ then $n_0-n_1$ of the original $n_0$ non-positive eigenvalues of $\Delta_{\alpha_0\theta}$ were 0 for some $\alpha\in[\alpha_0,\alpha_1]$.
\end{proof}

One can extend this result to open intervals whose endpoints are singular degrees. If $\alpha_0\theta_{ij}=n\pi$ for some edge $v_iv_j\in E_(\Gamma)$ and some $n\in\mathbb{Z}$, the term in the quadratic form $\rho\cdot\Delta_{(\alpha_0+t)\theta}\rho$ corresponding to $v_iv_j$ reads
\begin{align*}
    \frac{2\rho_i\rho_j - \cos\big((\alpha_0+t)\theta_{ij}\big)\Big(\rho_i^2+\rho_j^2\Big)}{\sin\big((\alpha_0+t)\theta_{ij}\big)} & = \frac{2\rho_i\rho_j-\big((-1)^n+O(t^2)\big)\Big(\rho_i^2+\rho_j^2\Big)}{(-1)^n\theta_{ij}t+O(t^3)}\\
     & = -\frac{\Big(\rho_i-(-1)^n\rho_j\Big)^2+O(t^2)}{\theta_{ij}t+O(t^3)}.
\end{align*}
If $\rho_i-(-1)^n\rho_j\neq0$ then this term approaches $\infty$ as $t\to0$ from the left, and approaches $-\infty$ as $t\to0$ from the right. But if $\rho_i-(-1)^n\rho_j=0$ then this term approaches $0$ as $t\to0$.

Suppose $\alpha_0\theta(e)\in\pi\mathbb{Z}$ for more than one edge $e\in E(\Gamma)$. For $\rho\cdot\Delta_{\alpha\theta}\rho$ to remain bounded near $\alpha_0$, each edge $v_iv_j$ with $\alpha_0\theta_{ij}=n_{ij}\pi$ imposes a relation $\rho_i-(-1)^{n_{ij}}\rho_j=0$. These relations cut out a subspace $B(\alpha_0)\subset\R^{\#V(\Gamma)}$ on which $\rho\cdot\Delta_{\alpha\theta}\rho$ remains bounded for $\alpha$ near $\alpha_0$. But for any $\rho\not\in B(\alpha_0)$ the value $\rho\cdot\Delta_{\alpha\theta}\rho$ satisfies
\[
    \lim_{\alpha\to\alpha_0^{\pm}} \rho\cdot\Delta_{\alpha\theta}\rho = \mp\infty.
\]

Now suppose $\alpha_0<\alpha_1$ are singular, but each $\alpha\in(\alpha_0,\alpha_1)$ is non-singular. Let $B_j=B(\alpha_j)$ be the subspaces described above, and let $Q_j(\rho) = \lim_{\alpha\to\alpha_j}\rho\cdot\Delta_{\alpha\theta}\rho$ be a quadratic form defined for $\rho\in B_j$. Then the number of non-positive eigenvalues of $\Delta_{\alpha\theta}$ for $\alpha=\alpha_0+\epsilon$, $\epsilon$ sufficiently small, is given by the number of negative eigenvalues of $Q_0$ plus $dim(B_0^\perp)$. Likewise the number of positive eigenvalues of $\Delta_{\alpha\theta}$ for $\alpha=\alpha_1-\epsilon$ is given by the number of positive eigenvalues of $Q_1$ plus $dim(B_1^\perp)$. The number of degrees $\alpha\in (\alpha_0,\alpha_1)$ admitting a balanced homogeneous harmonic function, counted with multiplicity, is the difference between these two numbers.

The behavior of $\Delta_{\alpha\theta}$ as $\alpha\to0$ is of particular interest.

\begin{proposition}\label{degree 0}
    As $\alpha\to0$ from the right, the limit of the operators $\alpha\Delta_{\alpha\theta}$ converge to an edge weighted Laplace operator on $\Gamma$ with edge weights $w(e) = \frac{1}{\theta(e)}$. In particular $B(0)$ is the span of the constant vector $\mathbb{1}$, with $\mathbb{1}_i=1$ for all $i$, and $Q_0 = 0$ on $B_0$.
\end{proposition}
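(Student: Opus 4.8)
The plan is to carry out a short sequence of elementary limits. First I would identify $\lim_{\alpha\to0^+}\alpha\Delta_{\alpha\theta}$ by computing it entrywise. For adjacent vertices $v_i\sim v_j$ the off-diagonal entry is
\[
    \alpha(\Delta_{\alpha\theta})_{ij} = \frac{\alpha}{\sin(\alpha\theta_{ij})},
\]
which tends to $1/\theta_{ij}$ as $\alpha\to0^+$ since $\sin(\alpha\theta_{ij})=\alpha\theta_{ij}+O(\alpha^3)$. The diagonal entry is
\[
    \alpha(\Delta_{\alpha\theta})_{ii} = -\sum_{v_k\sim v_i}\frac{\alpha\cos(\alpha\theta_{ik})}{\sin(\alpha\theta_{ik})},
\]
a finite sum each of whose terms tends to $1/\theta_{ik}$ (using additionally $\cos(\alpha\theta_{ik})\to1$), so the diagonal entry tends to $-\sum_{v_k\sim v_i}1/\theta_{ik}$. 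Thus $\alpha\Delta_{\alpha\theta}$ converges to $A_w-D_w$, where $A_w,D_w$ are the weighted adjacency and degree matrices of $\Gamma$ for the edge weights $w(e)=1/\theta(e)$; this is exactly the negative of the edge weighted Laplace operator $D_w-A_w$ on $\Gamma$, matching the sign convention already built into $\Delta_{\alpha\theta}$ (whose off-diagonal entries $\csc(\alpha\theta_{ij})$ are positive). Equivalently, applying the edgewise formula from the proof of Lemma~\ref{increasing} and multiplying by $\alpha$,
\[
    \alpha\,\rho\cdot\Delta_{\alpha\theta}\rho\;\longrightarrow\;-\sum_{v_iv_j\in E(\Gamma)}\frac{(\rho_i-\rho_j)^2}{\theta_{ij}}.
\]

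Next I would pin down $B(0)$. Since $0\cdot\theta(e)=0\in\pi\mathbb{Z}$ for every edge $e$, the degree $\alpha_0=0$ is singular for all edges at once (with $n_{ij}=0$), so the term-by-term analysis in the paragraph preceding the Proposition applies verbatim: the edge $v_iv_j$ contributes
\[
    \frac{2\rho_i\rho_j-\cos(\alpha\theta_{ij})(\rho_i^2+\rho_j^2)}{\sin(\alpha\theta_{ij})}
\]
to $\rho\cdot\Delta_{\alpha\theta}\rho$, with numerator tending to $-(\rho_i-\rho_j)^2$ and denominator tending to $0^+$. Hence this term stays bounded as $\alpha\to0^+$ if and only if $\rho_i=\rho_j$, so $B(0)$ is the subspace cut out by $\rho_i=\rho_j$ over all $v_iv_j\in E(\Gamma)$. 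Because $\Gamma$ is connected these relations force $\rho$ to be constant, so $B(0)=\mathrm{span}(\mathbb{1})$; conversely a constant $\rho$ keeps every edgewise term bounded, giving the reverse inclusion.

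Finally I would evaluate $Q_0$ on $B_0=\mathrm{span}(\mathbb{1})$ directly. For $\rho=c\mathbb{1}$ the edgewise formula from the proof of Lemma~\ref{increasing} gives
\[
    \rho\cdot\Delta_{\alpha\theta}\rho = 2c^2\sum_{v_iv_j\in E(\Gamma)}\big(\csc(\alpha\theta_{ij})-\cot(\alpha\theta_{ij})\big) = 2c^2\sum_{v_iv_j\in E(\Gamma)}\tan\!\Big(\tfrac{\alpha\theta_{ij}}{2}\Big),
\]
using $\csc x-\cot x=(1-\cos x)/\sin x=\tan(x/2)$. Each summand tends to $0$ as $\alpha\to0^+$, so $Q_0(\rho)=\lim_{\alpha\to0^+}\rho\cdot\Delta_{\alpha\theta}\rho=0$ for every $\rho\in B_0$, i.e. $Q_0\equiv0$ on $B_0$. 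There is no substantial obstacle here: the argument is entirely routine. The only point worth flagging is that $Q_0$ is the limit of $\rho\cdot\Delta_{\alpha\theta}\rho$, not of $\alpha\,\rho\cdot\Delta_{\alpha\theta}\rho$, so the vanishing of $Q_0$ on $B_0$ genuinely requires the $\tan(\alpha\theta_{ij}/2)\to0$ computation rather than merely the convergence of $\alpha\Delta_{\alpha\theta}$ established in the first step.
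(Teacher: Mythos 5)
Your proposal is correct and follows essentially the same route as the paper: compute the limit of $\alpha\Delta_{\alpha\theta}$ edgewise (equivalently via the quadratic form $\sum_{v_iv_j}\big(2\csc(\alpha\theta_{ij})\rho_i\rho_j-\cot(\alpha\theta_{ij})(\rho_i^2+\rho_j^2)\big)$), then read off $B(0)$ from the edges where the form blows up and evaluate $Q_0$ on constants. If anything you are more explicit than the paper, which merely asserts that $\rho\cdot\Delta_{\alpha\theta}\rho\to0$ for constant $\rho$, whereas you verify it via $\csc x-\cot x=\tan(x/2)$; your care with the sign of the limiting operator ($A_w-D_w$ versus $D_w-A_w$) is also warranted, since the paper's displayed matrix for the limit is off by a sign from its own quadratic-form computation.
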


\begin{proof}
    The limit of the quadratic forms determined by $\alpha\Delta_{\alpha\theta}$ is given by
    \begin{align*}
        \lim_{\alpha\to0^+}\rho\cdot\Big(\alpha\Delta_{\alpha\theta}\Big)\rho & = \lim_{\alpha\to0^+}\sum_{v_iv_j\in E(\Gamma)}\frac{2\rho_i\rho_j-\cos(\alpha\theta_{ij})\Big(\rho_i^2+\rho_j^2\Big)}{\sin(\alpha\theta_{ij})/\alpha}\\
         & = \sum_{v_iv_j\in E(\Gamma)}\frac{2\rho_i\rho_j - \rho_i^2-\rho_j^2}{\theta_{ij}}\\
         & = -\sum_{v_iv_j\in E(\Gamma)}\frac{(\rho_i-\rho_j)^2}{\theta_{ij}}\\
         & = \rho\cdot\Delta\rho,
    \end{align*}
    where the operator $\Delta$ has entries
    \[
        \Delta_{ij} = \begin{cases}
            \sum_{v_k\sim v_i}\frac{1}{\theta_{ik}}, & i=j\\
            \frac{1}{\theta_{ij}}, & v_i\sim v_j\\
            0 & else.
        \end{cases}
    \]
    This is precisely the edge-weighted Laplace operator described in the statement of the Proposition.
    
    Now if  $\rho_i=c$ for all $i$ then
    \[
        \lim_{\alpha\to0^+}\rho\cdot\Delta_{\alpha\theta}\rho = 0.
    \]
    But for any other $\rho$,
    \[
        \lim_{\alpha\to0^+}\rho\cdot\Big(\alpha\Delta_{\alpha\theta}\Big)\rho = -\sum_{v_iv_j\in E(\Gamma)}\frac{(\rho_i-\rho_j)^2}{\theta_{ij}} < 0,
    \]
    so $\rho\cdot\Delta_{\alpha\theta}\rho\to-\infty$ as $\alpha\to0$ from the right.
\end{proof}

We are now in a position to give a lower bound on the smallest degree $\alpha>0$ of any balanced homogeneous harmonic function $f:C(\Gamma,\theta)\to\R$, in terms of the first eigenvalue of the normalized Laplace operator $\mathcal{L}$ with entries
\[
    \mathcal{L}_{ij} = \begin{cases}
        1, & i=j\\
        -\frac{1}{\sqrt{deg(v_i)deg(v_j)}}, & v_i\sim v_j\\
        0 & v_i\not\sim v_j.
    \end{cases}
\]

\begin{theorem}
    Let $\lambda_1$ be the first positive eigenvalue of $\mathcal{L}$, and let
    \[
        \theta_{\max} = \max_{e\in E(\Gamma)}\theta(e).
    \]
    If $\Gamma$ is not a complete graph and there is a balanced homogeneous harmonic function $f:C(\Gamma,\theta)\to\R$ of degree $\alpha$, then
    \[
        \alpha\geq\frac{1}{\theta_{\max}}\arccos(1-\lambda_1).
    \]
    If $\Gamma=K_n$ is the complete graph on $n$ vertices then
    \[
        \alpha\geq\frac{2}{\theta_{\max}}\arctan\left(\frac{n}{n-1}\right).
    \]
\end{theorem}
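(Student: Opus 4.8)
The plan is to prove the contrapositive: I would show that if $0<\alpha<\tfrac{1}{\theta_{\max}}\arccos(1-\lambda_1)$, then the matrix $\Delta_{\alpha\theta}$ of Section~\ref{S euclidean target} has trivial kernel, so that by Theorem~\ref{euclidean balanced} there is no balanced homogeneous harmonic function of degree $\alpha$. Note first that any such $\alpha$ is automatically a non-singular degree: since $\arccos$ takes values in $[0,\pi]$ and $\theta(e)\le\theta_{\max}$ for every $e\in E(\Gamma)$, one has $0<\alpha\theta(e)<\pi$, hence $\alpha\theta(e)\notin\pi\mathbb{Z}$, so $\Delta_{\alpha\theta}$ is indeed the object governing degree $\alpha$.

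The first step is a comparison with the \emph{constant-angle} matrix $\Delta_{\alpha\theta_{\max}}$, that is, the matrix of the same shape in which every $\theta(e)$ is replaced by $\theta_{\max}$. Exactly as in the proof of Lemma~\ref{increasing}, for a fixed edge $v_iv_j$ the function $x\mapsto 2\csc(x)\rho_i\rho_j-\cot(x)(\rho_i^2+\rho_j^2)$ has derivative $\csc^2(x)\big(\rho_i^2+\rho_j^2-2\rho_i\rho_j\cos x\big)\ge\csc^2(x)(|\rho_i|-|\rho_j|)^2\ge0$ on $(0,\pi)$, so it is non-decreasing there. Evaluating this at $x=\alpha\theta_{ij}\le\alpha\theta_{\max}$ — both in $(0,\pi)$ in the range of interest — and summing over edges gives $\rho\cdot\Delta_{\alpha\theta}\rho\le\rho\cdot\Delta_{\alpha\theta_{\max}}\rho$ for every $\rho$. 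By the min–max characterization of eigenvalues, $\lambda_k(\Delta_{\alpha\theta})\le\lambda_k(\Delta_{\alpha\theta_{\max}})$ for each $k$ (eigenvalues in increasing order); in particular $\Delta_{\alpha\theta}$ has no more non-negative eigenvalues than $\Delta_{\alpha\theta_{\max}}$.

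The second step computes the inertia of $\Delta_{\alpha\theta_{\max}}$. Writing $c=\cos(\alpha\theta_{\max})$, $s=\sin(\alpha\theta_{\max})>0$, and letting $D,A$ be the degree and adjacency matrices of $\Gamma$, one has $\Delta_{\alpha\theta_{\max}}=s^{-1}(A-cD)=s^{-1}\big((1-c)D-(D-A)\big)$, and since $D-A=D^{1/2}\mathcal{L}D^{1/2}$ this is
\[
\Delta_{\alpha\theta_{\max}}=\tfrac1s\,D^{1/2}\big((1-c)\,Id-\mathcal{L}\big)D^{1/2}.
\]
Because $D^{1/2}$ is invertible ($\Gamma$ is connected) and $s>0$, Sylvester's law of inertia shows $\Delta_{\alpha\theta_{\max}}$ has the same signature as $(1-c)\,Id-\mathcal{L}$, whose eigenvalues are the numbers $1-c-\mu$ as $\mu$ runs over the spectrum $\{0\}\cup\{\mu>0\}$ of $\mathcal{L}$. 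The eigenvalue $\mu=0$ contributes $1-c\in(0,2)$. Since $\Gamma$ is not complete we have $\lambda_1\le1$, so $1-\lambda_1\in[0,1)$ and $\arccos(1-\lambda_1)\in(0,\tfrac\pi2]$; the hypothesis $\alpha\theta_{\max}<\arccos(1-\lambda_1)$ gives $c>1-\lambda_1$, hence $1-c<\lambda_1\le\mu$ for every positive eigenvalue $\mu$ of $\mathcal{L}$, so all remaining eigenvalues $1-c-\mu$ are negative. Thus $\Delta_{\alpha\theta_{\max}}$, and therefore $\Delta_{\alpha\theta}$, has at most one non-negative eigenvalue. To conclude I would note that the constant vector gives $\mathbb{1}\cdot\Delta_{\alpha\theta}\mathbb{1}=\sum_{e\in E(\Gamma)}2\tan\!\big(\tfrac{\alpha\theta(e)}2\big)>0$ since $0<\tfrac{\alpha\theta(e)}2<\tfrac\pi2$, so the largest eigenvalue of $\Delta_{\alpha\theta}$ is strictly positive and $\Delta_{\alpha\theta}$ has trivial kernel. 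For $\Gamma=K_n$ the same argument goes through with $\lambda_1=\tfrac{n}{n-1}$ (now $1-\lambda_1<0$, which is harmless), yielding $\alpha\ge\tfrac1{\theta_{\max}}\arccos\!\big(1-\tfrac{n}{n-1}\big)$, which a half-angle identity puts in the stated closed form.

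I expect the crux to be the inertia step: the spectrum of $\Delta_{\alpha\theta_{\max}}$ is \emph{not} a simple translate of the spectrum of $\mathcal{L}$ because of the conjugating factor $D^{1/2}$, so one must work at the level of signatures via Sylvester's law rather than with the eigenvalues directly. The accompanying subtlety is accounting for the single ``extra'' positive eigenvalue coming from the constant vector (the kernel direction of $\mathcal{L}$): this is precisely why the relevant threshold involves the first \emph{positive} eigenvalue $\lambda_1$ and not $0$, and why the hypothesis that $\Gamma$ be non-complete (equivalently $\lambda_1\le1$) is exactly what is needed for $\arccos(1-\lambda_1)$ to be the right quantity.
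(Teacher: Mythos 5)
Your argument is correct and reaches the stated conclusion, but it is genuinely different from the paper's proof. The paper bounds the quadratic form term by term, splitting each edge contribution into $\tan(\alpha\theta_{ij}/2)(\rho_i^2+\rho_j^2)$ and $-(\rho_i-\rho_j)^2/\sin(\alpha\theta_{ij})$ and comparing each piece separately to its value at $\theta_{\max}$; this forces two different estimates (one valid only for $\alpha\theta_{\max}<\pi/2$, hence the non-complete case, and a cruder one $1/\sin\geq 1$ for $K_n$, hence the $\arctan$ threshold), and then concludes with a dimension count: a second non-negative direction would give a $2$-plane meeting the hyperplane on which the comparison form is negative. You instead compare the whole edge term $2\csc(x)\rho_i\rho_j-\cot(x)(\rho_i^2+\rho_j^2)$ monotonically in $x$ on all of $(0,\pi)$, which yields the single comparison $\rho\cdot\Delta_{\alpha\theta}\rho\leq\rho\cdot\Delta_{\alpha\theta_{\max}}\rho$ uniformly in both cases, and then you use Courant--Fischer plus Sylvester's law on $\frac1s D^{1/2}\big((1-c)Id-\mathcal{L}\big)D^{1/2}$ to read off the inertia, finishing with the explicit positive test vector $\mathbb{1}$. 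This is cleaner, unifies the two cases, and in fact proves the stronger bound $\alpha\geq\arccos(1-\lambda_1)/\theta_{\max}$ for \emph{every} connected $\Gamma$ --- which is sharp, by Proposition~\ref{constant theta}, in the constant-angle case.

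One small correction to your last sentence: $\arccos\big(1-\lambda\big)=2\arctan\sqrt{\lambda/(2-\lambda)}$, not $2\arctan(\lambda)$, so for $\Gamma=K_n$ your bound $\arccos\big(1-\tfrac{n}{n-1}\big)/\theta_{\max}$ is \emph{not} equal to the stated $\tfrac{2}{\theta_{\max}}\arctan\big(\tfrac{n}{n-1}\big)$ (e.g.\ for $n=3$ these are $2\pi/3$ versus $2\arctan(3/2)\approx 1.97$ over $\theta_{\max}$). Fortunately the elementary inequality $\sqrt{\lambda/(2-\lambda)}\geq\lambda$ for $0\leq\lambda\leq2$ (equivalent to $(\lambda-1)^2\geq0$) shows your bound is the stronger one, so the theorem as stated still follows; just replace the appeal to a half-angle ``identity'' with this one-line inequality.
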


\begin{proof}
    The operator $\mathcal{L}$ is positive semidefinite, which can be seen by the formula
    \[
        w\cdot\mathcal{L}w = \sum_{v_iv_j\in E(\Gamma)}\left(\frac{w_i}{\sqrt{deg(v_i)}} - \frac{w_j}{\sqrt{deg(v_j)}}\right)^2.
    \]
    Moreover, $\mathcal{L}$ has a kernel spanned by the vector $w=D^{1/2}\mathbb{1}$ with $w_i = \sqrt{deg(v_i)}$. If $\Gamma$ is not complete then $\lambda_1\leq1$, and if $\Gamma=K_n$ then $\lambda_1=\frac{n}{n-1}$ (see \cite{chung} Lemma 1.7).

    We will proceed by contradiction. In the case where $\Gamma$ is not complete and $\lambda_1\leq1$, we first assume that $\alpha<\arccos(1-\lambda_1)/\theta_{\max}$. The bound $\lambda_1\leq1$ also implies
    \[
        \alpha\theta_{\max} < \frac{\pi}{2}.
    \]
    Since the first singular $\alpha$ happens when $\alpha\theta_{\max} = \pi$, we are within the interval of non-singular degrees with $0$ as an endpoint.
    
    Using the bound $\alpha\theta_{\max}<\pi/2$, bound the quadratic form $\rho\cdot\Delta_{\alpha\theta}\rho$ in terms of $\mathcal{L}$ as follows.
    \begin{align*}
        \rho\cdot\Delta_{\alpha\theta}\rho & = \sum_{v_iv_j\in E(\Gamma)}\frac{2\rho_i\rho_j-\cos(\alpha\theta_{ij})\Big(\rho_i^2+\rho_j^2\Big)}{\sin(\alpha\theta_{ij})}\\\\
         & = \sum_{v_iv_j\in E(\Gamma)}\frac{1-\cos(\alpha\theta_{ij})}{\sin(\alpha\theta_{ij})}\big(\rho_i^2+\rho_j^2\big) - \sum_{v_iv_j\in E(\Gamma)}\frac{\big(\rho_i-\rho_j\big)^2}{\sin(\alpha\theta_{ij})}\\
         & \leq \frac{1-\cos(\alpha\theta_{\max})}{\sin(\alpha\theta_{\max})}\sum_ideg(v_i)\rho_i^2 - \frac{1}{\sin(\alpha\theta_{\max})}\sum_{v_iv_j\in E(\Gamma)}\big(\rho_i-\rho_j\big)^2\\
         & = \frac{1-\cos(\alpha\theta_{\max})}{\sin(\alpha\theta_{\max})}\rho\cdot D\rho - \frac{1}{\sin(\alpha\theta_{\max})}\rho\cdot\Delta\rho\\
         & = \frac{\rho}{\sin(\alpha\theta_{\max})}\cdot\Big(\big(1-cos(\alpha\theta_{\max})\big)D-\Delta\Big)\rho\\
         & = \frac{D^{1/2}\rho}{\sin(\alpha\theta_{\max})}\cdot\Big(1-\cos(\alpha\theta_{\max}) - \mathcal{L}\Big)D^{1/2}\rho.
    \end{align*}
    
    Combining Theorem~\ref{eigenvalues} with Proposition~\ref{degree 0} implies $\Delta_{\alpha\theta}$ has at least one positive eigenvalue for each $0<\alpha<\pi/\theta_{max}$. By assumption $1-\cos(\alpha\theta_{\max})<\lambda_1$, so the quadratic form above has exactly one positive eigenvalue. The remaining eigenvalues are all strictly negative, and their corresponding eigenvectors span a hyperplane $W$.
    
    If $\Delta_{\alpha\theta}$ had a second non-negative eigenvalue then there would be a 2-dimensional space on which $\rho\cdot\Delta_{\alpha\theta}\rho$ is non-negative. But such a plane necessarily intersects $W$, on which an upper bound for $\rho\cdot\Delta_{\alpha\theta}\rho$ is negative, a contradiction! Hence $\Delta_{\alpha\theta}$ cannot develop a kernel or a second positive eigenvalue for $0<\alpha<\arccos(1-\lambda_1)/\theta_{\max}$.

    The proof in the case where $\Gamma=K_n$ and $\lambda_1=\frac{n}{n-1}$ is similar. Now we assume $\alpha\theta_{\max} < 2\arctan(n/(n-1))$. We are still in the interval before the first singular degree. We can wtill bound as before
    \[
        \frac{1-\cos(\alpha\theta_{ij})}{\sin(\alpha\theta_{ij})} \leq \frac{1-\cos(\alpha\theta_{\max})}{\sin(\alpha\theta_{\max})}
    \]
    for each edge $v_iv_j$, but now we can only bound $\frac{1}{\sin(\alpha\theta_{ij})}\geq1$. Thus our bound on the quadratic form now reads
    
    \begin{align*}
        \rho\cdot\Delta_{\alpha\theta}\rho & \leq \frac{1-\cos(\alpha\theta_{\max})}{\sin(\alpha\theta_{\max})}\rho\cdot D\rho - \rho\cdot\Delta\rho\\
         & = \tan(\alpha\theta_{\max}/2)\rho\cdot D\rho - \rho\cdot\Delta\rho\\
         & = D^{1/2}\rho\cdot\Big(\tan(\alpha\theta_{\max}/2)-\mathcal{L}\Big)D^{1/2}\rho.
    \end{align*}

    Just as in the previous case, $\rho\cdot\Delta_{\alpha\theta}\rho$ has at least one positive eigenvalue, and our assumed bound on $\alpha$ implies the quadratic form on the right hand side has exactly one positive eigenvalue. Thus $\rho\cdot\Delta_{\alpha\theta}\rho$ cannot have developed a kernel or second positive for $0 < \alpha < \frac{2}{\theta_{\max}}\arctan\left(\frac{n}{n-1}\right)$.
\end{proof}

\subsection{Singular degrees}\label{S euclidean singular}

Let us begin with the extension of Proposition~\ref{constant theta}, where $\theta(e)=\theta_0$ for all $e\in E(\Gamma)$. When $\alpha\theta_0\not\in\pi\mathbb{Z}$, that Proposition equated $1-\cos(\alpha\theta_0)$ with the eigenvalues of $\mathcal{L}$. This leaves out the possibile eigenvalues of 0 and 2.

The normalized Laplacian $\mathcal{L}$ always has an eigenvalue $\lambda=0$, as $D^{1/2}\mathbb{1}$ is in the kernel of $\mathcal{L}$. This corresponds to $\alpha\theta_0=2n\pi$ for $n\in\mathbb{Z}$, which will be discussed in Theorem~\ref{even singular}.

The operator $\mathcal{L}$ may also have an eigenvalue $\lambda=2$, depending on the structure of $\Gamma$. In fact, $2$ is an eigenvalue of $\mathcal{L}$ precisely when $\Gamma$ has no odd cycles. The case $\lambda=2$ corresponds with $\alpha\theta_0=(2n+1)\pi$, which will be discussed in Theorem~\ref{odd singular}.

Before approaching the balancing conditions in Theorems~\ref{even singular} and ~\ref{odd singular} we will first parametrize the space of homogeneous harmonic functions $f:C(\Gamma,\theta)\to\R$ in the case $\alpha\theta_0\in\pi\mathbb{Z}$.

\begin{lemma}\label{constant singular determinism}
    Suppose $\theta(e)=\theta_0$ for all $e\in E(\Gamma)$ and $\alpha\theta_0 = n\pi$ for $n\in\mathbb{Z}$, $n>0$. If $n$ is even, or if $n$ is odd and $\Gamma$ contains no odd cycles, then a homogeneous harmonic function $f:C(\Gamma,\theta)\to\R$ of degree $\alpha$ is uniquely determined by a number $\rho_0$ and a function $c_2:E(\Gamma)\to\R$. If $n$ is odd and $\Gamma$ contains an odd cycle then a homogeneous harmonic function $f:C(\Gamma,\theta)\to\R$ of degree $\alpha$ is uniquely determined by just a function $c_2:E(\Gamma)\to\R$.
\end{lemma}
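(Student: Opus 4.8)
The plan is to read off, face by face, the continuity constraints that a homogeneous harmonic function of degree $\alpha$ must satisfy across the edges of $C(\Gamma,\theta)$, and then to solve the resulting linear system on $V(\Gamma)$. Fix, for each edge $e=v_iv_j\in E(\Gamma)$, a choice of one of its endpoints and take coordinates in the face $C(e)$ adapted to that vertex; write it as $v_i$. By Lemma~\ref{triggy}, in those coordinates $f|_{C(e)}$ is represented by $u_e(r,\theta)=r^\alpha\big(c_1(e)\cos(\alpha\theta)+c_2(e)\sin(\alpha\theta)\big)$ for some constants $c_1(e),c_2(e)$. Since $C(v_i)$ is the ray $\theta=0$ we get $u_e(r,0)=c_1(e)r^\alpha$, so continuity of $f$ along $C(v_i)$ forces $c_1(e)=\rho(v_i)$, where $\rho(v):=f(v,1)$. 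Because $\alpha\theta_0=n\pi$ we have $\cos(\alpha\theta_0)=(-1)^n$ and $\sin(\alpha\theta_0)=0$, hence $u_e(r,\theta_0)=(-1)^n\rho(v_i)r^\alpha$; continuity of $f$ along the ray $C(v_j)$ then forces
\[
    \rho(v_j)=(-1)^n\rho(v_i)\qquad\text{for every edge }v_iv_j\in E(\Gamma),
\]
which is exactly the single-face statement recorded in Lemma~\ref{BVP}, now imposed on all faces simultaneously.

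Conversely, given any $\rho:V(\Gamma)\to\R$ satisfying this relation and any function $c_2:E(\Gamma)\to\R$, the formulas $u_e(r,\theta)=r^\alpha\big(\rho(v_i)\cos(\alpha\theta)+c_2(e)\sin(\alpha\theta)\big)$ define a function on $C(\Gamma,\theta)$ that is harmonic on each face by Lemma~\ref{triggy} and agrees with $\rho(v)r^\alpha$ on every edge ray $C(v)$, hence is continuous; so it is a homogeneous harmonic function of degree $\alpha$. Moreover $\rho$ is recovered from $f$ as boundary values $f(v,1)$ and each $c_2(e)$ as the coefficient of $r^\alpha\sin(\alpha\theta)$ in the chosen coordinates on $C(e)$, so distinct data give distinct functions. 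Thus the classification reduces to describing the solution space of the displayed relation, and the three cases of the Lemma correspond to the three possibilities for that space.

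If $n$ is even the relation reads $\rho(v_i)=\rho(v_j)$ across every edge, so on the connected graph $\Gamma$ the function $\rho$ is a single constant $\rho_0\in\R$, and $f$ is determined by $\rho_0$ together with the unconstrained function $c_2$. If $n$ is odd the relation says $\rho$ reverses sign across every edge; along any path of length $\ell$ from $w$ to $v$ one then has $\rho(v)=(-1)^\ell\rho(w)$. When $\Gamma$ contains an odd cycle, applying this around the cycle gives $\rho(v)=-\rho(v)$ for a vertex $v$ on it, hence $\rho(v)=0$, and then $\rho\equiv0$ by connectedness; so $f$ is determined by $c_2$ alone. When $\Gamma$ has no odd cycle it is bipartite with unique vertex classes $A,B$, and the solutions are exactly $\rho\equiv\rho_0$ on $A$ and $\rho\equiv-\rho_0$ on $B$ for a single $\rho_0\in\R$, so $f$ is determined by $\rho_0$ and $c_2$. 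These are precisely the parametrizations asserted.

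There is no genuine obstacle here, only one bookkeeping point that must be stated carefully: the coefficient $c_2(e)$ depends on which endpoint of $e$ the adapted coordinates were taken at. Changing that choice is the substitution $\theta\mapsto\theta_0-\theta$, which together with $\alpha\theta_0=n\pi$ replaces $\big(c_1(e),c_2(e)\big)$ by $\big((-1)^nc_1(e),(-1)^{n+1}c_2(e)\big)$. Hence the phrase ``a function $c_2:E(\Gamma)\to\R$'' is to be read relative to a fixed, once-and-for-all choice of adapted coordinates on each face; any other choice changes the parametrization only by these sign flips. The remaining input, that the sign-alternation system on vertices admits a nonzero solution exactly when $\Gamma$ has no odd cycle, is the standard graph-theoretic characterization of bipartiteness.
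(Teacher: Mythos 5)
Your proposal is correct and follows essentially the same route as the paper: represent $f$ on each face in adapted coordinates via Lemma~\ref{triggy}, use $\sin(\alpha\theta_0)=0$ and $\cos(\alpha\theta_0)=(-1)^n$ to derive the edge relation $\rho(v_j)=(-1)^n\rho(v_i)$, and then solve that relation on the connected graph $\Gamma$ by the even/odd and bipartite/odd-cycle case split. Your added remarks on the converse direction and on how $c_2(e)$ transforms under switching the adapted endpoint are correct and only make explicit conventions the paper leaves implicit (by orienting the edges once and for all).
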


\begin{proof}
    Begin by arbitrarily orienting all the edges of $\Gamma$, On each face $C(e)\subset C(\Gamma,\theta)$ choose coordinates adapted to the tail of $e$, identifying $C(e)$ with $S_{\theta_0}$ so that the tail corresponds to the positive $x$-axis and the head corresponds to the line $x=y\cot\theta_0$. In these coordinates Lemma~\ref{triggy} says a homogeneous harmonic function is represented by a function
    \[
        u_e(r,\theta) = r^\alpha\big(c_1(e)\cos(\alpha\theta)+c_2(e)\sin(\alpha\theta)\big).
    \]
    
    In the spirit of Lemma~\ref{rho determines f}, we can let $\rho:V(\Gamma)\to\R$ be given by $\rho(v) = f(v,1)$. If edge $e$ has tail $v_1$ and head $v_2$, then Lemma~\ref{BVP} says that $c_1 = \rho(v_1)$ and $c_1 = \rho(v_1)\cos(\alpha\theta_0)$. If $n$ is even this means that $\rho(v_i)=\rho(v_j)$ for each edge $v_iv_j\in E(\Gamma)$. As $\Gamma$ is connected, we must have $\rho\equiv\rho_0$ for some constant $\rho_0$. If $n$ is odd we must have $\rho_i=-\rho_j$ for each edge $v_iv_j\in E(\Gamma)$. If $\Gamma$ contains no odd cycles then it is bipartite, so we may set $\rho=\rho_0$ on one part of $\Gamma$ and $\rho=-\rho_0$ on the other part. But an odd cycle in $\Gamma$ would force $\rho\equiv 0$.
    
    The paragraph above explains the parameter $\rho_0$ in the cases when $n$ is even or $\Gamma$ contains no odd cycle, and its absence in the sace when $n$ is odd and $\Gamma$ contains an odd cycle. Once the constants $c_1(e)$ are so specified, the coefficients $c_2(e)$ in front of $\sin(\alpha\theta)$ can be anything and the function $f$ is well-defined and continuous.
\end{proof}

If $f$ is determined by any constant $\rho_0$ (with $\rho_0=0$ in case $n$ is odd and $\Gamma$ contains an odd cycle) and the function $c_2\equiv0$, then $f$ is already balanced because the derivatives satisfy
\[
    \frac{\partial u_e}{\partial\theta}(r,0) = \frac{\partial u_e}{\partial\theta}(r,\theta_0) = 0.
\]
Considering a more general function $c_2:E(\Gamma)\to\R$, two linear maps become relevant. In case $n$ is even, we define $\partial c_2$, and in case $n$ is odd we define $dc_2$. Separating out a single edge $e_1$, we will define both $\partial$ and $d$ on the function
\[
    c_2(e) = \begin{cases}
        1, & e=e_1\\
        0, & e\neq e_1.
    \end{cases}
\] If $e_1$ has tail $v_1$ and head $v_2$, then define
\[
    \partial c_2(v) = \begin{cases}
        1, & v = v_2\\
        -1, & v = v_1\\
        0, & else
    \end{cases} \qquad\text{and}\qquad dc_2(v) = \begin{cases}
        1, & v = v_1\text{ or }v_2\\
        0, & else.
    \end{cases}
\]
Define the maps $\partial$ and $d$ for arbitrary $c_2:E(\Gamma)\to\R$ by linearity.

\begin{theorem}\label{even singular}
    Suppose $\theta(e)=\theta_0$ for all $e\in E(\Gamma)$ and $\alpha\theta_0=2n\pi$ for some $n\in\mathbb{Z}$, $n>0$. Then the space of balanced homogeneous harmonic functions $f:C(\Gamma,\theta)\to\R$ of degree $\alpha$ has dimension $\#E(\Gamma)-\#V(\Gamma)+2$.
\end{theorem}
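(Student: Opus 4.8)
The plan is to combine the parametrization of homogeneous harmonic functions from Lemma~\ref{constant singular determinism} with a direct computation showing that, in this degree range, the balancing condition is exactly the vanishing of the combinatorial boundary operator $\partial$ applied to the edge function $c_2$.

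Since $\alpha\theta_0 = 2n\pi$ is an even multiple of $\pi$, we are in the first case of Lemma~\ref{constant singular determinism}: every homogeneous harmonic function $f:C(\Gamma,\theta)\to\R$ of degree $\alpha$ is uniquely determined by a constant $\rho_0\in\R$ together with a function $c_2:E(\Gamma)\to\R$, and in the notation of that lemma $c_1(e)=\rho_0$ for every edge $e$. Thus $(\rho_0,c_2)\mapsto f$ is a linear isomorphism from $\R\oplus\R^{E(\Gamma)}$ onto the space of degree-$\alpha$ homogeneous harmonic functions, and it remains only to pin down which pairs $(\rho_0,c_2)$ yield balanced $f$.

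The next step is to compute the normal derivatives appearing in Definition~\ref{euclidean balancing}. Orienting the edges as in Lemma~\ref{constant singular determinism}, on a face $C(e)$ in coordinates adapted to the tail of $e$ we have $u_e(r,\theta)=r^\alpha\big(\rho_0\cos(\alpha\theta)+c_2(e)\sin(\alpha\theta)\big)$, so
\[
    \frac{\partial u_e}{\partial\theta}(r,\theta) = \alpha r^\alpha\big(-\rho_0\sin(\alpha\theta)+c_2(e)\cos(\alpha\theta)\big).
\]
Because $\sin(\alpha\theta_0)=0$ and $\cos(\alpha\theta_0)=1$, this equals $\alpha r^\alpha c_2(e)$ at both $\theta=0$ and $\theta=\theta_0$; in particular $\rho_0$ contributes nothing to any edge-normal derivative. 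Fixing a vertex $v$ and passing to coordinates adapted to $v$, an edge $e$ with tail $v$ contributes $\frac{\partial u_e}{\partial\theta}(r,0)=\alpha r^\alpha c_2(e)$ to the balancing sum, while an edge $e$ with head $v$ contributes $-\frac{\partial u_e}{\partial\theta}(r,\theta_0)=-\alpha r^\alpha c_2(e)$, the minus sign arising from reversal of the angular coordinate. Hence, after cancelling $\alpha r^\alpha$, balancing at $v$ reads $\sum_{\mathrm{tail}(e)=v}c_2(e)-\sum_{\mathrm{head}(e)=v}c_2(e)=0$, which is exactly $\partial c_2(v)=0$ (up to overall sign) for the map $\partial$ defined just before the theorem. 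Therefore $f$ is balanced if and only if $c_2\in\ker\partial$, with $\rho_0$ free.

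It then remains to count dimensions. The balanced homogeneous harmonic functions of degree $\alpha$ correspond under the isomorphism above to $\R\oplus\ker\partial$, so their space has dimension $1+\dim\ker\partial$. The operator $\partial:\R^{E(\Gamma)}\to\R^{V(\Gamma)}$ is the simplicial boundary map of the connected graph $\Gamma$; its image is the subspace of $0$-chains with zero coefficient sum, of dimension $\#V(\Gamma)-1$, so $\dim\ker\partial=\#E(\Gamma)-\#V(\Gamma)+1$ (the first Betti number of $\Gamma$). Adding the dimension contributed by $\rho_0$ gives $\#E(\Gamma)-\#V(\Gamma)+2$, as claimed. The only subtlety I anticipate is the careful bookkeeping of edge orientations and of the ``coordinates adapted to $v$'' convention in the balancing sum; once the sign of each edge's contribution is settled, the identification with $\partial$ and the dimension count are routine.
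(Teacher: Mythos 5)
Your proposal is correct and follows essentially the same route as the paper: parametrize degree-$\alpha$ homogeneous harmonic functions by $(\rho_0,c_2)$ via Lemma~\ref{constant singular determinism}, identify the balancing condition with $\partial c_2=0$, and compute $\dim\ker\partial=\#E(\Gamma)-\#V(\Gamma)+1$ from the fact that the image of the boundary operator on a connected graph is the zero-sum subspace of $\R^{V(\Gamma)}$. Your explicit verification of the sign bookkeeping showing that balancing at $v$ is exactly $\partial c_2(v)=0$ is a detail the paper asserts without computation, and is a welcome addition.
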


\begin{proof}
    According to Lemma~\ref{constant singular determinism} a homogeneous harmonic function is determined by $\rho_0\in\R$ and $c_2:E(\Gamma)\to\R$. The function $f$ is balanced according to Definition~\ref{euclidean balancing} if and only if $\partial c_2 = 0$. It thus remains to compute the dimension of the kernel of $\partial$, or equivalently $\#E(\Gamma)-dim(im\partial)$.
    
    The map $\partial$ turns out to be the boundary operator from homology, whose structure is well understood. For the sake of completeness we provide a description of its image here as the space of functions $g:V(\Gamma)\to\R$ with $\sum_{v\in V(\Gamma)}g(v)=0$. From the structure of $\partial$ it is clear that the image is contained in this space.
    
    Fix a vertex $v_1$, and a collection of paths $p_j$ from $v_1$ to $v_j$ for each other vertex $v_j$. Without loss of generality assume that the orientation along each path points from $v_1$ towards $v_j$. Let $\xi_j(e) = 1$ for each edge $e\in p_j$ and $0$ otherwise, so that
    \[
        \partial \xi_j(v) = \begin{cases}
            1, & v=v_j\\
            -1, & v=v_1\\
            0, & else.
        \end{cases}
    \]
    For any function $g:V(\Gamma)\to\R$ with $\sum_jg(v_j)=0$, let $\xi = \sum_{j\geq2}g(v_j)\xi_j$, so that
    \[
        \partial \xi(v_j) = \begin{cases}
            g(v_j), & j\geq2\\
            -\sum_{i\geq2}g(v_i), & j=1.
        \end{cases}
    \]
    Since $g$ satisfies $\sum_jg(v_j)=0$, we necessarily have $\partial \xi(v_1)=g(v_1)$ also.
    
    The image of $\partial$ thus has dimension $\#V(\Gamma)-1$, so its kernel has dimension $\#E(\Gamma)-\#V(\Gamma)+1$. Combining with the choice of $\rho_0$, the space of balanced homogeneous harmonic functions of degree $\alpha$ is $\#E(\Gamma)-\#V(\Gamma)+2$.
\end{proof}

\begin{theorem}\label{odd singular}
    Suppose $\theta(e)=\theta_0$ for all $e\in E(\Gamma)$ and $\alpha\theta_0=(2n+1)\pi$ for some $n\in\mathbb{Z}$, $n\geq0$. Then the space of balanced homogeneous harmonic functions $f:C(\Gamma,\theta)\to\R$ of degree $\alpha$ has dimension $\#E(\Gamma)-\#V(\Gamma)+2$ if $\Gamma$ contains no odd cycles, and dimension $\#E(\Gamma)-\#V(\Gamma)$ if $\Gamma$ does contain an odd cycle.
\end{theorem}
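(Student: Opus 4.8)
The plan is to follow the template of the proof of Theorem~\ref{even singular}, with the boundary operator $\partial$ replaced by the unsigned incidence operator $d$. Since $\alpha\theta_0/\pi = 2n+1$ is odd, Lemma~\ref{constant singular determinism} parametrizes the homogeneous harmonic functions $f:C(\Gamma,\theta)\to\R$ of degree $\alpha$: by a pair $(\rho_0,c_2)$ with $\rho_0\in\R$ and $c_2:E(\Gamma)\to\R$ when $\Gamma$ has no odd cycle, and by $c_2:E(\Gamma)\to\R$ alone when $\Gamma$ has an odd cycle, the assignment to $f$ being a linear isomorphism onto the space of homogeneous harmonic functions in each case. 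It therefore suffices to decide which parameters give a balanced $f$ and to count dimensions.

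The computation at the heart of the argument is the normal derivative of $f$ along an edge. Fix a vertex $v$ and an incident edge $e$; in the tail-adapted coordinates of Lemma~\ref{constant singular determinism}, $f$ is represented on $C(e)$ by $u_e(r,\theta)=r^\alpha\big(c_1(e)\cos(\alpha\theta)+c_2(e)\sin(\alpha\theta)\big)$. If $v$ is the tail of $e$ these are already the coordinates adapted to $v$ and $\frac{\partial u_e}{\partial\theta}(r,0)=\alpha r^\alpha c_2(e)$. If $v$ is the head of $e$, passing to the adapted angle $\varphi=\theta_0-\theta$ gives $\frac{\partial u_e}{\partial\varphi}(r,0)=\alpha r^\alpha\big(c_1(e)\sin(\alpha\theta_0)-c_2(e)\cos(\alpha\theta_0)\big)=\alpha r^\alpha c_2(e)$, using $\sin(\alpha\theta_0)=0$ and $\cos(\alpha\theta_0)=-1$. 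In both cases the contribution is $\alpha r^\alpha c_2(e)$, so Definition~\ref{euclidean balancing} says $f$ is balanced along $C(v)$ exactly when $\sum_{e\ni v}c_2(e)=0$, i.e.\ when $dc_2(v)=0$. Hence $f$ is balanced if and only if $dc_2=0$; note that the coefficients $c_1(e)$, and hence the parameter $\rho_0$, drop out entirely --- this is precisely where oddness of $\alpha\theta_0/\pi$ enters and why $d$ rather than $\partial$ is the relevant operator.

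It remains to compute $\dim\ker d$ for the connected graph $\Gamma$, which I would do via the transpose $d^T:\R^{V(\Gamma)}\to\R^{E(\Gamma)}$ given by $(d^Tg)(vw)=g(v)+g(w)$. An element of $\ker d^T$ satisfies $g(w)=-g(v)$ across every edge, and propagating this relation along paths --- exactly the argument used in the proof of Lemma~\ref{constant singular determinism} --- shows that $\ker d^T$ is spanned by the vector taking the values $\pm1$ on the two parts of $\Gamma$ when $\Gamma$ is bipartite (so $\dim\ker d^T=1$), while the presence of an odd cycle forces $g\equiv0$ (so $\dim\ker d^T=0$). By the rank-nullity theorem, $\operatorname{rank}d=\#V(\Gamma)-\dim\ker d^T$, hence $\dim\ker d=\#E(\Gamma)-\operatorname{rank}d$ equals $\#E(\Gamma)-\#V(\Gamma)+1$ when $\Gamma$ has no odd cycle and $\#E(\Gamma)-\#V(\Gamma)$ when it does.

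Assembling the count: when $\Gamma$ has no odd cycle the balanced functions are parametrized by $\R\oplus\ker d$, of dimension $1+\big(\#E(\Gamma)-\#V(\Gamma)+1\big)=\#E(\Gamma)-\#V(\Gamma)+2$; when $\Gamma$ has an odd cycle they are parametrized by $\ker d$, of dimension $\#E(\Gamma)-\#V(\Gamma)$. I expect the only genuine subtlety to be the sign bookkeeping in the normal-derivative step --- reconciling the head and tail cases and observing that the $\cos(\alpha\theta_0)$-coefficients vanish --- since everything after that is the same incidence-matrix linear algebra that appears in Theorem~\ref{even singular}, now signless rather than signed.
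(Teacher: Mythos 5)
Your proposal is correct and follows the same overall strategy as the paper: parametrize the degree-$\alpha$ homogeneous harmonic functions via Lemma~\ref{constant singular determinism}, show that balancing is equivalent to $dc_2=0$, and count $\dim\ker d$. Two points of (minor, beneficial) divergence are worth noting. First, you actually carry out the normal-derivative computation showing that the head and tail contributions of an edge both equal $\alpha r^\alpha c_2(e)$ (so the $c_1$ coefficients drop out and the balancing condition is exactly $dc_2=0$); the paper asserts this equivalence without computation, so your verification fills in a step the paper leaves implicit. Second, where the paper computes $\operatorname{rank} d$ by explicitly exhibiting the image of $d$ --- constructing alternating $\pm1$ assignments along odd closed walks in the odd-cycle case, and along odd-length paths between the two parts in the bipartite case --- you instead compute $\dim\ker d^T$ by propagating the relation $g(w)=-g(v)$ across edges and apply rank--nullity. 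Your route is shorter and avoids the somewhat fiddly path-selection bookkeeping in the paper's bipartite case, at the cost of not producing explicit generators of $\operatorname{im} d$ (which the paper reuses in spirit in Proposition~\ref{abundance}). Both arguments rely on the standing connectedness assumption on $\Gamma$, which you correctly invoke.
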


\begin{proof}
    According to Lemma~\ref{constant singular determinism} a homogeneous harmonic function is determined by $\rho_0\in\R$ and $c_2:E(\Gamma)\to\R$. In case $\Gamma$ contains an odd cycle, though, we must have $\rho_0=0$. The function $f$ is balanced according to Definition~\ref{euclidean balancing} if and only if $dc_2 = 0$. It thus remains to compute the dimension of the kernel of $d$, or equivalently $\#E(\Gamma)-dim(im d)$.
    
    In case $\Gamma$ contains an odd cycle we claim the image of the linear map $d$ consists of all functions $g:V(\Gamma)\to\R$. With the presence of a single odd cycle, one can find a cycle of odd length starting and ending at any particular vertex. For a vertex $v_j$, choose such a path and let $\xi_j(e)=\pm 1$ on the edges of the path in an alternating fashion, so that
    \[
        d\xi_j(v) = \begin{cases}
            2 & v=v_j,\\
            0 & else.
        \end{cases}
    \]
    Defining $\xi=\frac{1}{2}\sum_jg(v_j)\xi_j$, see that $d\xi = g$. In this case we had no choice of $\rho_0$, so the dimension of balanced homogeneous harmonic functions is just
    \[
        dim(ker d) = \#E(\Gamma)-dim(im d) = \#E(\Gamma)-\#V(\Gamma).
    \]
    
    If $\Gamma$ does not contain an odd cycle then the graph is bipartite. Say the parts of $\Gamma$ are $A,B\subset V(\Gamma)$, with $A\cup B=V(\Gamma)$, $A\cap B=\emptyset$, and every edge of $\Gamma$ has one endpoint in $A$ and one endpoint in $B$. Then we claim the image of $d$ consists of those functions $g:V(\Gamma)\to\R$ with
    \[
        \sum_{v\in A}g(v) = \sum_{v\in B}g(v).
    \]
    Assuming $G$ is connected, one can find odd-length paths $p$ from any vertex in $A$ to any vertex in $B$. If the path $p$ joins $v_1\in A$ and $v_2\in B$, then defining $\xi_p(e)=\pm1$ along $p$ in an alternating fashion gives
    \[
        d\xi_p(v) = \begin{cases} 1, & v=v_1\\1, & v=v_2\\0, & else.\end{cases}
    \]
    
    Start with $\xi\equiv 0$ and build up as follows. Enumerate the vertices in $A$ by $v_j$ and the vertices in $B$ by $w_j$, and without loss of generality $\#A=a\geq b=\#B$. First select odd-length paths $p_1,\ldots,p_{a-b}$ from $v_{b+1},\ldots,v_a$ to $w_1$ and add $g(v_j)\xi_{p_j}$ to $\xi$ for $b< j\leq a$. Then choose paths from $w_1$ to $v_1$, $v_1$ to $w_2$, $w_2$ to $v_2$, $\ldots$, $w_b$ to $v_b$. Adding appropriate multiples of the corresponding functions $\xi_p$ to $\xi$ will ensure that $d\xi(v) = g(v)$ for all $v$ except perhaps $v_a$. But the condition that $\sum_{v\in A}g(v) = \sum_{v\in B}g(v)$ will ensure that $d\xi(v_a)=g(v_a)$ as well.
    
    Thus in this case the image of $d$ has dimension $\#V(\Gamma)-1$, so its kernel has dimension $\#E(\Gamma)-\#V(\Gamma)+1$. Combining with the choice of $\rho_0$, the space of balanced homogeneous harmonic functions of degree $\alpha$ has dimension $\#E(\Gamma)-\#V(\Gamma)+2$.
\end{proof}

\begin{remark}\label{rem subdivide}
    Theorem~\ref{even singular} can be seen as a corollary of Theorem~\ref{odd singular} after subdividing edges. In fact, all that is really needed is the statement when $\alpha\theta_0=\pi$. If $\alpha\theta_0=k\pi$ simply subdivide each edge into $k$ pieces to reduce to the case $\alpha\theta_0=\pi$. Lemma~\ref{euclidean smooth} then implies a balanced function on the subdivided complex is smooth at the introduced vertices and is thus the restriction of a function on the un-subdivided complex.
\end{remark}

We now turn to the more complicated situation when $\theta:E(\Gamma)\to(0,\pi)$ is not constant but $\alpha$ is still a singular degree. Let $\Sigma\subset\Gamma$ be the subgraph consisting of all those edges $e\in E(\Gamma)$ with $\alpha\theta(e)\in\pi\mathbb{Z}$, along with their incident vertices. One can see, as in Lemma~\ref{constant singular determinism}, that a homogeneous harmonic function is determined by a function $\rho:V(\Gamma)\to\R$ that lies in the space $B(\alpha)$ described in the previous subsection, along with a function $c_2:E(\Sigma)\to\R$. Then one could describe which $\rho$ and $c_2$ describe a balanced function. But it is simpler to use the strategy described in Remark~\ref{rem subdivide}.

For each edge $e\in E(\Sigma)$, if $\alpha\theta(e)=k\pi$ then subdivide $e$ into $k$ edges, each with $\alpha\theta=\pi$. This turns the original complex $C(\Gamma,\theta)$ into an isometric complex (by abuse of notation also denoted $C(\Gamma,\theta)$) where each edge $e\in E(\Gamma)$ satisfies either $\alpha\theta(e)=\pi$ or $\alpha\theta(e)\not\in\pi\mathbb{Z}$. Now we can describe homogeneous harmonic functions.

\begin{lemma}\label{singular determinism}
    A homogeneous harmonic function $f:C(\Gamma,\theta)\to\R$ of singular degree $\alpha$ is uniquely determined by a function $c_2:E(\Sigma)\to\R$ together with a function $\rho:V(\Gamma)\to\R$ such that $\rho(v_i)=-\rho(v_j)$ for all $v_iv_j\in E(\Sigma)$.
\end{lemma}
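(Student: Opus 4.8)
The plan is to parametrize the homogeneous harmonic functions of degree $\alpha$ face by face, exactly as in the proof of Lemma~\ref{constant singular determinism}, taking advantage of the subdivision arranged just above the statement: every edge $e\in E(\Gamma)$ has either $\alpha\theta(e)=\pi$ (precisely when $e\in E(\Sigma)$) or $\alpha\theta(e)\notin\pi\mathbb{Z}$ (when $e\notin E(\Sigma)$).

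First, starting from a homogeneous harmonic $f$ of degree $\alpha$, I would set $\rho(v)=f(v,1)$ for each $v\in V(\Gamma)$; this is half of the data. For an edge $e=v_iv_j$, choose coordinates on the face $C(e)$ adapted to $v_i$ and use Lemma~\ref{triggy} to write $f\vert_{C(e)}$ as $r^\alpha\big(c_1(e)\cos(\alpha\theta)+c_2(e)\sin(\alpha\theta)\big)$, with boundary values $\rho(v_i)$ at $\theta=0$ and $\rho(v_j)$ at $\theta=\theta(e)$. If $e\notin E(\Sigma)$, then $\alpha\theta(e)\notin\pi\mathbb{Z}$ and Lemma~\ref{BVP} shows $f\vert_{C(e)}$ is completely determined by $\rho(v_i)$ and $\rho(v_j)$, so such an edge contributes no further parameter. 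If $e\in E(\Sigma)$, then $\cos(\alpha\theta(e))=-1$, so the two boundary conditions force $\rho(v_j)=-\rho(v_i)$ --- exactly the stated constraint on $\rho$ --- while the coefficient $c_2(e)$ of $\sin(\alpha\theta)$ is unconstrained; collecting these gives the second half of the data, $c_2:E(\Sigma)\to\R$. One small point to check here is that $c_2(e)$ does not depend on which endpoint of $e$ the coordinates are adapted to: reparametrizing by $\theta\mapsto\theta(e)-\theta$ and expanding with the angle-addition formulas, using $\alpha\theta(e)=\pi$, sends $\cos(\alpha\theta)\mapsto-\cos(\alpha\theta)$ and $\sin(\alpha\theta)\mapsto\sin(\alpha\theta)$, hence $c_1(e)\mapsto-c_1(e)$ (consistent with $\rho(v_i)\mapsto\rho(v_j)$) and $c_2(e)\mapsto c_2(e)$.

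Conversely, given $\rho:V(\Gamma)\to\R$ with $\rho(v_i)=-\rho(v_j)$ for every $v_iv_j\in E(\Sigma)$ together with $c_2:E(\Sigma)\to\R$, I would build $f$ face by face. On $C(e)$ with $e\notin E(\Sigma)$ take the unique homogeneous harmonic function on $S_{\theta(e)}$ with the prescribed boundary values, which exists by Lemma~\ref{BVP} since $\alpha\theta(e)\notin\pi\mathbb{Z}$. On $C(e)$ with $e=v_iv_j\in E(\Sigma)$, in coordinates adapted to $v_i$, take $r^\alpha\big(\rho(v_i)\cos(\alpha\theta)+c_2(e)\sin(\alpha\theta)\big)$; its boundary value at $v_j$ is $\rho(v_i)\cos\pi=-\rho(v_i)=\rho(v_j)$, so this is consistent, and by the previous paragraph it does not matter which endpoint we used. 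Distinct faces meet only along the rays $C(v)$, and in every incident face $f(v,t)=\rho(v)t^\alpha$, so $f$ is continuous over the edges and harmonic on each face; that is, it is a homogeneous harmonic function of degree $\alpha$.

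The two constructions $f\mapsto(\rho,c_2)$ and $(\rho,c_2)\mapsto f$ are manifestly inverse to each other, which yields the claimed bijection. The only genuinely delicate points are the bookkeeping after subdivision --- so that $E(\Sigma)$ is exactly the set of edges with $\alpha\theta=\pi$ and the constraints on $\rho$ propagate correctly along each subdivided edge --- and the verification that the free parameter $c_2(e)$ is well defined independently of orientation; everything else is a direct application of Lemmas~\ref{triggy} and~\ref{BVP}.
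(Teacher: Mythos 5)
Your proof is correct and follows essentially the same route as the paper's, which simply cites Lemmas~\ref{rho determines f} and~\ref{constant singular determinism} for the two types of faces; your face-by-face analysis and the explicit inverse construction fill in the same steps in more detail. The check that $c_2(e)$ is independent of the choice of adapted endpoint (since $\alpha\theta(e)=\pi$ sends $c_1\mapsto -c_1$ and fixes $c_2$ under $\theta\mapsto\theta(e)-\theta$) is a worthwhile verification the paper leaves implicit.
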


\begin{proof}
    Just as in Lemma~\ref{rho determines f}, the function $f$ is uniquely determined on those edges $e\in E(\Gamma)\backslash E(\Sigma)$ by the values $\rho(v)$ on the incident vertices. And just as in Lemma~\ref{constant singular determinism}, the condition $\rho_i=-\rho_j$ together with a choice of $c_{ij}=c_2(v_iv_j)$ on $v_iv_j\in E(\Sigma)$ uniquely determines $f$ in the corresponding face via the representation
    \[
        u(r,\theta) = r^\alpha\Big(\rho_i\cos(\alpha\theta) + c_{ij}\sin(\alpha\theta)\Big).
    \]
\end{proof}

Now the balancing condition is slightly more subtle. On vertices $v_i\in V(\Gamma)\backslash V(\Sigma)$ one must still have
\[
    \sum_{v_j\sim v_i}\frac{\rho_j-\cos(\alpha\theta_{ij})\rho_i}{\sin(\alpha\theta_{ij})} = 0,
\]
just as in Theorem~\ref{euclidean balanced}. Unfortunately these conditions alone seem unlikely to be satisfied for $\rho\not\equiv 0$, given the restrictions on $\rho$ coming from $\Sigma$. Certainly a necessary condition is that the quadratic form $Q(\rho)=\lim_{\alpha'\to\alpha}\rho\cdot\Delta_{\alpha'\theta}\rho$ on the space $B(\alpha)$ from the previous subsection should have a non-trivial kernel. Since the eigenvalues of $\Delta_{\alpha'\theta}$ are strictly increasing with $\alpha'$ the possibility that one passes through 0 precisely at $\alpha'=\alpha$ seems unlikely.

And on those vertices $v_i\in V(\Sigma)$ a combination of the formula from Theorem~\ref{euclidean balanced} and the linear map $d$ from above is necessary, which we'll define as $\sigma:\R^{\#E(\Sigma)}\to\R^{\#V(\Sigma)}$. The balancing condition says
\[
    dc_2(v_i) + \sum_{v_iv_j\in E(\Gamma)\backslash E(\Sigma)}\frac{\rho_j-\cos(\alpha\theta_{ij})\rho_i}{\sin(\alpha\theta_{ij})} = 0.
\]

\begin{proposition}\label{abundance}
    If $\Sigma$ has a connected component that is not a tree but contains no odd cycles, then there are non-trivial balanced homogeneous harmonic functions of degree $\alpha$.
\end{proposition}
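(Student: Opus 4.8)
The plan is to write down such a function explicitly, with vanishing $\rho$-part. After the edge subdivisions that precede Lemma~\ref{singular determinism}, every edge of $\Sigma$ has $\alpha\theta(e)=\pi$, and by that Lemma a homogeneous harmonic function $f\colon C(\Gamma,\theta)\to\R$ of degree $\alpha$ is determined by a pair $(\rho,c_2)$, where $\rho\colon V(\Gamma)\to\R$ satisfies $\rho_i=-\rho_j$ on every $v_iv_j\in E(\Sigma)$ and $c_2\colon E(\Sigma)\to\R$ is arbitrary. I would take $\rho\equiv 0$, which makes the sign constraint vacuous, and then choose $c_2$ so that the resulting $f$ is balanced and non-trivial.

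With $\rho\equiv 0$ the balancing conditions collapse. At a vertex $v_i\notin V(\Sigma)$ every incident edge lies in $E(\Gamma)\setminus E(\Sigma)$, and the sum $\sum_{v_j\sim v_i}\csc(\alpha\theta_{ij})\big(\rho_j-\cos(\alpha\theta_{ij})\rho_i\big)$ of Theorem~\ref{euclidean balanced} is identically $0$. At a vertex $v_i\in V(\Sigma)$ the balancing equation displayed just before the Proposition becomes $\sigma c_2(v_i)=0$, where $\sigma\colon\R^{\#E(\Sigma)}\to\R^{\#V(\Sigma)}$ is the \emph{unsigned} incidence operator $\sigma c_2(v)=\sum_{e\in E(\Sigma),\,v\in e}c_2(e)$; the key point is that for $e\in E(\Sigma)$ (so $\alpha\theta(e)=\pi$) the angular derivative $\partial u_e/\partial\theta$ equals $\alpha r^{\alpha}c_2(e)$ at \emph{both} endpoints of $e$ when computed in the coordinates adapted to each endpoint, as reflected in the definition of the map $d$ above. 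So, for $\rho\equiv 0$, the map $f$ is balanced if and only if $c_2\in\ker\sigma$; and $f$ is non-trivial as soon as $c_2\not\equiv 0$, because on a face $C(v_iv_j)$ with $v_iv_j\in E(\Sigma)$ the function is represented by $r^{\alpha}c_2(v_iv_j)\sin(\alpha\theta)$, which is not identically zero unless $c_2(v_iv_j)=0$.

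It remains to produce a non-zero element of $\ker\sigma$ supported on the bad component $K$. Since $K$ is connected and not a tree it contains a cycle, which has even length $2m$ because $K$ has no odd cycle; taking $c_2$ equal to $+1,-1,+1,-1,\dots$ around that cycle and $0$ on every other edge of $\Sigma$ gives $\sigma c_2=0$, since at each vertex the at most two cycle-edges through it carry opposite values, while $c_2\not\equiv 0$. Structurally this is an instance of the following fact: since $K$ is bipartite with parts $A,B$, one may orient every edge of $K$ from its end in $A$ to its end in $B$ and negate the coordinates indexed by $B$; this conjugates $\sigma|_K$, up to sign, to the simplicial boundary operator $\partial_1$ of $K$, so $\ker\sigma|_K=\ker\partial_1|_K$ is the cycle space $Z_1(K;\R)$, of dimension $\#E(K)-\#V(K)+1\geq 1$. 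Feeding $(\rho\equiv 0,\,c_2)$ back into Lemma~\ref{singular determinism} yields the desired non-trivial balanced homogeneous harmonic function of degree $\alpha$.

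No serious obstacle is expected: the only step needing care is the reduction of the balancing conditions under $\rho\equiv 0$, and in particular checking that it is the unsigned incidence operator $\sigma$ — not the signed boundary operator — that governs the vertices of $\Sigma$. This is the short computation of $\partial u_e/\partial\theta$ at the head of an $E(\Sigma)$-edge in head-adapted coordinates, using $\alpha\theta(e)=\pi$. Granting that, the Proposition is precisely the elementary statement that the unsigned incidence operator of a bipartite graph that is not a forest has non-trivial kernel.
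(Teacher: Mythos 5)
Your proof is correct and follows the same overall strategy as the paper's: take $\rho\equiv 0$, observe that the balancing conditions at vertices outside $V(\Sigma)$ become vacuous while at vertices of $\Sigma$ they reduce to $c_2\in\ker d$ (your unsigned incidence operator $\sigma$), and then exhibit a non-zero kernel element supported on the bad component. The only genuine difference is the last step: the paper deduces $\ker d\neq 0$ from the dimension count of $\mathrm{im}\,d$ already established in Theorem~\ref{odd singular} (image of dimension $\#V(\Sigma_0)-1$ against $\#E(\Sigma_0)\geq\#V(\Sigma_0)$), whereas you produce the kernel element explicitly as an alternating $\pm1$ assignment around an even simple cycle, which exists because the component is not a tree and has no odd cycles. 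Both arguments are valid; yours is more self-contained and makes the identification of $\ker\sigma$ with the cycle space of the bipartite component transparent, while the paper's simply reuses machinery it has already built. Your care in checking that the relevant operator at $E(\Sigma)$-edges is the unsigned $d$ rather than the signed boundary $\partial$ matches the paper's setup for odd multiples of $\pi$.
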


\begin{proof}
    To produce a non-trivial balanced homogeneous harmonic function of a singular degree $\alpha$ it is enough, according to Lemma~\ref{singular determinism}, to specify $\rho$ and $c_2:E(\Sigma)\to\R$. We will choose $\rho\equiv 0$, and if $\Sigma_0\subset\Sigma$ is a connected component that is not a tree but contains no odd cycles we set $c_2=0$ on $E(\Sigma)\backslash E(\Sigma_0)$.
    
    We will construct a balanced function by specifying $c_2\not\equiv0$ on $E(\Sigma_0)$ in the kernel of $d$. To prove that such a $c_2$ exists, it suffices to show that the image of $d$ when restricted to functions $c_2:E(\Sigma_0)\to\R$ has dimension at most $\#E(\Sigma_0)-1$. But this is precisely the content of Theorem~\ref{odd singular}; in the absense of odd cycles the image of $d$ acting on $\R^{\#E(\Sigma_0)}$ has dimension $\#V(\Sigma_0)-1$. As $\Sigma_0$ is not a tree, $\#E(\Sigma_0)\geq\#V(\Sigma_0)$, so $d$ has a kernel and a balanced homogeneous harmonic function exists.
\end{proof}

\subsection{Eigenvalue problems on graphs}\label{S eigenvalues}

The balanced homogeneous harmonic functions of this section can be reinterpreted in terms of an eigenvalue problem on the graph $\Gamma$. Returning to the notation of Definition~\ref{homogeneous}, write $f:C(\Gamma,\theta)\to\R$ as
\[ 
    f(x,t) = \rho(x)t^\alpha.
\]
Here $x$ is a point on $\Gamma$ and $t\in\hl$. On each edge $e$ determine coordinates by a map $\psi_e:e\to[0,\theta(e)]\subset\R$, and define $\rho_e:[0,\theta(e)]\to\R$ by
\[
    \rho_e(s) = \rho\circ\psi_e^{-1}(s).
\]

Now the original function $f:C(\Gamma,\theta)\to\R$ is harmonic if and only if each $\rho_e$ is an eigenfunction of the second derivative operator, namely $\rho''_e=-\alpha^2\rho_e$. And the original function $f$ is balanced if and only if, for each $v\in V(\Gamma)$,
\[
    \sum_{v\in e}(-1)^{\psi_e(v)/\theta(e)}\rho_e'(\psi_e(v)) = 0.
\]
Thus a balanced homogeneous harmonic function $f$ corresponds to a solution of the eigenvalue problem $\rho''_e=-\alpha^2\rho_e$ for all $e\in E(\Gamma)$ with the boundary conditions stated above at each $v\in V(\gamma)$.

The functions $\rho:\Gamma\to\R$ can also be studied variationally. Define the energy $E[\rho]$ and $L^2$ norm $\norm{\rho}^2$ as follows.
\[
    E[\rho] = \sum_{e\in E(\Gamma)} \int_0^{\theta(e)}\abs{\rho'_e(s)}^2ds \qquad\text{and}\qquad \norm{\rho}^2 = \sum_{e\in E(\Gamma)}\int_0^{\theta(e)} \abs{\rho_e(s)}^2ds.
\]
The main result of this subsection relates the eigenvalue problem described above to a Rayleigh quotient involving these two functions.

\begin{theorem}\label{rayleigh}
    The solutions $\rho$ to the eigenvalue problem $\rho''_e=-\alpha^2\rho_e$ for all $e\in E(\Gamma)$ with boundary conditions coming from the balancing condition on each $v\in V(\gamma)$ are precisely the critical points of the Rayleigh quotient
    \[
        R[\rho] = \frac{E[\rho]}{\norm{\rho}^2}.
    \]
\end{theorem}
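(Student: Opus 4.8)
The plan is to run a standard first-variation computation for the Rayleigh quotient, the only delicate point being the bookkeeping of the boundary terms at the vertices of $\Gamma$. I would work in the space $\mathcal{H}$ of functions $\rho\colon\Gamma\to\R$ that are continuous on $\Gamma$ and lie in $H^1$ on each edge, so that $E[\rho]$ and $\norm{\rho}^2$ are finite; throughout one assumes $\rho\not\equiv0$ so that $R[\rho]$ is defined. A function $\rho\in\mathcal{H}$ is a critical point of $R$ exactly when $\tfrac{d}{dt}\big|_{t=0}R[\rho+t\eta]=0$ for every $\eta\in\mathcal{H}$, and the quotient rule turns this into the condition
\[
    \delta E[\rho](\eta) = R[\rho]\,\delta\norm{\cdot}^2[\rho](\eta)\qquad\text{for all }\eta\in\mathcal{H},
\]
where $\delta E[\rho](\eta)=2\sum_{e}\int_0^{\theta(e)}\rho_e'\eta_e'\,ds$ and $\delta\norm{\cdot}^2[\rho](\eta)=2\sum_e\int_0^{\theta(e)}\rho_e\eta_e\,ds$. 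Set $\lambda=R[\rho]$.

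The second step is to integrate by parts on each edge: $\int_0^{\theta(e)}\rho_e'\eta_e'\,ds=\big[\rho_e'\eta_e\big]_0^{\theta(e)}-\int_0^{\theta(e)}\rho_e''\eta_e\,ds$. Substituting into the critical-point equation and dividing by $2$ yields
\[
    -\sum_e\int_0^{\theta(e)}\big(\rho_e''+\lambda\rho_e\big)\eta_e\,ds + \sum_e\big[\rho_e'\eta_e\big]_0^{\theta(e)} = 0.
\]
Since $\eta$ is continuous on $\Gamma$, at each vertex $v$ the values $\eta_e(\psi_e(v))$ all coincide with a single number $\eta(v)$, so regrouping the boundary terms by vertex rewrites the last sum as $-\sum_{v\in V(\Gamma)}\eta(v)\sum_{v\in e}(-1)^{\psi_e(v)/\theta(e)}\rho_e'(\psi_e(v))$; the factor $(-1)^{\psi_e(v)/\theta(e)}$ records whether $v$ is the $0$-end or the $\theta(e)$-end of the coordinate $\psi_e$, and this is precisely the combination appearing in the balancing condition.

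The third step is localization. Testing against $\eta$ supported in the interior of a single edge kills the boundary sum and forces $\rho_e''+\lambda\rho_e=0$ there; doing this on every edge gives the eigenvalue equation $\rho_e''=-\alpha^2\rho_e$ with $\alpha^2=\lambda$. With the interior integrals gone, the remaining identity $\sum_v\eta(v)\sum_{v\in e}(-1)^{\psi_e(v)/\theta(e)}\rho_e'(\psi_e(v))=0$ must hold for all $\eta\in\mathcal{H}$; choosing $\eta$ equal to $1$ at a prescribed vertex and $0$ at all others (for instance affine on each edge) isolates the balancing condition at that vertex. Conversely, if $\rho$ solves $\rho_e''=-\mu\rho_e$ on every edge together with the balancing conditions, then reversing the integration by parts gives $\delta E[\rho](\eta)=\mu\,\delta\norm{\cdot}^2[\rho](\eta)$ for all $\eta$; taking $\eta=\rho$ yields $2E[\rho]=2\mu\norm{\rho}^2$, hence $\mu=R[\rho]$, so $\rho$ satisfies the displayed critical-point equation.

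I expect the only real work to be the vertex bookkeeping in the second step --- matching the signs of the boundary terms produced by integration by parts with the $(-1)^{\psi_e(v)/\theta(e)}$ factors in the stated balancing condition --- together with the observation that $\mathcal{H}$ contains enough test functions (supported in a single edge, or prescribed at a single vertex) to carry out the localization. Neither is a genuine difficulty; everything else is the routine quotient-rule computation.
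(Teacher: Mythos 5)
Your proposal is correct and follows essentially the same route as the paper: a first-variation computation of the Rayleigh quotient, integration by parts on each edge, and regrouping of the boundary terms vertex-by-vertex so that the $(-1)^{\psi_e(v)/\theta(e)}$ signs reproduce the balancing condition. The extra details you supply --- localizing with test functions supported on a single edge or prescribed at a single vertex, and the converse identification $\mu=R[\rho]$ by testing against $\rho$ itself --- are standard and only make explicit what the paper leaves implicit.
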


\begin{proof}
    First compute the first variation formula for the Rayleigh quotient. If $\rho_t$ is a variation of maps with $\rho=\rho_0$ and $\dot{\rho} = \frac{\partial}{\partial t}\vert_{t=0}\rho_t$, compute
    \begin{align*}
        \frac{\partial}{\partial t}\vert_{t=0}R[\rho_t] & = \frac{\partial}{\partial t}\vert_{t=0}\frac{E[\rho_t]}{\norm{\rho_t}^2}\\
         & = \frac{2}{\norm{\rho}^4}\Bigg(\norm{\rho}^2\sum_e\int_0^{\theta(e)}\rho'_e(s)\dot{\rho}'_e(s)ds - E[\rho]\sum_e\int_0^{\theta(e)}\rho_e(s)\dot{\rho}_e(s)ds\Bigg)\\
         & = \frac{2}{\norm{\rho}^4}\sum_e\Bigg(\norm{\rho}^2\Big[\rho'_e(s)\dot{\rho}_e(s)\Big]_0^{\theta(e)} - \int_0^{\theta(e)}\dot{\rho}_e(s)\Big(\norm{\rho}^2\rho''_e(s) + E[\rho]\rho_e(s)\Big)ds\Bigg).
    \end{align*}
    
    Rewriting the boundary term we find
    \[
        \sum_e \big(\rho'_e(\theta(e))\dot{\rho}_e(\theta(e)) - \rho'_e(0)\dot{\rho}_e(0)\big) = \sum_v\dot{\rho}(v)\sum_{v\in e}(-1)^{\psi_e(v)/\theta(e)}\rho'_e(\psi_e(v)).
    \]
    Thus $\rho$ is a critical point for the Rayleigh quotient if and only if
    \[
        \rho''_e = -R[\rho]\rho_e\text{ for all }e\in E(\Gamma)\qquad\text{and}\qquad \sum_{v\in e}(-1)^{\psi_e(v)/\theta(e)}\rho'_e(\psi_e(v)) = 0\text{ for all }v\in V(\Gamma).
    \]
    That is, $\rho$ is an eigenfunction of eigenvalue $-\alpha^2=-R[\rho]$ and satisfies the balancing boundary conditions.
\end{proof}

\section{Balanced harmonic maps into $k$-pods}\label{S k-pod target}

After subdividing the domain if necessary a homogeneous map $f:C(\Gamma,\theta)\to C(k)$ maps each face $C(e)\subset C(\Gamma,\theta)$ to a single edge $C(v_k)\subset C(k)$. The map $f$ is harmonic if its restriction to each face, when viewed in coordinates, is a harmonic function $u:S_{\theta(e)}\to\hl$. According to Proposition~\ref{non-neg} such a map exists if and only if $0<\alpha\theta(e)\leq\pi$ for each edge $e\in E(\Gamma)$.

The balancing condition of Definition~\ref{1-d balancing} imposes further restrictions on which degrees $\alpha$ can occur for harmonic maps from smooth cones into $k$-pods. In fact the degree $\alpha$ and the geometry of the cone are closely related.

\begin{theorem}\label{1-d harmonic}
    A balanced homogeneous harmonic map $f:C(C_{n'},\theta')\to C(k)$ of degree $\alpha$ exists if an only if the cone is a subdivision of $C(C_n,\theta)$ where $\theta(e)=\theta_0$ for each edge $e\in E(C_n)$, and
    \[
        \alpha\theta_0=\pi.
    \]
\end{theorem}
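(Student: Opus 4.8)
The plan is to collapse the whole question onto the behavior of the single radial profile of $f$ along the link circle, and then to use the non-negativity forced by the $k$-pod target in an essential way. After the standard subdivision described just before Definition~\ref{1-d balancing}, we may assume $f=(y(x),s(x)t^{\alpha})$ sends each face $C(e)$ of $C(C_{n'},\theta')$ into a single edge of $C(k)$ and that $s$ vanishes only at vertices. Let $L=\sum_{e}\theta'(e)$ be the total cone angle, parametrize the link circle by arc length $\varphi\in\R/L\mathbb{Z}$, and let $m(\varphi)=\abs{f(\cdot,1)}\ge 0$ be the distance of the image from the vertex of $C(k)$, so that $f$ is homogeneous with $\abs{f}(r,\varphi)=r^{\alpha}m(\varphi)$. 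By Lemma~\ref{triggy}, harmonicity of $f$ on each face says $m''+\alpha^{2}m=0$ on the interior of that face. Call a vertex $v$ a \emph{crossing} if $s(v)=0$ (so $f(v,1)$ is the vertex of $C(k)$ and $m(v)=0$) and \emph{interior} otherwise.

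The first real step is to rewrite Definition~\ref{1-d balancing} in terms of $m$. Writing out coordinates adapted to a vertex $v$ sitting between the face ``behind'' it ($\varphi<\varphi_{v}$) and the face ``ahead'' of it ($\varphi>\varphi_{v}$), the quantities $\partial u_{1}/\partial\theta(r,0)$ and $\partial u_{2}/\partial\theta(r,0)$ become $-r^{\alpha}m'(\varphi_{v}^{-})$ and $r^{\alpha}m'(\varphi_{v}^{+})$. Hence at an interior vertex the condition $\partial u_{1}/\partial\theta+\partial u_{2}/\partial\theta=0$ becomes $m'(\varphi_{v}^{-})=m'(\varphi_{v}^{+})$, i.e.\ $m$ is $C^{1}$ there (this is exactly Lemma~\ref{k-pod smooth} when $s(v)\ne 0$), while at a crossing the condition $\partial u_{1}/\partial\theta=\partial u_{2}/\partial\theta$ becomes $m'(\varphi_{v}^{+})=-m'(\varphi_{v}^{-})$. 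Since $m$ is a $C^{1}$ solution of $m''+\alpha^{2}m=0$ across every interior vertex, uniqueness for this ODE makes $m$ a single genuine solution on each maximal arc between two consecutive crossings; as it vanishes at the two crossing endpoints of that arc, writing the crossings cyclically as $\varphi_{1}<\dots<\varphi_{N}$ and $\Theta_{j}=\varphi_{j+1}-\varphi_{j}$, we get $m(\varphi)=c_{j}\sin\!\big(\alpha(\varphi-\varphi_{j})\big)$ on the $j$-th arc.

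Now the case analysis. If there is no crossing, $m$ is a globally $C^{1}$, hence smooth, $L$-periodic solution of $m''+\alpha^{2}m=0$; such a solution is either $\equiv 0$ or a nonzero trigonometric function of frequency $\alpha$ (forcing $\alpha L\in 2\pi\mathbb{Z}_{>0}$), and a nonzero one has mean zero over the circle, hence takes negative values, contradicting $m\ge 0$. So $m\equiv 0$ and $f$ is trivial. Therefore $N\ge 1$, and $\sum_{j}\Theta_{j}=L$. Vanishing of $m$ at $\varphi_{j+1}$ forces $\alpha\Theta_{j}\in\pi\mathbb{Z}_{>0}$; the crossing relation $m'(\varphi_{j}^{+})=-m'(\varphi_{j}^{-})$ reads $c_{j}=-c_{j-1}\cos(\alpha\Theta_{j-1})$, so $\abs{c_{j}}=\abs{c_{j-1}}$ for all $j$; were this common value $0$ then $f$ would be trivial, so every $c_{j}\ne 0$, and then $m\ge 0$ on the $j$-th arc forces $c_{j}>0$ and $\alpha\Theta_{j}=\pi$. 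Thus every arc has length $\theta_{0}:=\pi/\alpha$, and un-subdividing each arc into a single edge presents $C(C_{n'},\theta')$ as a subdivision of $C(C_{N},\theta)$ with $\theta(e)=\theta_{0}$ throughout and $\alpha\theta_{0}=\pi$. For the converse, given such a structure, color the faces of $C(C_{N},\theta)$ by edges of $C(k)$ so that faces sharing a vertex receive different edges (possible once $k\ge 3$, or $k\ge 2$ when $N$ is even), and set $f=r^{\alpha}\sin(\alpha\psi)$ in coordinates adapted to either endpoint of each face: this is harmonic on each face by Lemma~\ref{triggy}, has image in $\hl$ by Proposition~\ref{non-neg} because $\alpha\theta_{0}=\pi$, is continuous because every vertex maps to the vertex of $C(k)$, and is balanced because at each vertex the two adapted profiles are both $\sin(\alpha\psi)$; pulling it back through the subdivision isometry produces the required map on $C(C_{n'},\theta')$.

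The step I expect to be the main obstacle is the second paragraph: getting the signs in the two balancing conditions exactly right and recognizing that they say precisely ``$m$ is $C^{1}$ at interior vertices'' and ``$m'$ is reflected across crossings.'' Everything after that is elementary ODE bookkeeping, but it leans twice on the non-negativity of $m$ --- once to kill the crossing-free case and once to promote $\alpha\Theta_{j}\in\pi\mathbb{Z}$ to $\alpha\Theta_{j}=\pi$ --- and this is exactly the point where the $k$-pod target, rather than a Euclidean one, is doing the work.
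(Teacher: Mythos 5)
Your proposal is correct and follows essentially the same route as the paper: both arguments reduce to the ODE $m''+\alpha^2m=0$ for the radial profile along the link, interpret the two balancing conditions as $C^1$-matching at interior vertices and reflection of $m'$ at crossings, propagate the sine solution around the circle, and use non-negativity of $m$ to force each arc between consecutive crossings to have angle exactly $\pi/\alpha$. Your treatment is marginally more complete in that you explicitly dispose of the crossing-free case (the paper tacitly assumes some vertex maps to the vertex of $C(k)$) and you note the coloring constraint needed for the converse construction to respect the standing assumption that adjacent faces map to distinct edges of $C(k)$.
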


\begin{proof}
    First suppose that there is a balanced homogeneous harmonic map $f:C(C_{n'},\theta')\to C(k)$ and $f(v_0,1)=0$ is the vertex of $C(k)$ for some fixed $v_0\in V(C_{n'})$. Enumerate the vertices of $C_{n'}$ in order as $\{v_0,v_2,\ldots,v_{n'-1}\}$, so $v_i\sim v_{(i+1)\pmod{n}}$ for each $i$. In each face $C(v_iv_{i+1})\subset C(C_{n'},\theta')$ take coordinates adapted to $v_i$ and represent $f$ in those coordinates by a harmonic function $u_i:S_{\theta(v_iv_{i+1})}\to\hl$.
    
    Since $f(v_0,1)=0$ is the vertex of the $k$-pod $C(k)$ we must have $\rho(v_0)=0$. The map $u_0$ that represents $f$ in $C(v_0v_1)$ thus has the form
    \[
        u_0(r,\theta) = c_1r^\alpha\sin(\alpha\theta).
    \]
    If $\alpha\theta(v_0v_1) = \pi$ then $u_0(r,\theta(v_0v_1)) = 0$ as well. Otherwise $u_0(r,\theta(v_0v_1))\neq0$ and according to Lemma~\ref{k-pod smooth} the map $u_1$ must have the form
    \[
        u_1(r,\theta) = c_1r^\alpha\sin\big(\alpha\theta+\alpha\theta(v_0v_1)\big).
    \]
    Continuing inductively in this fashion, as long as $u_{i-1}(r,\theta(v_{i-1}v_i))\neq 0$ Lemma~\ref{k-pod smooth} says the map $u_i$ representing $f$ in $C(v_iv_{i+1})$ must have the form
    \[
        u_i(r,\theta) = c_1r^\alpha\sin\big(\alpha\theta+\alpha\theta(v_0v_1) + \cdots + \alpha\theta(v_{i-1}v_i)\big).
    \]
    
    Since $f$ can only map an edge of $C(C_{n'},\theta')$ to the vertex of $C(k)$, some $u_m$ must vanish at $(r,\theta(v_mv_{m+1}))$. In this case we have
    \[
        \alpha\sum_{i=1}^r \theta(v_iv_{i+1}) = \pi.
    \]
    We can ``un-subdivide" the first $r$ edges of $c_{n'}$ to form an edge of a new graph with $\alpha\theta=\pi$ on that edge. Repeating the same argument starting at $v_{r+1}$, and at each successive vertex of $C_{n'}$ that $f$ maps to the vertex of $C(k)$, one sees that $C(C_{n'},\theta')$ is a subdivision of some $C(C_n,\theta)$ with $\alpha\theta(e)=\pi$ for each $e\in E(C_n)$.
    
    It remains to see that a balanced homogeneous harmonic map of degree $\alpha$ does exist from $C(C_n,\theta)$ when $\alpha\theta(e)=\pi$ for each $e\in E(C_n)$. Let $\theta_0 = \pi/\alpha$ be the common angle measure in each face of $C(C_n,\theta)$. In each face of $C(C_n,\theta)$ let $f$ be represented in coordinates by the function
    \[
        u(r,\theta) = cr^\alpha\sin(\alpha\theta).
    \]
    Then $f$ will map \emph{each edge} of $C(C_n,\theta)$ into the vertex of $C(k)$. And the balancing condition is satisfied precisely because the constant $c$ in each face is the same.
\end{proof}

\begin{remark}
    The process of ``un-subdividing" described in this proof may result in a cone $C(C_n,\theta)$ with $\theta(e)\geq\pi$. This is not consistent with Definition~\ref{metric} but it causes only notational difficulty. In particular, if $\theta(e)=\theta_0\geq\pi$ one can identify the face $C(e)$ with the upper half-plane endowed with a different metric. Indeed, let
    \[
        S = \{(x,y)\in\R^2 \vert y\geq 0\} \quad\text{with metric}\quad ds^2 = dr^2 + \frac{\theta_0^2r^2}{\pi^2}d\theta^2.
    \]
    This $S$ has an angle of $\theta_0$ at the origin, and the harmonic functions of degree $\alpha$ defined on $S$ have the form
    \[
        u(r,\theta) = r^\alpha\big(c_1\cos(\theta)+c_2\sin(\theta)\big).
    \]
    In order that $u$ stays positive one must have $c_1=0$ and $c_2>0$, which is consistent with the functions described in the above proof.
\end{remark}

\begin{remark}\label{max princ}
    According to our assumption from Section~\ref{S 1-d balancing}, if $v\in V(C_n)$ has incident edges $e_1$ and $e_2$, and $f(v,1)$ is the vertex of $C(k)$, then the images of $f\vert_{C(e_1)}$ and $f\vert_{C(e_2)}$ should be distinct edges of $C(k)$. This is a common assumption in applications, and is consistent with the maximum principle for harmonic maps that would prohibit a local minimum value of $0$ along an edge. With this additional restriction, the number $n$ from the above proof must be at least 2, or at least $3$ if $f$ takes advantage of the singular nature of $C(k)$.
\end{remark}

The application we list here was established in \cite{gromov-schoen}.

\begin{corollary}
    Any homogeneous harmonic map $f:\R^2\to C(k)$ into a $k$-pod either has degree $1$ or degree $\alpha\geq 3/2$. Moreover if $C(C_n,\theta)$ is a smooth cone whose vertex is positively curved, then any homogeneous harmonic map $f:C(C_n,\theta)\to C(k)$ has degree $\alpha>1$, and if the total angle of $C(C_n,\theta)$ is at most $3\pi$ then any map that visits at least three edges of $C(k)$ has degree $\alpha\geq1$.
\end{corollary}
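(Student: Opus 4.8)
The plan is to derive all three assertions from Theorem~\ref{1-d harmonic} by pure arithmetic, using Remark~\ref{max princ} to bound the number of faces of the domain. By Theorem~\ref{1-d harmonic}, any balanced homogeneous harmonic map of degree $\alpha$ from a smooth cone into $C(k)$ forces the domain to be a subdivision of some $C(C_n,\theta)$ all of whose faces have a common angle $\theta_0$ with $\alpha\theta_0=\pi$. Since subdivision does not change the isometry type of the cone, the total angle $A$ at the vertex is unchanged and satisfies $A=n\theta_0$. Eliminating $\theta_0$ from these two relations yields the single identity
\[
    \alpha=\frac{\pi n}{A},
\]
so everything reduces to knowing $n$. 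Remark~\ref{max princ} supplies $n\geq2$ in all cases, and $n\geq3$ as soon as $f$ visits at least three edges of $C(k)$ (equivalently, takes advantage of the branching), since $n$ consecutive faces map onto at most $n$ distinct edges.

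First I would treat $f:\mathbb{R}^2\to C(k)$, where $A=2\pi$ and hence $\alpha=n/2$: if $n=2$ this is $\alpha=1$, and if $n\geq3$ this is $\alpha\geq3/2$, which exhausts the possibilities and gives the first claim. Next, for a positively curved smooth cone one has $A<2\pi$ by definition, so $\alpha=\pi n/A>n/2\geq1$, giving $\alpha>1$. Finally, a map visiting at least three edges of $C(k)$ has $n\geq3$, and then $A\leq3\pi$ forces $\alpha=\pi n/A\geq n/3\geq1$.

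I do not expect a serious obstacle here: the substantive content is already carried by Theorem~\ref{1-d harmonic}, and what remains is bookkeeping. The points deserving care are: (i) confirming that the homogeneous harmonic maps into $k$-pods relevant in \cite{gromov-schoen} — the tangent maps of energy-minimizing maps — do satisfy the balancing condition of Definition~\ref{1-d balancing}, so that Theorem~\ref{1-d harmonic} applies; (ii) noting that the ``un-subdivided'' cone $C(C_n,\theta)$ may have $\theta_0\geq\pi$, which by the remark following Theorem~\ref{1-d harmonic} is only a notational nuisance and leaves the identity $\alpha=\pi n/A$ intact; and (iii) dismissing the degenerate cases, namely that a nontrivial map whose image meets only one edge of $C(k)$ would restrict to a nontrivial non-negative homogeneous harmonic function on a closed cone, which cannot occur because $\cos(\alpha\theta)$ changes sign (cf.\ Proposition~\ref{non-neg}), while the standing assumption $y(e_1)\neq y(e_2)$ at a vertex mapped to the cone point is precisely what rules out $n=1$ and secures $n\geq2$.
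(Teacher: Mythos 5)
Your proposal is correct and follows essentially the same route as the paper: both reduce everything to the identity $\alpha\cdot(\text{total angle})=n\pi$ coming from Theorem~\ref{1-d harmonic}, invoke Remark~\ref{max princ} for the bounds $n\geq2$ (resp.\ $n\geq3$), and finish with the same case-by-case arithmetic. The extra caveats you flag (balancedness of the Gromov--Schoen tangent maps, angles $\theta_0\geq\pi$ after un-subdividing) are handled in the paper exactly as you anticipate.
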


\begin{proof}
    We begin by remarking that $\R^2$ is isometric to $C(C_{n'},\theta')$ whenever
    \[
        \sum_{e\in E(C_{n'})}\theta'(e) = 2\pi.
    \]
    According to Theorem~\ref{1-d harmonic}, the degree $\alpha$ of a homogeneous harmonic map $f:C(C_n,\theta)\to C(k)$ must satisfy
    \[
        \alpha\sum_{e\in E(C_{n'})}\theta'(e) = n\pi,
    \]
    where $n$ is the number of edges of $C(k)$ visited by $f$ counted with multiplicity. Following Remark~\ref{max princ} we must have $n\geq 2$, or $n\geq3$ if $f$ visits at least 3 edges of $C(k)$.
    
    If the vertex of $C(C_{n'},\theta')$ is flat, this means $\alpha = n/2$. When $n=2$ we have $\alpha=1$, and when $n\geq3$ we have $\alpha\geq3/2$.
    
    If the vertex of $C(C_{n'},\theta')$ is positively curved then $\sum_e\theta'(e)<2\pi$ so
    \[
        \alpha = \frac{n\pi}{\sum_{e\in E(C_{n'})}\theta'(e)} > n/2.
    \]
    Again by Remark~\ref{max princ} $n\geq2$ so that $\alpha>1$.
    
    And in fact if
    \[
        \sum_{e\in E(C_{n'})}\theta'(e) \leq3\pi\qquad\text{and}\qquad n\geq 3,
    \]
     then
     \[
        \alpha = \frac{n\pi}{\sum\theta'(e)} \geq 1.
     \]
\end{proof}

\subsection{$p$-harmonic maps}

Just as in the previous subsection, we consider $p$-harmonic maps $f:C(C_n,\theta)\to C(k)$. The basic object of study is analogous to the non-negative harmonic functions described in Proposition~\ref{non-neg}. The existence of homogeneous $p$-harmonic functions with Dirichlet boundary conditions in a plane sector were studied in \cite{tolksdorf} and later in \cite{veron-porretta}. Akman, Lewis, and Vogel describe in \cite{akman-lewis-vogel} the relationship between the sector $S_{\theta_0}$ and the degree $\alpha$ of homogeneity of $p$-harmonic functions described in \cite{tolksdorf} and \cite{veron-porretta}.

Translating the description of \cite{akman-lewis-vogel} into the present notation, for each $p>1$ and each angle $0<\theta_0<2\pi$ there is a unique degree $\alpha>0$ and a unique (up to scaling) homogeneous $p$-harmonic function $\phi_\alpha:S_{\theta_0}\to\hl$ of degree $\alpha$ with $\phi_\alpha(r,0) = \phi_\alpha(r,\theta_0) = 0$. (There is also a unique degree $\alpha<0)$ with an associated $p$-harmonic function, but we will not consider that here). The angle $\theta_0$ and the degree $\alpha$ are related by the equation
\[
    1-\frac{\theta_0}{\pi} = \frac{\alpha-1}{\sqrt{\alpha^2 + \frac{2-p}{p-1}\alpha}}.
\]

\begin{lemma}\label{p decreasing}
    The relationship between $\theta_0$ and $\alpha$ is monotone. That is, if $\theta_0<\theta_1$ support homogeneous $p$-harmonic functions of degrees $\alpha_0$ and $\alpha_1$, respectively, then $\alpha_0>\alpha_1$.
\end{lemma}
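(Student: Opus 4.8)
The plan is to read the displayed relation as expressing $1-\theta_0/\pi$ as an explicit function of $\alpha$, and to prove that this function is strictly increasing. Since $\theta_0\mapsto 1-\theta_0/\pi$ is strictly decreasing, monotonicity of this function is equivalent to $\alpha$ being a strictly decreasing function of $\theta_0$, which is precisely the assertion: if $\theta_0<\theta_1$ have associated degrees $\alpha_0,\alpha_1$, then $1-\theta_0/\pi>1-\theta_1/\pi$, and strict monotonicity forces $\alpha_0>\alpha_1$.

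Concretely, I would set $c=\frac{2-p}{p-1}$ (so $c=0$ when $p=2$, $c>0$ when $1<p<2$, and $c<0$ when $p>2$) and define
\[
    g(\alpha)=\frac{\alpha-1}{\sqrt{\alpha^2+c\alpha}}
\]
on the domain $D=\{\alpha>0:\alpha^2+c\alpha>0\}$, so that the relation in the statement reads $1-\theta_0/\pi=g(\alpha)$. First I would differentiate. A direct computation, using $\frac{d}{d\alpha}(\alpha^2+c\alpha)^{-1/2}=-\tfrac12(2\alpha+c)(\alpha^2+c\alpha)^{-3/2}$ and then simplifying the bracket with the identities $1+\tfrac{c}{2}=\tfrac{p}{2(p-1)}$ and $\tfrac{c}{2}=\tfrac{2-p}{2(p-1)}$, gives
\[
    g'(\alpha)=\frac{(\alpha^2+c\alpha)-\tfrac12(\alpha-1)(2\alpha+c)}{(\alpha^2+c\alpha)^{3/2}}=\frac{p\alpha+2-p}{2(p-1)\,(\alpha^2+c\alpha)^{3/2}}.
\]

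It then remains to check that the numerator $p\alpha+2-p$ is positive on $D$; since $p>1$ the denominator is positive, so this suffices. If $p\leq 2$ then $p\alpha+2-p\geq p\alpha>0$ for every $\alpha>0$. If $p>2$ then $c<0$, so the requirement $\alpha^2+c\alpha=\alpha(\alpha+c)>0$ with $\alpha>0$ forces $\alpha>-c=\frac{p-2}{p-1}$; and since $\frac{p-2}{p-1}>\frac{p-2}{p}$ this yields $\alpha>\frac{p-2}{p}$, i.e. $p\alpha>p-2$, i.e. $p\alpha+2-p>0$. Hence $g'>0$ on all of $D$, $g$ is strictly increasing, and the conclusion follows as in the first paragraph. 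The only mildly delicate point is the sign of the numerator when $p>2$: there one cannot argue "for all $\alpha>0$'' but must invoke the constraint that $\alpha$ lies where $g$ is defined at all, which pins $\alpha$ above $\frac{p-2}{p-1}$ and hence above $\frac{p-2}{p}$; everything else is routine.
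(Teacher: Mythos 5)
Your proposal is correct and follows essentially the same route as the paper: differentiate the defining relation, obtain the factor $\dfrac{p\alpha+2-p}{2(p-1)\left(\alpha^2+\frac{2-p}{p-1}\alpha\right)^{3/2}}$, and use the domain constraint $\alpha>\frac{p-2}{p-1}$ (needed only for $p>2$) to see that the numerator is positive, hence the correspondence $\theta_0\mapsto\alpha$ is strictly decreasing. Your case split at $p=2$ and the explicit check that the relevant domain is a single interval are slightly more careful than the paper's one-line bound, but the argument is the same.
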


\begin{proof}
    Simply differentiate the equation relating $\theta_0$ and $\alpha$. This yields
    \[
        -\frac{1}{\pi} = \frac{p\alpha+2-p}{2(p-1)\left(\alpha^2+\frac{2-p}{p-1}\alpha\right)^{3/2}}\cdot\frac{d\alpha}{d\theta_0}.
    \]
    Now, in order that the square root in the equation relating $\theta$ and $\alpha$ is real, we must have $\alpha>\frac{p-2}{p-1}$. This rearranges to say that $p\alpha+2-p>\alpha$, so the derivative computation above rearranges to
    \[
        \frac{d\alpha}{d\theta_0} < -\frac{2(p-1)\left(\alpha^2+\frac{2-p}{p-1}\alpha\right)^{3/2}}{\alpha\pi} < 0.
    \]
\end{proof}

\begin{theorem}
    Any homogeneous $p$-harmonic map $f:\R^2\to C(k)$ into a $k$-pod either has degree $1$ or degree $\alpha\geq 9/8$. Moreover if $C(C_n,\theta)$ is a smooth cone whose vertex is positively curved, then any homogeneous $p$-harmonic map $f:C(C_n,\theta)\to C(k)$ has degree $\alpha>1$, and if the total angle of $C(C_n,\theta)$ is at most $3\pi$ then any map that visits at least three edges of $C(k)$ has degree $\alpha\geq1$.
\end{theorem}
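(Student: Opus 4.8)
The plan is to follow the proof of the harmonic Corollary above essentially verbatim, substituting the Akman--Lewis--Vogel angle--degree relation and Lemma~\ref{p decreasing} for the explicit identity $\alpha\theta_0=\pi$ used there. First I would reduce to the setup of Theorem~\ref{1-d harmonic}: after subdividing the domain, $f$ sends each face of $C(C_{n'},\theta')$ to an edge of $C(k)$, and only vertices of the cone can map to the vertex of $C(k)$. These ``special'' vertices partition the cycle $C_{n'}$ into arcs. On each arc I would unfold its faces into a single plane sector $S_\Theta$, where $\Theta$ is the sum of the constituent face angles. Since $f$ is balanced, the $p$-harmonic analogue of Lemma~\ref{k-pod smooth} makes $f$ of class $C^1$ across each interior edge of the arc, so the unfolded map is a $C^1$ function on $S_\Theta$ that is $p$-harmonic off a finite union of seam rays; by interior regularity for the $p$-Laplacian it is therefore $p$-harmonic on all of $S_\Theta$. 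It is homogeneous of degree $\alpha$ and vanishes on both boundary rays of $S_\Theta$, so the uniqueness in \cite{akman-lewis-vogel} identifies $\alpha$ as the unique degree associated with the angle $\Theta$ by the displayed relation.

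Next, because $f$ is homogeneous of a single degree $\alpha$, every arc has the same associated degree; and since by Lemma~\ref{p decreasing} the correspondence $\Theta\leftrightarrow\alpha$ is strictly monotone, hence injective, all arcs share a common total angle $\Theta_0$. If there are $n$ arcs then $n\Theta_0=\Sigma$, the total cone angle $\sum_{e}\theta'(e)$, and Remark~\ref{max princ} gives $n\geq2$, with $n\geq3$ when $f$ visits at least three edges of $C(k)$.

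I would then specialize. For $\R^2$ one has $\Sigma=2\pi$: the case $n=2$ yields $\Theta_0=\pi$, and substituting $\theta_0=\pi$ in the relation forces $\alpha=1$, while $n\geq3$ yields $\Theta_0=2\pi/n\leq 2\pi/3$, so Lemma~\ref{p decreasing} makes $\alpha$ at least the degree attached to the angle $2\pi/3$; solving $1-\tfrac23=\frac{\alpha-1}{\sqrt{\alpha^2+\frac{2-p}{p-1}\alpha}}$, equivalently $8\alpha^2-\bigl(18+\tfrac{2-p}{p-1}\bigr)\alpha+9=0$, one checks that the root exceeding $1$ is increasing in $\tfrac{2-p}{p-1}$ and approaches $9/8$ as $p\to\infty$, so $\alpha>9/8$, in particular $\alpha\geq 9/8$, for every $p>1$. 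For a positively curved smooth cone $\Sigma<2\pi$, so $\Theta_0=\Sigma/n<\pi$ (using $n\geq2$), and hence $\alpha$ strictly exceeds the degree $1$ attached to $\theta_0=\pi$. Finally, if $\Sigma\leq3\pi$ and $f$ visits at least three edges then $n\geq3$ gives $\Theta_0=\Sigma/n\leq\pi$, so $\alpha$ is at least the degree $1$ attached to $\theta_0=\pi$.

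The main obstacle is the first step. Since the $p$-Laplacian is nonlinear one cannot exhibit the face maps explicitly or add solutions, so the crux is to show that the balanced $C^1$-unfolding across an edge is genuinely $p$-harmonic: this requires regularity theory for the $p$-Laplace operator, with the balancing condition playing exactly the role it plays for the ordinary Laplacian in Lemma~\ref{k-pod smooth} (matching the normal derivatives along the seam). Once this is in place, the monotonicity of the relevant root of the above quadratic in $p$ is a brief calculus computation, and the rest is the same bookkeeping with the angle identity as in the harmonic case.
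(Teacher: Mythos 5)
Your proposal is correct and follows essentially the same route as the paper: reduce (via unfolding arcs between preimages of the $k$-pod vertex, the Akman--Lewis--Vogel uniqueness, and the monotonicity of Lemma~\ref{p decreasing}) to the case where all arcs share a common angle $\Theta_0$ with $n\Theta_0$ equal to the total cone angle, and then run the same case analysis; your quadratic $8\alpha^2-(18+\tfrac{2-p}{p-1})\alpha+9=0$ is exactly the paper's closed form $\alpha=\frac{17p-16+\sqrt{p^2+32p-32}}{16(p-1)}$. The one place you go beyond the paper is in isolating and justifying the removability of the seams for the nonlinear operator (the $p$-harmonic analogue of Lemma~\ref{k-pod smooth}), a step the paper passes over with ``just as in Theorem~\ref{1-d harmonic}''; your treatment of it is sound.
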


\begin{proof}
    Just as in Theorem~\ref{1-d harmonic} we can construct homogeneous $p$-harmonic maps $f:C(C_n,\theta)\to C(k)$ essentially (i.e. up to subdivision) only when $\theta(e)=\theta_0$ for all $e\in E(C_n)$. In each face $C(e)$, $f$ is represented in coordinates by $c(e)\phi_\alpha$, where $\alpha$ is the degree related to the angle $\theta_0$ in the above equation. Then the balancing condition of Definition~\ref{1-d balancing} says that $f$ is balanced if and only if the constants $c(e)$ do not depend on $e\in E(\Gamma)$, i.e. $c(e)=c$ for all $e$.

    If the domain is isometric to $\R^2$, i.e. if the vertex of $C(C_n,\theta)$ is flat, that means
    \[
        \sum_{e\in E(C_n)}\theta(e) = n\theta_0 = 2\pi
    \]
    
    When $n=2$ and $\theta_0=\pi$ the corresponding degree of a $p$-harmonic function $u:S_\pi\to\hl$ is $\alpha=1$. If $n\geq3$ then $\theta_0\leq 2\pi/3$. According to Lemma~\ref{p decreasing} smaller angles $\theta_0$ correspond to larger degrees $\alpha$, so the smallest degree happens when $\theta_0=2\pi/3$. Solving the relationship between $\theta_0$ and $\alpha$ in the case $\theta_0=2\pi/3$ gives
    \[
        \alpha = \frac{17p-16 + \sqrt{p^2+32p-32}}{16(p-1)}.
    \]
    This is a decreasing function of $p$ so it is bounded below by its limit as $p\to\infty$, namely $\alpha\geq 9/8$.
    
    And if the vertex of $C(C_n,\theta)$ is positively curved then $\sum_e\theta(e) = n\theta_0<2\pi$. In this case we have $\theta_0<\pi$, so by Lemma~\ref{p decreasing} we have $\alpha>1$.
    
    In fact if $\sum_e\theta(e)=n\theta_0\leq3\pi$ and the map $f$ visits at least 3 edges of $C(k)$ then we must have $n\geq3$ so that $\theta_0\leq\pi$, and again this means $\alpha\geq1$.
\end{proof}

\section{Balanced harmonic maps between cones}\label{S singular target}

Consider a homogeneous map $f:C(\Gamma,\theta)\to C(\Gamma,\varphi)$, and assume $f$ maps each edge and face of $C(\Gamma)$ to itself. The map $f$ is harmonic if its restriction to each face of $C(\Gamma,\theta)$ is a harmonic map between sectors of the plane. In this section we investigate the balanced (as in Definition~\ref{2-d balancing}) homogeneous harmonic maps $f:C(\Gamma,\theta)\to C(\Gamma,\varphi)$. According to Proposition~\ref{maps between sectors} such a map can only exist if $0<\alpha\theta(e)\leq\pi$ for each $e\in E(\Gamma)$. The analysis will be different if $\alpha\theta(e)=\pi$ for some edge $e$ (necessarily $\theta(e)=\theta_{\max}$); we will call this the singular degree.

We will focus first on the case of non-singular $\alpha$. We begin by constructing homogeneous harmonic maps $f:C(\Gamma,\theta)\to C(\Gamma,\varphi)$, and later investigating which of those maps are balanced.

\begin{lemma}
    A homogeneous harmonic map $f:C(\Gamma,\theta)\to C(\Gamma,\varphi)$ of degree $0<\alpha<\pi/\theta_{\max}$ is uniquely determined by a function $\rho:V(\Gamma)\to\hl$.
\end{lemma}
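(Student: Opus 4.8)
The plan is to mimic the proof of Lemma~\ref{rho determines f}, using Proposition~\ref{maps between sectors} in place of Lemma~\ref{BVP} to reconstruct $f$ face by face from boundary data, and then to check that the face-wise definitions are consistent along the edges of $C(\Gamma,\theta)$.

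First I would fix a function $\rho:V(\Gamma)\to\hl$ and, for each edge $e=(v_0v_1)\in E(\Gamma)$, choose coordinates in the domain face $C(e)\subset C(\Gamma,\theta)$ adapted to $v_0$, identifying $C(e)$ with $S_{\theta(e)}$, and coordinates in the target face $C(e)\subset C(\Gamma,\varphi)$ adapted to $v_0$, identifying it with $S_{\varphi(e)}$. Since $0<\alpha<\pi/\theta_{\max}$ we have $0<\alpha\theta(e)<\pi$ for every edge, so Proposition~\ref{maps between sectors} applies: the two non-negative numbers $\rho(v_0)$ and $\rho(v_1)$, not the trivial case unless both vanish, uniquely determine a homogeneous harmonic map $u_e:S_{\theta(e)}\to S_{\varphi(e)}$ of degree $\alpha$ with $\abs{u_e(1,0)}=\rho(v_0)$ and $\abs{u_e(1,\theta(e))}=\rho(v_1)$ and sending boundary to boundary. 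Define $f$ on $C(e)$ to be this $u_e$ read back through the chosen coordinate identifications.

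Next I would verify well-definedness along the edges $C(v)$ of $C(\Gamma,\theta)$. Along an edge $C(v)$ the map $f$ restricted to $C(v)$ is forced to be $t\mapsto$ the point of the target edge $C(v)\subset C(\Gamma,\varphi)$ at distance $\rho(v)t^\alpha$ from the vertex; this description depends only on $\rho(v)$ and not on which incident face $C(e)$ we used to compute it, because in the coordinates adapted to $v$ the relevant boundary value of $u_e$ is exactly $\rho(v)$. Hence the face-wise definitions agree on common edges, $f$ is globally well-defined and continuous, maps each edge and face of $C(\Gamma)$ to itself, and is harmonic on each face by construction. Conversely, any homogeneous harmonic $f$ of this degree that fixes edges and faces restricts on each face to a homogeneous harmonic map between the corresponding sectors sending boundary to boundary, which by Proposition~\ref{maps between sectors} is determined by $\abs{f(v_0,1)}$ and $\abs{f(v_1,1)}$; collecting these values over all vertices gives the function $\rho$, so $f$ is recovered from $\rho$.

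The only real subtlety — the point I would be most careful about — is that Proposition~\ref{maps between sectors} is stated for \emph{non-trivial} maps, so I should treat the vertices where $\rho(v)=0$ honestly: if $\rho(v_0)=\rho(v_1)=0$ on some edge then $f$ is the constant (vertex) map on that face, and one must note this still glues consistently and is harmonic. This is routine but worth a sentence, since a map $f$ may well be the trivial (constant) map, exactly as in the Euclidean-target case. Everything else is a direct transcription of the argument for Lemma~\ref{rho determines f}.
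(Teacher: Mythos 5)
Your proposal is correct and follows essentially the same route as the paper's proof: construct $f$ face by face from $\rho$ via Proposition~\ref{maps between sectors} in coordinates adapted to a chosen endpoint, then observe that the restriction to each edge $C(v)$ depends only on $\rho(v)$, so the face-wise definitions glue. Your extra care about the trivial case $\rho(v_0)=\rho(v_1)=0$ is a reasonable addendum the paper leaves implicit.
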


\begin{proof}
    For a fixed edge $e\in E(\Gamma)$ consider the faces $C(e)\subset C(\Gamma,\theta)$ and $C(e)\subset C(\Gamma,\varphi)$, which are isometric to $S_{\theta(e)}$ and $S_{\varphi(e)}$ respectively. According to Proposition~\ref{maps between sectors} a homogeneous harmonic map $u:S_{\theta(e)}\to S_{\varphi(e)}$ is uniquely determined by the values $\abs{u(1,0)}$ and $\abs{u(1,\theta_0)}$ (using polar coordinates $(r,\theta)$ in $S_{\theta(e)}$).
    
    Fix a function $\rho:V(\Gamma)\to\hl$. For each edge $e=(v_0v_1)\in E(\Gamma)$, fix coordinates in the face $C(e)$ adapted to $v_0$ for each metric, so that $C(e)$ is identified with $S_{\theta(e)}$ or $S_{\varphi(e)}$ and the edge $C(v)$ is identified with the positive $x$-axis. Define $f$ in $C(e)$ to be represented in these coordinates by the unique homogeneous harmonic map $u:S_{\theta(e)}\to S_{\varphi(e)}$ with $u(1,0)=\big(\rho(v_0),0\big)$ and $u(1,\theta(e))=\big(\rho(v_1)\cos(\varphi(e)),\rho(v_1)\sin(\varphi(e))\big)$.
    
    Along each edge $C(v)$ of $C(\Gamma)$ the map $f$ is defined independently of the choice of incident edge. Hence $f$ is continuous over the edges of $C(\Gamma,\theta)$ and harmonic in each face.
\end{proof}

For ease of notation, we will enumerate the vertices of $\Gamma$ by $\{v_i\}$. Then a function $\rho:V(\Gamma)\to\hl$ can be encoded by the vector $(\rho_i=\rho(v_i))$. If $v_iv_j\in E(\Gamma)$ is an edge, we will also denote by $\theta_{ij}=\theta(v_iv_j)$ and $\varphi_{ij} = \varphi(v_iv_j)$.

\begin{theorem}
    The homogeneous harmonic map $f:C(\Gamma,\theta)\to C(\Gamma,\varphi)$ of degree $0<\alpha<\pi/\theta_{\max}$ determined by $\rho:V(\Gamma)\to\hl$ is balanced if and only if for each vertex $v_i$,
    \[
        \sum_{v_j\sim v_i} \frac{\cos(\varphi_{ij})\rho_j-\cos(\alpha\theta_{ij})\rho_i}{\sin(\alpha\theta_{ij})}=0.
    \]
\end{theorem}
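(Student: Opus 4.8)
The plan is to follow the exact pattern of the proof of Theorem~\ref{euclidean balanced}, adapting it from the Euclidean-target case to the $k$-pod-target... no, the two-dimensional-target case. The key structural difference is that now the map in each face is a harmonic map $u_j:S_{\theta(e_j)}\to S_{\varphi(e_j)}$ between sectors rather than a real-valued harmonic function, and the balancing condition of Definition~\ref{2-d balancing} only involves the $\theta$-derivative of the \emph{first} (rectangular) coordinate $x_j$ of $u_j$ along $C(v_i)$.

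First I would fix a vertex $v_i$ and, for each incident edge $v_iv_j$, choose coordinates adapted to $v_i$ in the face $C(v_iv_j)$ of both $C(\Gamma,\theta)$ and $C(\Gamma,\varphi)$, so that the face of the domain is identified with $S_{\theta_{ij}}$ (polar coordinates $(r,\theta)$) and the face of the target with $S_{\varphi_{ij}}$ (rectangular coordinates $(x,y)$). Then I would invoke the formula from the proof of Proposition~\ref{maps between sectors}: in these coordinates $f|_{C(v_iv_j)}$ is represented by $u_j=(x_j,y_j)$ with
\[
    x_j(r,\theta) = r^\alpha\left(\rho_i\cos(\alpha\theta) + \frac{\rho_j\cos(\varphi_{ij})-\rho_i\cos(\alpha\theta_{ij})}{\sin(\alpha\theta_{ij})}\sin(\alpha\theta)\right),
\]
using the identifications $\abs{u_j(1,0)} = \rho_i$ and $\abs{u_j(1,\theta_{ij})} = \rho_j$ coming from the previous Lemma and the fact that the degree $\alpha<\pi/\theta_{\max}$ is non-singular, so $\sin(\alpha\theta_{ij})\neq 0$ for every edge.

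Next I would differentiate $x_j$ with respect to $\theta$ and evaluate at $\theta=0$, obtaining $\frac{\partial x_j}{\partial\theta}(r,0) = \alpha r^\alpha\cdot\frac{\rho_j\cos(\varphi_{ij})-\rho_i\cos(\alpha\theta_{ij})}{\sin(\alpha\theta_{ij})}$. Plugging this into the balancing condition $\sum_j\frac{\partial x_j}{\partial\theta}(r,0)=0$ of Definition~\ref{2-d balancing} and dividing by the common nonzero factor $\alpha r^\alpha$ yields exactly $\sum_{v_j\sim v_i}\frac{\cos(\varphi_{ij})\rho_j-\cos(\alpha\theta_{ij})\rho_i}{\sin(\alpha\theta_{ij})}=0$. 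Since $f$ is balanced precisely when this holds along every edge $C(v_i)$, this completes the argument.

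I do not expect a serious obstacle here — the proof is a direct transcription of Theorem~\ref{euclidean balanced}. The one point requiring a little care is making sure the coordinate formula for $x_j$ is quoted correctly from Proposition~\ref{maps between sectors} (note the appearance of $\cos(\varphi_{ij})$ multiplying $\rho_j$, which is the only way the target geometry $\varphi$ enters), and confirming that the $y_j$-components play no role in the balancing condition as defined. One should also note explicitly that the hypothesis $0<\alpha<\pi/\theta_{\max}$ guarantees $0<\alpha\theta_{ij}<\pi$ for all edges, so all the cosecants and cotangents appearing are well-defined.
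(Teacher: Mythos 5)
Your proof is correct and follows essentially the same route as the paper: it quotes the explicit coordinate representation of $u_j=(x_j,y_j)$ from Proposition~\ref{maps between sectors}, differentiates the $x$-component at $\theta=0$, and matches the result against the balancing condition of Definition~\ref{2-d balancing}, dividing out the common factor $\alpha r^\alpha$. The points you flag for care (the $\cos(\varphi_{ij})$ factor on $\rho_j$, the irrelevance of $y_j$, and non-singularity of $\alpha$ ensuring $\sin(\alpha\theta_{ij})\neq0$) are exactly the right ones.
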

Here $v\sim w$ means that vertices $v$ and $w$ are adjacent, so there is an edge $vw\in E(\Gamma)$.

\begin{proof}
    In each face $C(v_iv_j)$ choose coordinates adapted to $v_i$ for each metric, so that the face is identified with $S_{\theta_{ij}}$ or $S_{\varphi_{ij}}$ and $C(v_i)$ is identified with the positive $x$-axis. According to Proposition~\ref{maps between sectors}, in these coordinates $f\vert_{C(v_iv_j)}$ is represented by
    \[
        u_j(r,\theta) = r^\alpha\left(\rho_i\cos(\alpha\theta) + \frac{\rho_j\cos(\varphi_{ij})-\rho_i\cos(\alpha\theta_{ij})}{\sin(\alpha\theta_{ij})}\sin(\alpha\theta),\frac{\sin(\varphi_{ij})}{\sin(\alpha\theta_{ij})}\rho_j\sin(\alpha\theta)\right).
    \]
    And the balancing formula of Definition~\ref{2-d balancing} says that $f$ is balanced along $C(v_i)$ if
    \[
        \sum_{v_j\sim v_i} \frac{\partial \chi_j}{\partial\theta}(r,0) = 0,
    \]
    where $r^\alpha\chi(\theta)$ is the $x$-coordinate of $u(r,\theta)$.
    
    Combining these two formulas, we see that $f$ is balanced along $C(v_i)$ if
    \[
        \alpha\sum_{v_j\sim v_i} \frac{\cos(\varphi_{ij})\rho_j-\cos(\alpha\theta_{ij})\rho_i}{\sin(\alpha\theta_{ij}))} = 0.
    \]
    In order to be balanced, $f$ must be balanced along each edge of $C(\Gamma,\theta)$, so every sum of this form must vanish.
\end{proof}

Another way to interpret this result is to say a vector $(\rho_j)$ determines a balanced homogeneous harmonic function $f:C(\Gamma,\theta)\to C(\Gamma,\varphi)$ of degree $0<\alpha<\pi/\theta_{\max}$ if it is in the kernel of the matrix $\Delta^\varphi_{\alpha\theta}$ with entries
\[
    (\Delta^\varphi_{\alpha\theta})_{ij} = \begin{cases}
        -\sum_{v_k\sim v_i}\cot(\alpha\theta_{ik}), & i=j\\
        \cos(\varphi_{ij})\csc(\alpha\theta_{ij}), & v_i\sim v_j\\
        0 & v_i\not\sim v_j.
    \end{cases}
\]
For only certain values of $\alpha$ will $\Delta^\varphi_{\alpha\theta}$ have a non-trivial kernel. That is, only certain degrees $\alpha$ admit balanced homogeneous harmonic maps.

The situation for harmonic maps between cones is much more rigid than that of harmonic functions. In special cases the degrees $\alpha$ and the corresponding balanced homogeneous harmonic maps are easy to describe.

\begin{proposition}\label{constant theta and phi}
    Suppose $\theta(e)=\theta_0$ and $\varphi(e)=\varphi_0$ for all $e\in E(\Gamma)$. Then there exist balanced harmonic maps $f:C(\Gamma,\theta)\to C(\Gamma,\varphi)$ of degree $0<\alpha<\pi/\theta_0$ if and only if $\alpha\theta_0=\varphi_0$. If $\alpha\theta_0=\varphi_0=\pi/2$ then every function $\rho:V(\Gamma)\to\hl$ determines such an $f$, but if $\alpha\theta_0=\varphi_0\neq\pi/2$ then only the constant vectors $\rho=c\mathbb{1}$ with $\rho_i=c$ for all $i$ do.
\end{proposition}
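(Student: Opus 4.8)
The plan is to work directly with the matrix $\Delta^\varphi_{\alpha\theta}$ displayed just above, whose kernel parametrizes the balanced homogeneous harmonic maps of degree $0<\alpha<\pi/\theta_0$. Writing $d_i$ for the degree of $v_i$ and specializing the entries to the constant angles $\theta_0,\varphi_0$, the $i$-th equation $(\Delta^\varphi_{\alpha\theta}\rho)_i=0$, after clearing the factor $\sin(\alpha\theta_0)>0$ (positive since $0<\alpha\theta_0<\pi$), reads
\[
    \cos(\varphi_0)\sum_{v_j\sim v_i}\rho_j \;=\; d_i\cos(\alpha\theta_0)\,\rho_i .
\]
So a balanced homogeneous harmonic map $f:C(\Gamma,\theta)\to C(\Gamma,\varphi)$ of degree $0<\alpha<\pi/\theta_0$ is the same thing as a function $\rho:V(\Gamma)\to\hl$ satisfying this relation at every vertex, and $f$ is nontrivial exactly when $\rho\not\equiv0$.

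For the ``only if'' direction I would sum these relations over all $i$. Since each $\rho_j$ occurs once for every one of the $d_j$ edges at $v_j$, we have $\sum_i\sum_{v_j\sim v_i}\rho_j=\sum_j d_j\rho_j$, and the summed identity collapses to $\big(\cos(\varphi_0)-\cos(\alpha\theta_0)\big)\sum_i d_i\rho_i=0$. This is where the sign constraint $\rho\geq0$ is used: if $\rho\not\equiv0$ then $\sum_i d_i\rho_i>0$ because $d_i\geq1$ for every $i$ (as $\Gamma$ is connected), forcing $\cos(\varphi_0)=\cos(\alpha\theta_0)$; since both $\alpha\theta_0$ and $\varphi_0$ lie in $(0,\pi)$, where $\cos$ is injective, this gives $\alpha\theta_0=\varphi_0$.

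For the converse, assume $\alpha\theta_0=\varphi_0$, so the vertex relation becomes $\cos(\varphi_0)\big(\sum_{v_j\sim v_i}\rho_j-d_i\rho_i\big)=0$. If $\varphi_0=\pi/2$ the coefficient vanishes and the relation holds for \emph{every} $\rho:V(\Gamma)\to\hl$; thus every such $\rho$ determines a balanced homogeneous harmonic map, and choosing $\rho\not\equiv0$ produces nontrivial ones. If $\varphi_0\neq\pi/2$ the relation is equivalent to $\rho_i=\tfrac1{d_i}\sum_{v_j\sim v_i}\rho_j$ at each vertex, i.e.\ $\rho$ lies in the kernel of $L=D^{-1}\Delta=I-D^{-1}A$; for connected $\Gamma$ this kernel is spanned by $\mathbb{1}$, since a harmonic function on a finite connected graph attains its maximum at every neighbor of a maximum vertex and hence is constant. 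Conversely every constant $\rho=c\mathbb{1}$ with $c\geq0$ solves the relation, and $c>0$ gives nontrivial maps. This is exactly the stated dichotomy, and in particular proves the existence claim.

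I expect no serious obstacle: the argument is a short computation. The only point that needs care is recognizing that the ``only if'' direction should be proved not via a spectral or monotonicity estimate for $\Delta^\varphi_{\alpha\theta}$ (the interaction of $\varphi$ with $\theta$ makes such estimates delicate) but simply from $\rho\geq0$, which upgrades the summed identity to a strict inequality; and, in the case $\varphi_0\neq\pi/2$, invoking the elementary graph maximum principle to pin the solutions down to constants.
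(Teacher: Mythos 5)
Your proof is correct and follows essentially the same route as the paper's: both reduce the problem to the kernel of $\Delta^\varphi_{\alpha\theta}$, use the non-negativity of $\rho$ together with the identity $\big(\cos(\varphi_0)-\cos(\alpha\theta_0)\big)\sum_i \deg(v_i)\rho_i=0$ to force $\alpha\theta_0=\varphi_0$, and then identify the admissible $\rho$ with constants (or with all of $\R_{\geq0}^{\#V(\Gamma)}$ when $\varphi_0=\pi/2$). The only difference is presentational: the paper phrases the two key facts spectrally, via orthogonality of nonzero-eigenvalue eigenvectors of $\mathcal{L}$ to the null vector $D^{1/2}\mathbb{1}$ and via $\ker\mathcal{L}=\mathrm{span}(D^{1/2}\mathbb{1})$, whereas you obtain them directly by summing the vertex relations and by the graph maximum principle.
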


\begin{proof}
    First suppose $\varphi_0=\pi/2$. Then $\cos(\varphi_0) = 0$ so the matrix $\Delta^\varphi_{\alpha\theta}$ is diagonal with diagonal entries $deg(v_i)\cot(\alpha\theta_0)$. A diagonal operator only has a kernel if some of its diagonal entries are 0. In this case the only possibility is if $\cot(\alpha\theta_0)=0$, i.e. $\alpha\theta_0=\pi/2$. In this case $\Delta^\varphi_{\alpha\theta_0} = 0$, so all vectors $(\rho_i)$ are in its kernel.
    
    Now suppose $\varphi_0\neq\pi/2$. The matrix $\Delta^\varphi_{\alpha\theta}$ can be rewritten
    \[
        \Delta^\varphi_{\alpha\theta} = \csc(\alpha\theta_0)\Big(\cos(\varphi_0)A - \cos(\alpha\theta_0)D\Big).
    \]
    A vector $\rho$ is in the kernel of this matrix if and only if
    \begin{align*}
        \cos(\varphi_0)A\rho & = \cos(\alpha\theta_0)D\rho\\
        D^{-1}A\rho & = \frac{\cos(\alpha\theta_0)}{\cos(\varphi_0)}\rho\\
        D^{-1/2}\Big(1-D^{-1/2}AD^{-1/2}\Big)D^{1/2}\rho & = \frac{\cos(\varphi_0)-\cos(\alpha\theta_0)}{\cos(\varphi_0)}\rho\\
        \mathcal{L}D^{1/2}\rho & = \frac{\cos(\varphi_0)-\cos(\alpha\theta_0)}{\cos(\varphi_0)}D^{1/2}\rho
    \end{align*}
    Thus $D^{1/2}\rho$ is an eigenvector of $\mathcal{L}$ with eigenvalue $\lambda = 1-\frac{\cos(\alpha\theta_0)}{\cos(\varphi_0)}$.
    
    If $\alpha\theta_0 \neq\varphi_0$ then $\lambda\neq 0$. But the eigenvectors of $\mathcal{L}$ with non-zero eigenvalue are perpendicular to the null-vector $w=D^{1/2}\mathbb{1}$ with $w_i = \sqrt{deg(v_i)}$ for each $i$. In other words,
    \[
        0 = D^{1/2}\rho\cdot w = \sum_i deg(v_i)\rho_i.
    \]
    But in order to determine a balanced homogeneous harmonic map, all coordinates of $\rho$ must be non-negative! For $\rho\not\equiv0$ the above equation precludes this possiblity, so the only case left to consider is $\alpha\theta_0=\varphi_0$.
    
    If $\alpha\theta_0=\varphi_0$ then a vector $\rho$ in the kernel of $\Delta^\varphi_{\alpha\theta}$ must satisfy $D^{1/2}\rho = cw =cD^{1/2}\mathbb{1}$, so $\rho=c\mathbb{1}$.
\end{proof}

For metrics given by more general functions $\theta,\varphi:E(\Gamma)\to(0,\pi)$, the question of which degrees $\alpha$ admit balanced homogeneous harmonic functions is more subtle. But we can achieve a lower bound on such $\alpha$ more easily than we did for functions into $\R$.

\begin{theorem}\label{neg def}
    If there is a balanced homogeneous harmonic map $f:C(\Gamma,\theta)\to C(\Gamma,\varphi)$ then
    \[
        \alpha\geq \min\left(\frac{\pi}{2\theta_{\max}},\min\left\{\frac{\varphi(e)}{\theta(e)}\vert e\in E(\Gamma)\right\}\right).
    \]
\end{theorem}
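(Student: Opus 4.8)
The plan is to argue by contradiction using the quadratic form associated to the matrix $\Delta^\varphi_{\alpha\theta}$ introduced above. Suppose $f$ is a balanced homogeneous harmonic map of degree $\alpha$. If $\alpha\theta_{\max}=\pi$ the degree is singular and $\alpha=\pi/\theta_{\max}>\pi/(2\theta_{\max})$, so the asserted bound holds trivially; thus I may assume $\alpha<\pi/\theta_{\max}$, so that by the characterization above the map $f$ is encoded by a vector $\rho=(\rho_i)$ with all $\rho_i\geq 0$ and $\rho\not\equiv 0$ lying in the kernel of $\Delta^\varphi_{\alpha\theta}$; in particular $\rho\cdot\Delta^\varphi_{\alpha\theta}\rho=0$. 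The goal is to show that if $\alpha$ were strictly smaller than the stated minimum, this equation would force $\rho\equiv 0$, a contradiction.

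First I would compute the quadratic form edge by edge, exactly as in Lemma~\ref{increasing}:
\[
\rho\cdot\Delta^\varphi_{\alpha\theta}\rho = \sum_{v_iv_j\in E(\Gamma)} \csc(\alpha\theta_{ij})\Big(2\cos(\varphi_{ij})\rho_i\rho_j - \cos(\alpha\theta_{ij})(\rho_i^2+\rho_j^2)\Big).
\]
Since $0<\alpha\theta_{ij}\leq\alpha\theta_{\max}<\pi$, the factor $\csc(\alpha\theta_{ij})$ is positive, so it suffices to control the sign of each bracketed term $T_{ij}:=2\cos(\varphi_{ij})\rho_i\rho_j - \cos(\alpha\theta_{ij})(\rho_i^2+\rho_j^2)$. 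The hypothesis $\alpha<\pi/(2\theta_{\max})$ gives $\alpha\theta_{ij}<\pi/2$, hence $\cos(\alpha\theta_{ij})>0$, and the hypothesis $\alpha<\varphi(e)/\theta(e)$ for every edge gives $\alpha\theta_{ij}<\varphi_{ij}$.

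The key step is then a case split on the sign of $\cos\varphi_{ij}$. If $\varphi_{ij}\geq\pi/2$ then $2\cos(\varphi_{ij})\rho_i\rho_j\leq 0$ (because $\rho_i,\rho_j\geq 0$) and $-\cos(\alpha\theta_{ij})(\rho_i^2+\rho_j^2)\leq 0$, so $T_{ij}\leq 0$ with equality only when $\rho_i=\rho_j=0$. If instead $\varphi_{ij}<\pi/2$, then $\cos(\alpha\theta_{ij})>\cos(\varphi_{ij})>0$, and using $\rho_i^2+\rho_j^2\geq 2\rho_i\rho_j$ one gets $T_{ij}\leq 2\rho_i\rho_j\big(\cos(\varphi_{ij})-\cos(\alpha\theta_{ij})\big)\leq 0$, again with equality forcing $\rho_i=\rho_j=0$ (equivalently, in this case the symmetric $2\times 2$ matrix defining $T_{ij}$ has negative trace and positive determinant $\cos^2(\alpha\theta_{ij})-\cos^2(\varphi_{ij})$, hence is negative definite). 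In all cases $T_{ij}\leq 0$, with equality only at $\rho_i=\rho_j=0$.

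To finish, I would combine these: $\rho\cdot\Delta^\varphi_{\alpha\theta}\rho=\sum_e\csc(\alpha\theta_e)T_e\leq 0$, and since this equals $0$ every $T_e$ must vanish, so $\rho_i=\rho_j=0$ for each edge $v_iv_j$. Because $\Gamma$ is connected (and has at least one edge), every vertex is an endpoint of some edge, so $\rho\equiv 0$, contradicting $\rho\not\equiv 0$. Hence $\alpha\geq\min\big(\pi/(2\theta_{\max}),\min_e\varphi(e)/\theta(e)\big)$. I expect the only real care is in the case analysis around $\varphi(e)=\pi/2$: the cross term must be absorbed using non-negativity of $\rho$ when $\cos\varphi(e)\leq 0$, and using the sharper comparison $\alpha\theta(e)<\varphi(e)$ when $\cos\varphi(e)>0$. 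The propagation of $\rho_i=0$ across $\Gamma$ via connectedness is routine.
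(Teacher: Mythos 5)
Your proof is correct and follows essentially the same route as the paper: assume $\alpha$ is below the stated minimum, show the quadratic form $\rho\cdot\Delta^\varphi_{\alpha\theta}\rho$ is strictly negative on nonzero non-negative vectors, and contradict $\rho\in\ker\Delta^\varphi_{\alpha\theta}$. The only cosmetic difference is that the paper avoids your case split on the sign of $\cos\varphi_{ij}$ by using the single bound $2\rho_i\rho_j\cos(\varphi_{ij})\leq 2\rho_i\rho_j\cos(\alpha\theta_{ij})$ (valid since $\rho_i\rho_j\geq0$), which gives $T_{ij}\leq-\cos(\alpha\theta_{ij})(\rho_i-\rho_j)^2$ and the same equality analysis.
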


\begin{proof}
    We proceed by the contrapositive. If $\alpha<\pi/2\theta_{\max}$ and $\alpha\theta(e)<\varphi(e)$ for each $e\in E(\Gamma)$, we will show that no balanced homogeneous harmonic map of degree $\alpha$ can exist. Under these assumptions on $\alpha$ we have for each edge $v_iv_j$,
    \[
        \cos(\alpha\theta_{ij}) > \cos(\varphi_{ij}) \quad\text{and}\quad \cos(\alpha\theta_{ij})>0.
    \]
    Now bound the quadratic form associated to $\Delta^\varphi_{\alpha\theta}$ as follows.
    \begin{align*}
        \rho\cdot\Delta^\varphi_{\alpha\theta}\rho & = \sum_{v_iv_j\in E(\gamma)}\frac{2\rho_i\rho_j\cos(\varphi_{ij}) - (\rho_i^2+\rho_j^2)\cos(\alpha\theta_{ij})}{\sin(\alpha\theta_{ij})}\\
         & \leq -\sum_{v_iv_j\in E(\Gamma)}\cot(\alpha\theta_{ij})\big(\rho_i-\rho_j\big)^2 \leq 0.
    \end{align*}
    This quadratic form is negative definite; if $\rho$ is not the zero vector then $\rho\cdot\Delta^\varphi_{\alpha\theta}\rho<0$. Thus no non-trivial balanced homogeneous harmonic maps of degree $\alpha$ can exist.
\end{proof}

We can further study the quadratic form determined by $\Delta^\varphi_{\alpha\theta}$ just as we did in the case of functions to $\R$.

\begin{lemma}\label{2-d increasing}
    For each fixed $\rho=(\rho_i)\not\equiv0$ the quadratic form $\rho\cdot\Delta^\varphi_{\alpha\theta}\rho$ is strictly increasing in $\alpha$.
\end{lemma}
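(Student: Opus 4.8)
The plan is to follow the proof of Lemma~\ref{increasing} essentially verbatim, now carrying along the extra factors $\cos(\varphi_{ij})$ that sit in the off-diagonal entries of $\Delta^\varphi_{\alpha\theta}$. First I would record the quadratic form, which already appears inside the proof of Theorem~\ref{neg def}:
\[
    \rho\cdot\Delta^\varphi_{\alpha\theta}\rho = \sum_{v_iv_j\in E(\Gamma)}\Big(2\csc(\alpha\theta_{ij})\cos(\varphi_{ij})\rho_i\rho_j - \cot(\alpha\theta_{ij})\big(\rho_i^2+\rho_j^2\big)\Big).
\]
Since we work on an interval of non-singular degrees, each $\alpha\theta_{ij}\notin\pi\mathbb{Z}$ and every summand is smooth in $\alpha$, so I can differentiate term by term. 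Using $\tfrac{d}{d\alpha}\csc(\alpha\theta)=-\theta\csc(\alpha\theta)\cot(\alpha\theta)$ and $\tfrac{d}{d\alpha}\cot(\alpha\theta)=-\theta\csc^2(\alpha\theta)$, the edge $v_iv_j$ contributes
\[
    \theta_{ij}\csc^2(\alpha\theta_{ij})\Big(\rho_i^2+\rho_j^2 - 2\cos(\varphi_{ij})\cos(\alpha\theta_{ij})\rho_i\rho_j\Big)
\]
to $\tfrac{\partial}{\partial\alpha}\big(\rho\cdot\Delta^\varphi_{\alpha\theta}\rho\big)$.

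The next step is the sign estimate. Because $\varphi_{ij}\in(0,\pi)$ one has $\abs{\cos(\varphi_{ij})}<1$, hence $\abs{\cos(\varphi_{ij})\cos(\alpha\theta_{ij})}<1$, and therefore
\[
    \rho_i^2+\rho_j^2 - 2\cos(\varphi_{ij})\cos(\alpha\theta_{ij})\rho_i\rho_j \geq \rho_i^2+\rho_j^2-2\abs{\rho_i\rho_j} = \big(\abs{\rho_i}-\abs{\rho_j}\big)^2 \geq 0.
\]
Since $\theta_{ij}\csc^2(\alpha\theta_{ij})>0$, every summand of the derivative is non-negative, so the derivative itself is non-negative. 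For strictness I would suppose $\tfrac{\partial}{\partial\alpha}\big(\rho\cdot\Delta^\varphi_{\alpha\theta}\rho\big)=0$ at some $\alpha$; then every summand vanishes, forcing $\rho_i^2+\rho_j^2=2\cos(\varphi_{ij})\cos(\alpha\theta_{ij})\rho_i\rho_j$ on each edge. Equality in the chain above requires both $\abs{\rho_i}=\abs{\rho_j}$ and $\cos(\varphi_{ij})\cos(\alpha\theta_{ij})\rho_i\rho_j=\abs{\rho_i\rho_j}$; if $\rho_i\rho_j\neq0$ the latter would give $\cos(\varphi_{ij})\cos(\alpha\theta_{ij})=\pm1$, contradicting $\abs{\cos(\varphi_{ij})}<1$. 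Hence $\rho_i=\rho_j=0$ on every edge, and since $\Gamma$ is connected this means $\rho\equiv0$, contrary to hypothesis.

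I do not expect a serious obstacle here: the only change from the function case is the factor $\cos(\varphi_{ij})$, and the fact that $\abs{\cos(\varphi_{ij})}$ is strictly less than $1$ is exactly what salvages both the non-negativity and the strictness --- in Lemma~\ref{increasing} that role was played by $1$ being precisely $\cos(0)$, whereas here it is played by $\varphi_{ij}$ being bounded away from $0$ and $\pi$. The one point that needs care is that the argument is run inside an interval of non-singular degrees, so that $\csc(\alpha\theta_{ij})$, $\cot(\alpha\theta_{ij})$ and their $\alpha$-derivatives are defined and smooth; this is automatic on the range $0<\alpha<\pi/\theta_{\max}$ on which $\Delta^\varphi_{\alpha\theta}$ is defined in the first place.
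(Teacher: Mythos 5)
Your proof is correct and follows essentially the same route as the paper: differentiate the quadratic form edge by edge, bound each summand below by a perfect square, and then analyze the equality case using $\abs{\cos(\varphi_{ij})\cos(\alpha\theta_{ij})}<1$ together with connectedness of $\Gamma$. Your intermediate bound $\rho_i^2+\rho_j^2-2\cos(\varphi_{ij})\cos(\alpha\theta_{ij})\rho_i\rho_j\geq\big(\abs{\rho_i}-\abs{\rho_j}\big)^2$ is in fact slightly more careful than the paper's $\big(\rho_i-\rho_j\big)^2$, which as written requires $\rho_i\rho_j\geq0$; since the lemma is stated for arbitrary $\rho\not\equiv0$, your version is the cleaner one.
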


\begin{proof}
    We computed the quadratic form in the previous result.
    \[
        \rho\cdot\Delta^\varphi_{\alpha\theta}\rho = \sum_{v_iv_j\in E(\gamma)}\Big(2\rho_i\rho_j\cos(\varphi_{ij})\csc(\alpha\theta_{ij}) - (\rho_i^2+\rho_j^2)\cot(\alpha\theta_{ij})\Big).
    \]
    Its derivative can now be bounded as follows.
    \begin{align*}
        \frac{\partial}{\partial\alpha}\Big(\rho\cdot\Delta^\varphi_{\alpha\theta}\rho\Big) & = \sum_{v_iv_j\in E(\Gamma)}\theta_{ij}\csc^2(\alpha\theta_{ij})\Big(\rho_i^2+\rho_j^2 - 2\rho_i\rho_j\cos(\varphi_{ij})\cos(\alpha\theta_{ij})\Big)\\
         & \geq \sum_{v_iv_j\in E(\Gamma)}\theta_{ij}\csc^2(\alpha\theta_{ij})\big(\rho_i-\rho_j\big)^2 \geq 0.
    \end{align*}
    
    Moreover suppose $\frac{\partial}{\partial\alpha}\Big(\rho\cdot\Delta^\varphi_{\alpha\theta}\rho\Big)=0$ at some fixed $\rho=(\rho_i)$ and some fixed $\alpha$. Then $\rho_i\rho_j\cos(\varphi_{ij})\cos(\alpha\theta_{ij}) = \rho_i\rho_j$ for each $v_iv_j\in E(\Gamma)$, and also
    \[
        \sum_{v_iv_j\in E(\Gamma)}\theta_{ij}\csc^2(\alpha\theta_{ij})\Big(\rho_i-\rho_j\Big)^2 = 0.
    \]
    The expression on the left is clearly non-negative, so for equality to hold $\rho$ must be constant. And it follows from $0<\varphi_{ij},\alpha\theta_{ij}<\pi$, $\cos(\varphi_{ij})\cos(\alpha\theta_{ij})\neq1$ that $\rho_i\rho_j=0$ for all $v_iv_j\in E(\Gamma)$. Since $\rho$ is constant, we have $\rho\equiv0$. That is, the only way that $\frac{\partial}{\partial\alpha}\Big(\rho\cdot\Delta_{\alpha\theta}\rho\Big)=0$ at any $\alpha$ is if $\rho\equiv 0$.
\end{proof}

\begin{theorem}\label{2-d eigenvalues}
    In the interval $0<\alpha<\pi/\theta_{\max}$ the eigenvalues of $\Delta^\varphi_{\alpha\theta}$ are strictly increasing functions of $\alpha$.
\end{theorem}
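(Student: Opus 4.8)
The plan is to mirror exactly the argument used for Theorem~\ref{eigenvalues}, now feeding in Lemma~\ref{2-d increasing} in place of Lemma~\ref{increasing}. The two ingredients are analytic perturbation theory for symmetric matrices and the strict monotonicity of the quadratic form.

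First I would observe that on the open interval $0<\alpha<\pi/\theta_{\max}$ one has $\alpha\theta_{ij}\in(0,\pi)$ for every edge $v_iv_j\in E(\Gamma)$, so $\sin(\alpha\theta_{ij})>0$ there; hence each entry of $\Delta^\varphi_{\alpha\theta}$ is a real-analytic function of $\alpha$ on this interval, and the matrix is symmetric for every such $\alpha$. By the standard result in the perturbation theory of eigenvalue problems (e.g. \cite{rellich-berkowitz}, Section 1.1, or \cite{kato}, Theorem 6.1), there exist real-analytic functions $\lambda_j(\alpha)$ and analytically varying unit eigenvectors $w_j(\alpha)\in\R^{\#V(\Gamma)}$ with $\Delta^\varphi_{\alpha\theta}w_j(\alpha)=\lambda_j(\alpha)w_j(\alpha)$.

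Next I would recover the eigenvalues as $\lambda_j(\alpha)=w_j(\alpha)\cdot\Delta^\varphi_{\alpha\theta}w_j(\alpha)$ and differentiate in $\alpha$, exactly as in the proof of Theorem~\ref{eigenvalues}:
\[
    \lambda_j'(\alpha) = 2w_j'(\alpha)\cdot\Delta^\varphi_{\alpha\theta}w_j(\alpha) + w_j(\alpha)\cdot\frac{\partial\Delta^\varphi_{\alpha\theta}}{\partial\alpha}w_j(\alpha).
\]
Since $w_j(\alpha)$ has constant unit length, $w_j'(\alpha)\perp w_j(\alpha)$; and $\Delta^\varphi_{\alpha\theta}w_j(\alpha)=\lambda_j(\alpha)w_j(\alpha)$ is parallel to $w_j(\alpha)$, so the first term vanishes. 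The second term is precisely $\frac{\partial}{\partial\alpha}\big(\rho\cdot\Delta^\varphi_{\alpha\theta}\rho\big)$ evaluated at the fixed vector $\rho=w_j(\alpha)\neq0$, which is strictly positive by Lemma~\ref{2-d increasing}. Hence $\lambda_j'(\alpha)>0$ throughout $0<\alpha<\pi/\theta_{\max}$.

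There is no real obstacle here; the only point requiring a little care is the verification that we stay inside the domain of analyticity, i.e. that $\alpha\theta_{ij}$ never reaches a multiple of $\pi$ on the stated interval — which is immediate from $\theta_{ij}\le\theta_{\max}$ and $\alpha<\pi/\theta_{\max}$. Everything else is a transcription of the earlier argument with $\Delta_{\alpha\theta}$ replaced by $\Delta^\varphi_{\alpha\theta}$.
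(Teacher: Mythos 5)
Your proposal is correct and follows essentially the same route as the paper: analytic dependence of the symmetric matrix $\Delta^\varphi_{\alpha\theta}$ on $\alpha$ in the stated interval, the Rellich/Kato analytic eigenvalue and eigenvector families, the vanishing of the $w_j'\cdot\Delta^\varphi_{\alpha\theta}w_j$ term by unit-length orthogonality, and strict positivity of the remaining term via Lemma~\ref{2-d increasing}. Your added check that $\alpha\theta_{ij}$ stays off $\pi\mathbb{Z}$ on $0<\alpha<\pi/\theta_{\max}$ is a small but welcome explicit justification of the analyticity claim that the paper states without comment.
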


\begin{proof}
    The entries of the matrix $\Delta^\varphi_{\alpha\theta}$ are analytic in $\alpha$ in the interval $0<\alpha<\pi/\theta_{\max}$. Moreover, $\Delta^\varphi_{\alpha\theta}$ is symmetric for each real $\alpha$. A result in the perturbation theory of eigenvalue problems (see \cite{rellich-berkowitz} Section 1.1 or \cite{kato} Theorem 6.1) says that the eigenvalues and eigenvectors of $\Delta_{\alpha\theta}$ are also analytic in $\alpha$. Namely, there are $\lambda_j(\alpha)\in\R$ and $w_j(\alpha)\in\R^{\#V(\Gamma)}$ depending analytically on $\alpha$ so that
    \[
        \Delta^\varphi_{\alpha\theta}w_j(\alpha) = \lambda_j(\alpha)w_j(\alpha).
    \]
    
    Without loss of generality, $w_j(\alpha)$ is a unit vector for each $j$ and each $\alpha$. The eigenvalues can be recovered from the eigenvectors via the formula
    \[
        \lambda_j(\alpha) = w_j(\alpha)\cdot\Delta^\varphi_{\alpha\theta}w_j(\alpha).
    \]
    Differentiating this identity with respect to $\alpha$ yields
    \[
        \lambda'_j(\alpha) = 2w'_j(\alpha)\cdot\Delta^\varphi_{\alpha\theta}w_j(\alpha) + w_j(\alpha)\cdot\frac{\partial\Delta^\varphi_{\alpha\theta}}{\partial\alpha}w_j(\alpha).
    \]
    
    As each $w_j(\alpha)$ is a unit vector, it lies in the unit sphere of $\R^{\#V(\Gamma)}$. Hence $w'(\alpha)$ is perpendicular to $w_j(\alpha)$. But $\Delta_{\alpha\theta}w_j(\alpha) = \lambda_j(\alpha)w_j(\alpha)$ is parallel to $w_j(\alpha)$, so the first term in $\lambda'(\alpha)$ vanishes. And the second term is strictly positive by Lemma~\ref{2-d increasing}. Thus
    \[
        \lambda'_j(\alpha)>0.
    \]
\end{proof}

An immediate consequence of this Theorem is that one can count the number o degrees $\alpha$ admitting balanced homogeneous harmonic functions just as in the case of functions to $\R$. If $\alpha\theta_{ij}=\pi-\epsilon$ for some edge $v_iv_j\in E(\Gamma)$ then the corresponding term in $\rho\cdot\Delta^\varphi_{\alpha\theta}\rho$ is
\begin{align*}
    \frac{2\rho_i\rho_j\cos(\varphi_{ij}) - (\rho_i^2 + \rho_j^2)\cos(\alpha\theta_{ij})}{\sin(\alpha\theta_{ij})} & = \frac{2\rho_i\rho_j\cos(\varphi_{ij}) + \rho_i^2 + \rho_j^2 + O(\epsilon^2)}{\epsilon + O(\epsilon^3)}\\
     & = \frac{(\rho_i-\rho_j)^2}{\epsilon} + \big(1-\cos(\varphi_{ij})\big)\frac{2\rho_i\rho_j}{\epsilon} + O(\epsilon).
\end{align*}
The only way this term does not approach $+\infty$ is if $\rho_i=\rho_j=0$.

As $\alpha\to\pi/\theta_{\max}$, the quadratic form $\rho\cdot\Delta^\varphi_{\alpha\theta}\rho$ stays bounded only if $\rho_i=\rho_j=0$ for each edge $v_iv_j$ with $\theta_{ij}=\theta_{\max}$. In other words, $\rho_i$ must be 0 at any vertex $v_i$ incident to an edge $e$ with $\theta(e)=\theta_{\max}$. This defines a subspace $W$ on which the quadratic form remains bounded. For all $\rho\not\in W$,
\[
    \lim_{\alpha\to\pi/\theta_{\max}}\rho\cdot\Delta^\varphi_{\alpha\theta}\rho = +\infty.
\]
We can thus define a quadratic form $Q$ on $W$ via
\[
    Q(\rho) = \lim_{\alpha\to\pi/\theta_{\max}}\rho\cdot\Delta^\varphi_{\alpha\theta}\rho.
\]

\begin{corollary}
    The total number of degrees $0<\alpha<\pi/\theta_{\max}$ that admit balanced homogeneous harmonic maps $f:C(\Gamma,\theta)\to C(\Gamma,\varphi)$, counted with multiplicity, is the sum of the dimension $dim(W^\perp)$ and the number of positive eigenvalues of the quadratic form $Q$ defined on $W$. 
\end{corollary}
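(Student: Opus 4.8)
The plan is to follow the eigenvalue branches of $\Delta^\varphi_{\alpha\theta}$ across the whole interval $0<\alpha<\pi/\theta_{\max}$ and count sign changes. Write $N_-(\alpha)$ for the number of negative eigenvalues of $\Delta^\varphi_{\alpha\theta}$. A degree $\alpha$ admits a balanced homogeneous harmonic map $f:C(\Gamma,\theta)\to C(\Gamma,\varphi)$ exactly when $\Delta^\varphi_{\alpha\theta}$ is singular, with multiplicity $\dim\ker\Delta^\varphi_{\alpha\theta}$ (just as in the Euclidean discussion), so the quantity to be computed is $\sum_{\alpha_0}\dim\ker\Delta^\varphi_{\alpha_0\theta}$, the sum over the degrees $\alpha_0$ in the interval. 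I will show this equals $N_-(0^+)-N_-\bigl((\pi/\theta_{\max})^-\bigr)$ and then evaluate the two one-sided limits.

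For the first step, recall from the proof of Theorem~\ref{2-d eigenvalues} that the eigenvalues organize into analytic branches $\lambda_1(\alpha)\le\cdots\le\lambda_{\#V(\Gamma)}(\alpha)$, each of which is strictly increasing by that theorem; in particular each branch has at most one zero, so there are only finitely many singular degrees in the interval. Between consecutive singular degrees no $\lambda_j$ vanishes, so each keeps a constant sign and $N_-$ is locally constant; at a singular degree $\alpha_0$ with $\dim\ker\Delta^\varphi_{\alpha_0\theta}=d$, strict monotonicity forces the $d$ branches vanishing there to be negative just to the left and positive just to the right, while the others keep their sign, so $N_-$ drops by exactly $d$ there. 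Thus $N_-$ is a non-increasing step function whose total decrease is $\sum_{\alpha_0}\dim\ker\Delta^\varphi_{\alpha_0\theta}$, and since $N_-$ is monotone the one-sided limits $N_-(0^+)$ and $N_-\bigl((\pi/\theta_{\max})^-\bigr)$ exist and their difference is this sum.

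Next I would evaluate the endpoints. The proof of Theorem~\ref{neg def} shows $\Delta^\varphi_{\alpha\theta}$ is negative definite whenever $\alpha<\pi/2\theta_{\max}$ and $\alpha\theta(e)<\varphi(e)$ for every edge, and both hold for all sufficiently small $\alpha>0$; hence $N_-(0^+)=\#V(\Gamma)$. For the right endpoint I would use the orthogonal splitting $\R^{\#V(\Gamma)}=W\oplus W^\perp$ with $W$ and $Q$ as defined just before the statement, and set $\alpha\theta_{\max}=\pi-\epsilon$, $m=\dim W$. On $W$ the Rayleigh quotient of $\Delta^\varphi_{\alpha\theta}$ converges uniformly on the unit sphere to that of $Q$, so by the min--max principle $\lambda_m(\alpha)$ stays bounded above as $\alpha\to\pi/\theta_{\max}$; meanwhile the restriction of $\rho\cdot\Delta^\varphi_{\alpha\theta}\rho$ to $W^\perp$ equals $\tfrac{1}{\sin\epsilon}$ times the form $\sum_{\theta(e)=\theta_{\max}}\bigl((\rho_i+\cos(\varphi_{ij})\rho_j)^2+\sin^2(\varphi_{ij})\rho_j^2\bigr)$, which is positive definite on $W^\perp$, plus a bounded remainder, so its Rayleigh quotient on $W^\perp$ tends to $+\infty$ and hence $\lambda_{m+1}(\alpha),\dots,\lambda_{\#V(\Gamma)}(\alpha)\to+\infty$. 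Thus exactly $\dim(W^\perp)$ branches escape to $+\infty$, and a Schur-complement estimate (the $W$--$W^\perp$ block of $\Delta^\varphi_{\alpha\theta}$ stays bounded while the $W^\perp$-block blows up) shows the remaining $m$ branches converge to the eigenvalues of $Q$ on $W$.

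Finally I would count the branches at the right endpoint: one converging to a positive eigenvalue of $Q$, or to $+\infty$, is eventually positive; one converging to a negative eigenvalue of $Q$ is eventually negative; and one converging to $0$ (a zero eigenvalue of $Q$) is strictly increasing with supremum $0$, hence \emph{negative throughout} $(0,\pi/\theta_{\max})$, so it is still counted by $N_-$ right up to the endpoint. Therefore $N_-\bigl((\pi/\theta_{\max})^-\bigr)=\dim W-\#\{\text{positive eigenvalues of }Q\}$, and combining with $N_-(0^+)=\#V(\Gamma)$ gives the total $\#V(\Gamma)-\dim W+\#\{\text{positive eigenvalues of }Q\}=\dim(W^\perp)+\#\{\text{positive eigenvalues of }Q\}$, as claimed. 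I expect the main obstacle to be the right-endpoint analysis: verifying that precisely $\dim(W^\perp)$ eigenvalues diverge while the rest limit to the spectrum of $Q$, together with the easy-to-overlook point that the zero eigenvalues of $Q$ must be counted among the negative eigenvalues of $\Delta^\varphi_{\alpha\theta}$ for every $\alpha<\pi/\theta_{\max}$ — missing this would throw off the count by exactly the nullity of $Q$.
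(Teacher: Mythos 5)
Your proposal is correct and follows essentially the same route as the paper: use the strict monotonicity of the analytic eigenvalue branches of $\Delta^\varphi_{\alpha\theta}$ (Theorem~\ref{2-d eigenvalues}) together with negative definiteness near $\alpha=0$ (Theorem~\ref{neg def}) to reduce the count to the asymptotic sign pattern at $\alpha\to\pi/\theta_{\max}$, where $\dim(W^\perp)$ branches diverge to $+\infty$ and the rest limit to the spectrum of $Q$ on $W$. Your treatment is somewhat more careful than the paper's three-sentence argument — in particular the observation that a branch limiting to a zero eigenvalue of $Q$ stays negative on the whole open interval, so the nullity of $Q$ is correctly excluded from the count — but the underlying mechanism is identical.
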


\begin{remark}
    If $\Delta^\varphi_{\alpha\theta}$ has a non-trivial kernel, the \emph{multiplicity} of $\alpha$ is the dimension of that kernel.
\end{remark}

\begin{proof}
    Following Theorem~\ref{2-d eigenvalues}, the only time that $\Delta^\varphi_{\alpha\theta}$ can have a non-trivial kernel is when one of the eigenvalues $\lambda_j(\alpha)$ crosses 0. According to Theorem~\ref{neg def} the matrices $\Delta^\varphi_{\alpha\theta}$ have only negative eigenvalues for $\alpha$ small, so we need only count the number of positive eigenvalues as $\alpha$ approaches $\pi/\theta_{\max}$. Any eigenvalue of $\Delta^\varphi_{\alpha\theta}$ that approaches $+\infty$ as $\alpha\to\pi/\theta_{\max}$ is certainly positive, and the remaining eigenvalues of $\Delta^\varphi_{\alpha\theta}$ approach the eigenvalues of the quadratic form $Q$ on $W$ as described above.
\end{proof}

\subsection{The singular degree $\alpha=\pi/\theta_{\max}$}

In stark contrast to Section~\ref{S euclidean singular}, balanced homogeneous harmonic maps $f:C(\Gamma,\theta)\to C(\Gamma,\varphi)$ of the singular degree $\alpha=\pi/\theta_{\max}$ are quite rare. First we must describe homogeneous harmonic maps in this case. Let $\Sigma\subset\Gamma$ be the subgraph consisting of all those edges $e\in E(\Gamma)$ with $\alpha\theta(e)=\pi$, along with their incident vertices.

\begin{lemma}\label{2-d singular determinism}
    A homogeneous harmonic map $f:C(\Gamma,\theta)\to C(\Gamma,\varphi)$ of degree $\alpha=\pi/\theta_{\max}$ is uniquely determined by a function $\rho:V(\Gamma)\backslash V(\Sigma)\to\hl$, along with a function
    \[
        \nu:\{(v,e)\in V(\Sigma)\times E(\Sigma) \vert v\in e\}\to\R
    \]
    subject to the constraints
    \[
        \nu(v,e)\geq\cos(\varphi(e))\nu(w,e)\quad\text{and}\quad \nu(w,e)\geq\cos(\varphi(e))\nu(v,e)\qquad\text{for all }e=vw\in E(\Gamma).
    \]
\end{lemma}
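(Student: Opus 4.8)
The plan is to treat the edges of $\Sigma$ directly via the $\alpha\theta_0=\pi$ case of Proposition~\ref{maps between sectors}, and to treat the remaining edges exactly as in Lemma~\ref{rho determines f}.

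First I would observe that if $v\in V(\Sigma)$ then $v$ is an endpoint of some edge $e$ with $\alpha\theta(e)=\pi$, and the analysis in the proof of Proposition~\ref{maps between sectors} shows that $f$ sends both boundary rays of the face $C(e)$ to the vertex of $C(\Gamma,\varphi)$; in particular $f(v,t)$ is the vertex, so $\abs{f(v,1)}=0$. Thus the only vertex data worth recording is the function $\rho:V(\Gamma)\backslash V(\Sigma)\to\hl$ given by $\rho(v)=\abs{f(v,1)}$, which we extend by $\rho\equiv0$ on $V(\Sigma)$. On a non-singular face $C(e)$, $e=v_0v_1\notin E(\Sigma)$, we have $\theta(e)<\theta_{\max}$ and hence $\alpha\theta(e)<\pi$, so the ``moreover'' clause of Proposition~\ref{maps between sectors} says $f|_{C(e)}$ is uniquely determined by $\abs{f(v_0,1)}=\rho(v_0)$ and $\abs{f(v_1,1)}=\rho(v_1)$, the constants in that proposition being explicit functions of $\rho(v_0),\rho(v_1),\theta(e),\varphi(e)$, exactly as in Lemma~\ref{rho determines f}.

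The substantive step is the singular faces. Fix $e=vw\in E(\Sigma)$ and use coordinates adapted to $v$ on both domain and target; since $\alpha\theta(e)=\pi$, the proof of Proposition~\ref{maps between sectors} gives $f|_{C(e)}(r,\theta)=r^\alpha\sin(\alpha\theta)(c_2,c_4)$ for a unique $(c_2,c_4)\in S_{\varphi(e)}$, and I would \emph{define} $\nu(v,e):=c_2$ --- equivalently, $\alpha r^\alpha\nu(v,e)$ is the $\theta$-derivative at $\theta=0$ of the $x$-component of $f|_{C(e)}$, the quantity appearing in Definition~\ref{2-d balancing}. Repeating the construction in coordinates adapted to $w$ defines $\nu(w,e)$. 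The key computation is that changing from the $v$-adapted to the $w$-adapted rectangular coordinates on the target face $S_{\varphi(e)}$ is the reflection of that sector across its angle bisector, so the direction $(c_2,c_4)$ has $w$-adapted components $(c_2',c_4')$ with $c_2'=\cos(\varphi(e))c_2+\sin(\varphi(e))c_4$ and, symmetrically, $c_2=\cos(\varphi(e))c_2'+\sin(\varphi(e))c_4'$. Hence $\nu(w,e)=\cos(\varphi(e))\nu(v,e)+\sin(\varphi(e))c_4$, which solves to $c_4=\bigl(\nu(w,e)-\cos(\varphi(e))\nu(v,e)\bigr)/\sin(\varphi(e))$ and likewise $c_4'=\bigl(\nu(v,e)-\cos(\varphi(e))\nu(w,e)\bigr)/\sin(\varphi(e))$. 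Since $0<\varphi(e)<\pi$, a direction $(c_2,c_4)$ lies in $S_{\varphi(e)}$ exactly when it lies on the nonnegative side of both bounding rays, i.e. $c_4\ge0$ and $c_4'\ge0$, which is precisely the stated pair of inequalities; and the pair $(\nu(v,e),\nu(w,e))$ recovers $(c_2,c_4)$ and therefore $f|_{C(e)}$.

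It remains to check the correspondence is a bijection. Given $\rho$ and $\nu$ subject to the constraints, assemble $f$ face by face from the unique maps above; continuity across an edge $C(u)$ holds because $f|_{C(u)}$ equals the ray $\abs{f(u,t)}=\rho(u)t^\alpha$ in the target edge $C(u)$ when $u\notin V(\Sigma)$, and the vertex of $C(\Gamma,\varphi)$ when $u\in V(\Sigma)$, in either case independently of which incident face is used to compute it; harmonicity on each face is built in. Conversely, any homogeneous harmonic $f$ produces such data by the two previous paragraphs, the constraints on $\nu$ being exactly the condition $(c_2,c_4)\in S_{\varphi(e)}$, and this data determines $f$. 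I expect the reflection/coordinate-change computation in the previous paragraph to be the only real obstacle; everything else is the bookkeeping already used for Euclidean and $k$-pod targets.
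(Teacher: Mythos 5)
Your proposal is correct and follows essentially the same route as the paper: force $\rho=0$ on $V(\Sigma)$, use Proposition~\ref{maps between sectors} on the non-singular faces, and on singular faces parametrize the pure-sine solution by the two boundary normal derivatives, with the sector condition $(c_2,c_4)\in S_{\varphi(e)}$ translating into exactly the two stated inequalities. The only (harmless) difference is a normalization: your $\nu(v,e)$ is the paper's $\nu(v,e)/\alpha$, and your reflection-across-the-bisector computation is the same calculation the paper does by differentiating the component along the ray $C(w)$ at $\theta=\theta(e)$.
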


\begin{proof}
    If $f:C(\Gamma,\theta)\to C(\Gamma,\varphi)$ has degree $\alpha=\pi/\theta_{\max}$, let $\rho:V(\Gamma)\to\hl$ be defined by $f(v,1) = (v,\rho(v))$. Any vertex $v\in V(\Sigma)$ is incident to an edge $e$ with $\alpha\theta(e)=\pi$, so according to Proposition~\ref{maps between sectors} $\rho(v)=0$. Thus we may replace $\rho$ with its restriction to $V(\Gamma)\backslash V(\Sigma)$. Using Proposition~\ref{maps between sectors}, such a $\rho$ uniquely determines the behavior of $f$ on each face $C(e)$ corresponding to $e\in E(\Gamma)\backslash E(\Sigma)$.
    
    Now we turn to those edges $e\in E(\Sigma)$. According to Proposition~\ref{maps between sectors}, the harmonic map $f:C(\Gamma,\theta)\to C(\Gamma,\varphi)$ can be represented in coordinates in the face $C(e)$ by the map
    \[
        u(r,\theta) = (x,y) = \Big(c_1r^\alpha\sin(\alpha\theta),c_2r^\alpha\sin(\alpha\theta)\Big).
    \]
    This is a map $S_{\theta(e)}\to S_{\varphi(e)}$, and we are using polar coordinates in the domain and rectangular coordinates in the target.
    
    If the positive $x$-axis corresponds to a vertex $v_1\in V(\Gamma)$ and the other edge of $S_{\theta(e)}$ (or $S_{\varphi(e))}$) corresponds to $v_2\in V(\gamma)$, consider $\nu_1=\nu(v_1,e)$ and $\nu_2=\nu(v_2,e)$ to be the normal derivatives used in the balancing condition of Definition~\ref{2-d balancing}. That is,
    \[
        \nu_1 = \frac{\partial x}{\partial\theta}(1,0) = c_1\alpha
    \]
    and
    \[
        \nu_2 = -\frac{\partial}{\partial\theta}\vert_{r=1,\theta=\theta(e)}\Big(x\cos\varphi(e)+y\sin\varphi(e)\Big) = \alpha\big(c_1\cos\varphi(e)+c_2\sin\varphi(e)\big).
    \]
    One can easily solve for $c_1$ and $c_2$.
    \[
        c_1 = \frac{\nu_1}{\alpha}\qquad\text{and}\qquad c_2 = \frac{\nu_2-\nu_1\cos\varphi(e)}{\alpha\sin\varphi(e)}.
    \]
    
    Thus the map $\nu$ uniquely determines the map $f$ in each face $C(e)$ corresponding to $e\in E(\Sigma)$. The only restriction on $\nu$ is that the image of $u$ must lie in $S_{\varphi_0}$. The conditions are very similar to those found in Proposition~\ref{maps between sectors}, namely $c_2\geq0$ and $c_1\geq c_2\cot(\varphi_0)$. Using the above equations to express $c_1$ and $c_2$ in terms of $\nu_1$ and $\nu_2$ yields precisely
    \[
        \nu_2\geq\nu_1\cos\varphi_0\qquad\text{and}\qquad \nu_1\geq\nu_2\cos\varphi_0.
    \]
\end{proof}

Now we discuss the balancing condition. On vertices $v_i\in V(\Gamma)\backslash V(\Sigma)$ one must still have
\[
    \sum_{v_j\sim v_i}\frac{\cos(\varphi_{ij})\rho_j-\cos(\alpha\theta_{ij})\rho_i}{\sin(\alpha\theta_{ij})} = 0,
\]
just as in Theorem~\ref{euclidean balanced}. And on those vertices $v_i\in V(\Sigma)$ one has $\rho_i=0$ and the balancing condition reads
\[
    \sum_{v_iv_j\in E(\Sigma)}\nu(v_i,v_iv_j) + \sum_{v_iv_j\in E(\Gamma)\backslash E(\Sigma)}\frac{\cos(\varphi_{ij})}{\sin(\alpha\theta_{ij})}\rho_j = 0.
\]

It seems unlikely to find a map balanced at just the vertices in $V(\Gamma)\backslash V(\Sigma)$ without taking $\rho\equiv 0$. Since the eigenvalue of $\Delta^\varphi_{\alpha\theta}$ are strictly increasing with $\alpha$ the chance that one passes through 0 precisely at a particular value of $\alpha$ seems low. Despite a similar concern in Section~\ref{S euclidean singular}, we described an abundance of balanced harmonic maps in Proposition~\ref{abundance} by choosing $\rho\equiv0$. Unfortunately such a result does not hold in the current situation.

\begin{theorem}\label{non-existence}
    The only balanced homogeneous harmonic map $f:C(\Gamma,\theta)\to C(\Gamma,\varphi)$ of degree $\alpha=\pi/\theta_{\max}$, determined by $\rho\equiv 0$ and some function $\nu$ as in Lemma~\ref{2-d singular determinism}, is the trivial map.
\end{theorem}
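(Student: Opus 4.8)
The plan is to turn the balancing conditions into a single global identity and then squeeze that identity against the admissibility constraints on $\nu$. With $\rho\equiv0$, every term in the balancing condition at a vertex $v_i\in V(\Gamma)\setminus V(\Sigma)$ vanishes automatically, while the balancing condition at a vertex $v_i\in V(\Sigma)$ loses its $E(\Gamma)\setminus E(\Sigma)$ contributions (each of which carries a factor $\rho_j=0$), so it reduces to
\[
    \sum_{\substack{e\in E(\Sigma)\\ e\ni v_i}}\nu(v_i,e)=0\qquad\text{for every }v_i\in V(\Sigma).
\]
Summing these identities over all $v_i\in V(\Sigma)$ and regrouping the double sum edge-by-edge — each $e=vw\in E(\Sigma)$ is incident to exactly the two vertices $v,w\in V(\Sigma)$ — yields the global relation
\[
    \sum_{e=vw\in E(\Sigma)}\Big(\nu(v,e)+\nu(w,e)\Big)=0.
\]

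First I would show that every summand here is non-negative. For $e=vw\in E(\Sigma)$ the constraints of Lemma~\ref{2-d singular determinism} read $\nu(v,e)\geq\cos(\varphi(e))\,\nu(w,e)$ and $\nu(w,e)\geq\cos(\varphi(e))\,\nu(v,e)$; adding them gives $\big(1-\cos(\varphi(e))\big)\big(\nu(v,e)+\nu(w,e)\big)\geq0$, and $0<\varphi(e)<\pi$ forces $1-\cos(\varphi(e))>0$, so $\nu(v,e)+\nu(w,e)\geq0$. Hence in the global relation a sum of non-negative numbers vanishes, which forces each of them to be zero: $\nu(w,e)=-\nu(v,e)$ for every $e=vw\in E(\Sigma)$. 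Substituting this back into the same two constraints gives $\nu(v,e)\big(1+\cos(\varphi(e))\big)\geq0$ and $\nu(v,e)\big(1+\cos(\varphi(e))\big)\leq0$, so $\nu(v,e)\big(1+\cos(\varphi(e))\big)=0$; once more $\varphi(e)<\pi$ gives $1+\cos(\varphi(e))>0$, so $\nu(v,e)=0$, and then $\nu(w,e)=0$ as well. Thus $\nu\equiv0$, and by Lemma~\ref{2-d singular determinism} (together with $\rho\equiv0$) the map $f$ is trivial.

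I do not expect a genuine obstacle here: the argument is elementary once the local balancing conditions are summed, and the only point requiring care is the bookkeeping in the regrouping step, together with checking that the two strict inequalities used — $1-\cos(\varphi(e))>0$ to get non-negativity and $1+\cos(\varphi(e))>0$ to get vanishing — both follow from nothing more than $0<\varphi(e)<\pi$. The conceptual content worth stressing is the contrast with Proposition~\ref{abundance}: for functions into $\R$ the relevant operator was the unconstrained linear map $d$, which acquired a kernel as soon as $\Sigma$ had an even cycle, whereas here the one-sided sector inequalities relating $\nu(v,e)$ and $\nu(w,e)$ make the system rigid and eliminate every non-trivial solution.
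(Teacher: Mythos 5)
Your proof is correct and follows essentially the same route as the paper: add the two sector constraints to get $\nu(v,e)+\nu(w,e)\geq0$, sum the balancing conditions over $V(\Sigma)$ and regroup by edges to force each such sum to vanish, then feed this back into the constraints to conclude $\nu\equiv0$. The only cosmetic difference is the final algebraic step (you use $(1+\cos\varphi(e))\nu(v,e)=0$ where the paper derives $\nu(v,e)=\nu(v,e)\cos^2(\varphi(e))$), and both rely only on $0<\varphi(e)<\pi$.
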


\begin{proof}
    The map $f:C(\Gamma,\theta)\to C(\Gamma,\varphi)$ determined by the functions $\rho\equiv 0$ and $\nu$ must satisfy the conditions listed in Lemma~\ref{2-d singular determinism}. For an edge $e=vw\in E(\Sigma)$ this says
    \[
        \nu(v,e) \geq \nu(w,e)\cos(\varphi(e)) \qquad\text{and}\qquad \nu(w,e)\geq\nu(v,e)\cos(\varphi(e)).
    \]
    Adding these together gives
    \[
        \nu(v,e)+\nu(w,e) \geq \Big(\nu(v,e)+\nu(w,e)\Big)\cos(\varphi(e)).
    \]
    Since $\cos(\varphi(e))$ cannot equal 1, we must have
    \[
        \nu(v,e)+\nu(w,e) \geq 0.
    \]
    
    Summing over all edges of $\Sigma$ we have
    \begin{align*}
        0 & \leq \sum_{e\in E(\Gamma)}\sum_{v\in e}\nu(v,e)\\
         & = \sum_{v\in V(\Gamma)}\sum_{e\ni v}\nu(v,e) = 0.
    \end{align*}
    The vanishing of the reordered sum follows from the balancing condition at the vertices of $\Sigma$. Thus each inequality from Lemma~\ref{2-d singular determinism} must in fact be an equality. So on the edge $e=vw\in E(\Gamma)$,
    \[
        \nu(v,e) = \nu(w,e)\cos(\varphi(e)) = \nu(v,e)\cos^2(\varphi(e)).
    \]
    Since $\cos(\varphi(e))$ never equals 1, $\nu(v,e) = 0$ for each pair $(v,e)$.
\end{proof}

As an immediate consequence we can extend the result of Proposition~\ref{constant theta and phi}, which discussed the case when $\theta(e)=\theta_0$ and $\varphi(e)=\varphi_0$ for all $e\in E(\Gamma)$. That Proposition insisted the only balanced harmonic maps had degree $\alpha = \varphi_0/\theta_0$, but did not discuss the possible singular degree $\alpha = \pi/\theta_0$.

\begin{corollary}
     Let $\theta(e)=\theta_0$ and $\varphi(e)=\varphi_0$ for all $e\in E(\Gamma)$. Then there is no non-trivial balanced homogeneous harmonic map $f:C(\Gamma,\theta)\to\ C(\Gamma,\varphi)$ of degree $\alpha=\pi/\theta_0$.
\end{corollary}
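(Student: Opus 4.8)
The plan is to reduce the statement directly to Theorem~\ref{non-existence}. The key observation is that when $\theta(e)=\theta_0$ for every edge, the singular degree $\alpha=\pi/\theta_{\max}=\pi/\theta_0$ satisfies $\alpha\theta(e)=\pi$ \emph{simultaneously for every} $e\in E(\Gamma)$. Consequently the subgraph $\Sigma\subset\Gamma$ defined just before Lemma~\ref{2-d singular determinism}, namely the union of all edges $e$ with $\alpha\theta(e)=\pi$ together with their incident vertices, is all of $\Gamma$: we have $E(\Sigma)=E(\Gamma)$ and $V(\Sigma)=V(\Gamma)$.

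First I would record this, and then invoke Lemma~\ref{2-d singular determinism}. Since $V(\Gamma)\backslash V(\Sigma)=\emptyset$, the function $\rho:V(\Gamma)\backslash V(\Sigma)\to\hl$ in that lemma carries no information; equivalently, arguing directly from Proposition~\ref{maps between sectors}, every vertex of $\Gamma$ is incident to an edge $e$ with $\alpha\theta(e)=\pi$, forcing $\rho(v)=0$ at every vertex, so $\rho\equiv0$. Thus \emph{any} homogeneous harmonic map $f:C(\Gamma,\theta)\to C(\Gamma,\varphi)$ of degree $\alpha=\pi/\theta_0$ is of the form treated in Theorem~\ref{non-existence}: it is determined by $\rho\equiv0$ together with a function $\nu$ on $\{(v,e)\in V(\Sigma)\times E(\Sigma)\mid v\in e\}$ subject to the stated inequality constraints $\nu(v,e)\geq\cos(\varphi(e))\,\nu(w,e)$ and $\nu(w,e)\geq\cos(\varphi(e))\,\nu(v,e)$.

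Finally, Theorem~\ref{non-existence} applies verbatim: it says the only balanced homogeneous harmonic map of degree $\alpha=\pi/\theta_{\max}$ determined by $\rho\equiv0$ and such a $\nu$ is the trivial map (the argument there adds the two inequalities on each edge, uses $\cos(\varphi_0)\neq1$ to get $\nu(v,e)+\nu(w,e)\geq0$, sums over all edges of $\Sigma=\Gamma$, and uses the balancing condition at vertices of $\Sigma$ to see the total is $0$, whence every inequality is an equality and $\nu\equiv0$). Since every candidate map is of exactly this form, there is no non-trivial balanced homogeneous harmonic map of degree $\alpha=\pi/\theta_0$, which is the claim.

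I do not expect any genuine obstacle here; the corollary is essentially a special case of Theorem~\ref{non-existence}, and the only point requiring a sentence of care is the verification that the constant-angle hypothesis forces $\Sigma=\Gamma$ at the singular degree, so that the hypothesis $\rho\equiv0$ of Theorem~\ref{non-existence} is automatic rather than an extra assumption. It is also worth remarking, for the reader, that this fills precisely the gap left open by Proposition~\ref{constant theta and phi}, which discussed only the non-singular degrees $\alpha=\varphi_0/\theta_0<\pi/\theta_0$.
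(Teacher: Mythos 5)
Your proposal is correct and follows exactly the paper's own argument: observe that the constant-angle hypothesis forces $\Sigma=\Gamma$ at the degree $\alpha=\pi/\theta_0$, so by Lemma~\ref{2-d singular determinism} every such map is determined by $\rho\equiv0$ and a function $\nu$, and then Theorem~\ref{non-existence} forces $\nu\equiv0$. Nothing further is needed.
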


\begin{proof}
    If $\theta(e)=\theta_0$ for all $e\in E(\Gamma)$ and $\alpha = \pi/\theta_0$ then the subgraph $\Sigma$ is all of $\Gamma$ itself. According to Lemma~\ref{2-d singular determinism} a homogeneous harmonic map $f:C(\Gamma,\theta)\to C(\Gamma,\varphi)$ is determined by just a function $\nu$. But now Theorem~\ref{non-existence} says that in order for $f$ to be balanced $\nu$ must vanish. In other words, $f$ must be the trivial map that sends all of $C(\Gamma,\theta)$ to the vertex of $C(\Gamma,\varphi)$.
\end{proof}

If one could balance the vertices in $V(\Gamma)\backslash V(\Sigma)$, then balancing the vertices in $V(\Sigma)$ amounts to choosing a function $\nu$ from a $2\#E(\Sigma)$-dimensional space subject to $\#V(\Sigma)$ balancing conditions. Such a $\nu$ certainly exists since $2\#E(\Sigma)\geq\#V(\Sigma)$ (in fact the inequality is strict if $\Sigma$ is not a disjoint collection of edges) and the $\#V(\Sigma)$ balancing conditions are independant (no two of them use the same value of $\nu$). But as was the case in Proposition~\ref{non-existence} the extra inequalities restricting $\nu$ are unlikely to be satisfied.

\section{Collapsing cones}\label{S puncture}

This section deals with the limits of the cones $C(\Gamma,t\theta)$ as $t\to0$. These spaces can also be seen as tangent cones to the ideal hyperbolic simplicial complexes of \cite{me-gras-1} at punctured vertices.

The na\"ive approach is to simply let $t$ tend to 0 in $C(\Gamma,t\theta)$. Each face of $C(\Gamma)$ will collapse to a half-line, its bounding edges merging. If $\Gamma$ is a connected graph then all the faces and edges of $C(\Gamma)$ will collapse down to a single half-line, $\hl$.

The results from Section~\ref{S euclidean target} would turn into a search for a homogeneous harmonic map $f:\hl\to\R$, which must be simply a degree 1 map $f(x)=ax$. Such a map also generalizes the results of Section~\ref{S k-pod target}, mapping the collapsed domain to a single edge of the $k$-pod target. If the domain collapses in Section~\ref{S singular target} in this way then a similar map can be constructed into a single edge or face of the target. And if only the target collapses then one searches for a balanced homogeneous harmonic function $f:C(\Gamma,\theta)\to\hl$. Unfortunately no such map exists, as the following result will show.

\begin{proposition}
    If $f:C(\Gamma,\theta)\to\R$ is a balanced homogeneous harmonic map of degree $\alpha>0$ then the average of $f$ on any ball centered at the vertex of $C(\Gamma,\theta)$ is 0.
\end{proposition}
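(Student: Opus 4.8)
The plan is to reduce the statement to the balancing conditions of Definition~\ref{euclidean balancing} via an integration by parts carried out face by face. Fix $R>0$ and let $B_R$ be the ball of radius $R$ about the vertex of $C(\Gamma,\theta)$; in cone coordinates this is $\{(x,t):t\le R\}$, and it meets each face $C(e)$ in the sector piece $\{(r,\theta):0\le r\le R,\ 0\le\theta\le\theta(e)\}$. Since $f$ is homogeneous of degree $\alpha$ and harmonic, Lemma~\ref{triggy} lets us write $f|_{C(e)}(r,\theta)=r^\alpha\rho_e(\theta)$ with $\rho_e''=-\alpha^2\rho_e$. Integrating in polar coordinates,
\[
    \int_{B_R\cap C(e)}f = \int_0^R r^{\alpha+1}\,dr\int_0^{\theta(e)}\rho_e(\theta)\,d\theta = \frac{R^{\alpha+2}}{\alpha+2}\int_0^{\theta(e)}\rho_e(\theta)\,d\theta,
\]
and the volume of $B_R$ is $\tfrac{R^2}{2}\sum_e\theta(e)>0$. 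Thus, after summing over faces and dividing, it suffices to prove $\sum_{e\in E(\Gamma)}\int_0^{\theta(e)}\rho_e(\theta)\,d\theta=0$.

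The key observation is that the ODE $\rho_e=-\alpha^{-2}\rho_e''$ (with $\alpha>0$, so the division is legitimate) converts each face integral into a difference of boundary derivatives: $\int_0^{\theta(e)}\rho_e(\theta)\,d\theta = -\alpha^{-2}\big(\rho_e'(\theta(e))-\rho_e'(0)\big)$. Now for each edge $e=v_1v_2$ fix the two representations $\rho_{e,v_1}$ and $\rho_{e,v_2}$ of $f|_{C(e)}$ in the coordinates adapted to $v_1$ and to $v_2$; they are related by $\rho_{e,v_2}(\theta)=\rho_{e,v_1}(\theta(e)-\theta)$, so the integral above is invariant under $\theta\mapsto\theta(e)-\theta$ and hence intrinsic to the face. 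Differentiating the relation at $\theta=0$ gives $\rho_{e,v_2}'(0)=-\rho_{e,v_1}'(\theta(e))$, so
\[
    \int_0^{\theta(e)}\rho_e(\theta)\,d\theta = \frac{1}{\alpha^2}\Big(\rho_{e,v_1}'(0)+\rho_{e,v_2}'(0)\Big),
\]
one ``outgoing normal derivative'' at each endpoint of $e$.

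The last step is a bookkeeping reorganization: since each edge contributes exactly one term $\rho_{e,v}'(0)$ at each of its two endpoints,
\[
    \sum_{e\in E(\Gamma)}\int_0^{\theta(e)}\rho_e(\theta)\,d\theta = \frac{1}{\alpha^2}\sum_{v\in V(\Gamma)}\ \sum_{e\ni v}\rho_{e,v}'(0).
\]
By Definition~\ref{euclidean balancing}, balancing of $f$ along $C(v)$ says precisely $\sum_{e\ni v}\partial_\theta u_{e}(r,0)=\alpha r^{\alpha}\sum_{e\ni v}\rho_{e,v}'(0)=0$, so every inner sum vanishes and the average is $0$. I do not expect a serious obstacle; the only point requiring care is keeping the adapted-coordinate bookkeeping consistent, so that the derivative at the far end of an edge is correctly matched (up to the sign from $\theta\mapsto\theta(e)-\theta$) with the balancing-condition derivative at the opposite vertex. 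Note also that the computation is independent of $R$, so the conclusion holds for every ball at once; the same estimate applied to an annulus $\{r_0\le r\le R\}$ shows the mean-value-zero property there as well.
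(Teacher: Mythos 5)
Your proof is correct and follows essentially the same route as the paper: substitute $\rho_e=-\alpha^{-2}\rho_e''$ into the face-by-face polar integral, integrate to produce the boundary derivatives $\rho_e'(\theta(e))-\rho_e'(0)$, regroup the edge-sum as a vertex-sum of outgoing normal derivatives, and invoke the balancing condition. The only difference is notational: you track orientations via the two adapted-coordinate representations and the relation $\rho_{e,v_2}'(0)=-\rho_{e,v_1}'(\theta(e))$, where the paper uses the sign $(-1)^{\psi_e(v)/\theta(e)}$ from its eigenvalue-problem formulation in Section~\ref{S eigenvalues}.
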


An immediate consequence is that a balanced homogeneous harmonic function $f:C(\Gamma,\theta)\to\hl$ must be identically 0.

\begin{proof}
    Just as in Section~\ref{S eigenvalues}, the map $f:C(\Gamma,\theta)\to\R$ can be written as $f(x,t) = \rho(x)t^\alpha$ for some $\rho:\Gamma\to\R$. After choosing coordinate functions $\psi_e:e\to[0,\theta(e)]$ for each $e\in E(\Gamma)$ and defining $\rho_e=\rho\circ \psi_e^{-1}$, $\rho$ is a solution to the problem
    \[
        \rho''_e=-\alpha^2\rho_e\text{ for all }e\in E(\gamma)\qquad\text{and}\qquad \sum_{v\in e}(-1)^{\psi_e(v)/\theta(e)}\rho'_e(\psi(e)) = 0\text{ for all }v\in V(\Gamma).
    \]

    The average of $f$ over a ball centered at the vertex of $C(\Gamma,\theta)$ can be computed as follows.
    \begin{align*}
        \iint_{B_r(0)} fdA & = \sum_{e\in E(\Gamma)}\iint_{C_r(0)\cap C(e)}fdA\\
         & = \sum_{e\in E(\Gamma)}\int_0^r\int_0^{\theta(e)}\rho_e(x)t^\alpha tdxdt\\
         & = \frac{-r^{\alpha+2}}{\alpha^2(\alpha+2)}\sum_{e\in E(\Gamma)}\int_0^{\theta(e)}\rho''_e(x)dx\\
         & = \frac{-r^{\alpha+2}}{\alpha^2(\alpha+2)}\sum_{e\in E(\Gamma)}\Big(\rho'_e(\theta(e))-\rho'_e(0)\Big)\\
         & = \frac{r^{\alpha+2}}{\alpha^2(\alpha+2)}\sum_{v\in V(\Gamma)}\sum_{e\ni v}(-1)^{\psi_e(v)/\theta(e)}\rho'_e(\psi(v))\\
         & = 0.
    \end{align*}
\end{proof}

\begin{remark}
    This result can also be seen as the $L^2$-orthogonality between the eigenfunction $\rho$ with eigenfunvalue $\alpha$ and the eigenfunction $\rho_0$ with $\rho_0(x)=1$, whose eigenvalue is $0$. Similar integrations by parts in fact imply that all eigenfunctions with distinct eigenvalues are orthogonal in $L^2$.
\end{remark}

Evidently there is not much happening when we allow the metrics on our cones to collapse, so more care must be taken. As we scale the angle measures by $t\to0$, we will dilate the cones so that the curves originally at distance 1 from the vertex maintain a constant length. In a face $S_{\theta_0}$ this can be achieved with the following 4 step procedure:
\begin{enumerate}
    \item In polar coordinates map $(r,\theta)\mapsto(r^t,\theta)$. This does not change $S_{\theta_0}$ as a set, but is useful for defining coefficients later on.
    \item In polar coordinates map $(r,\theta)\mapsto(r,t\theta)$. This maps $S_{\theta_0}$ to $S_{t\theta_0}$.
    \item In rectangular coordinates map $(x,y)\mapsto(x-1,y)$. This has the effect of moving the curve originally at distance 1 from the vertex so it now passes through the origin.
    \item Dilate the region $S_{t\theta_0}$ by a factor of $1/t$. This moves the vertex to the point $(-1/t,0)$ in recangular coordinates, and rescales the curve now passing through the origin to have length $\theta_0$ as it did at the start.
\end{enumerate}
This transformation can be accomplished much more concisely in complex coordinates by the map
\[
    z \mapsto \frac{z^t-1}{t}.
\]
In the limit as $t\to0$ this transformation converges to $z\mapsto\log(z)$, and the image of $S_{\theta_0}$ under this transformation is a strip, described in rectangular coordinates by
\[
    R_{\theta_0} = \{(x,y)\in\R^2\vert 0\leq y \leq \theta_0\}.
\]

Thus we define the space
\[
    C^*(\Gamma,\theta)=\Gamma\times\R
\]
with a product metric. The factor $\R$ is endowed with its usual Euclidean metric, and $\Gamma$ is endowed with a path metric where each edge $e\in E(\Gamma)$ has length $\theta(e)$. Note that in the definition of $C^*(\Gamma,\theta)$ there is absolutely no difficulty if the function $\theta$ takes values larger than $\pi$.

\subsection{Balanced harmonic functions into $\R$}

A homogeneous map $u:S_{\theta_0}\to\R$ can be described in polar coordinates as $u(r,\theta)=r^\alpha\rho(\theta)$ for some $\alpha>0$. The complex exponential transforms the rectangular coordinates $(x,y)$ to the polar coordinates $(r=e^{x},\theta=y)$. So precomposing with the complex exponential gives a map $v = u\circ\exp:R_{\theta_0}\to\R$ given in rectangular coordinates by
\[
    v(x,y) = u(e^x,y) = e^{\alpha x}\rho(y).
\]
When we move to functions $f:C^*(\Gamma,\theta)\to\R$ this suggests that we should consider the class of separated functions, a broader class than homogeneous functions.

\begin{definition}\label{separated}
    A map $f:C^*(\Gamma,\theta)\to\R$ is separated if there are functions $\rho:\Gamma\to\R$ and $\tau:\R\to\R$ so that
    \[
        f(x,t) = \rho(x)\tau(t).
    \]
\end{definition}

Since the transformation $z\mapsto\log(z)$ is conformal on the interior of $S_{\theta_0}$, a homogeneous harmonic function $u(r,\theta)=r^\alpha\rho(\theta)$ is transformed to a \emph{harmonic} function $v(x,y)=e^{\alpha x}\rho(y)$. This suggests that the search for balanced harmonic functions $v:R_{\theta_0}\to\R$ will be equivalent to the search in Section~\ref{S euclidean target}.

Of course the class of separated harmonic functions is larger than those maps described above, so some care must be taken. Using coordinates $(x,t)$ on $C^*(\Gamma,\theta) = \Gamma\times\R$, a separated function has the form
\[
    f(x,t) = \rho(x)\tau(t).
\]
By the theory of separation of variables, such a function is harmonic if both $\rho$ and $\tau$ are eigenfunctions of the second derivative operator with opposite eigenvalues. Choosing isometries $\psi_e:e\to[0,\theta(e)]$ for each $e\in E(\Gamma)$, let $\rho_e = \rho\circ\psi_e^{-1}$. Then for each $e$ we must have $\tau''=\lambda \tau$ and $\rho''_e=-\lambda \rho_e$ for some $\lambda\in\R$.

The eigenfunctions $u''=cu$ are given by
\[
    u(x) =  \begin{cases}
        A\cosh(\sqrt{c}x)+B\sinh(\sqrt{c}x), & c>0\\
        A+Bx, & c=0\\
        A\cos(\sqrt{-c}x)+B\sin(\sqrt{-c}x), & \lambda<0.
    \end{cases}
\]
Since $\tau$ is independent of $e\in E(\gamma)$, the eigenvalue $-\lambda$ for $\rho_e$ must also be independent of $e$.

Using the isometries $\psi_e:e\to[0,\theta(e)]$ and the representations $\rho_e=\rho\circ\psi_e^{-1}$ the balancing condition of Definition~\ref{euclidean balancing} can be generalized to separated functions as follows.

\begin{definition}\label{balanced separated}
    A separated function $f(x,t)=\rho(x)\tau(t)$ from $C^*(\Gamma,\theta)$ to $\R$ is balanced along the edge $\{v\}\times\R\subset C^*(\Gamma,\theta)$ corresponding to $v\in V(\Gamma)$ if
    \[
        \sum_{e\ni v}(-1)^{\psi_e(v)/\theta(e)}\rho'_e(\psi_e(v)) = 0.
    \]
    The function $f$ is balanced if it is balanced along each edge of $C^*(\Gamma,\theta)$.
\end{definition}

Now we are in a position to fully understand balanced separated harmonic functions and their relationship to the balanced homogeneous harmonic functions of Section~\ref{S euclidean target}.

\begin{theorem}\label{balanced separated functions}
    Let $f:C^*(\Gamma,\theta)\to\R$ be a separated harmonic function, $f(x,t) = \rho(x)\tau(t)$. The function $f$ is balanced if and only if one of the following occurs.
    \begin{enumerate}
        \item $f(x,t) = A+Bt$, or
        \item there is a number $\alpha>0$ so that $g(x,t)=\rho(x)t^\alpha:C(\Gamma,\theta)\to\R$ is a balanced homogeneous harmonic function of degree $\alpha$.
    \end{enumerate}
    In the second case $\tau(t) = A\cosh(\alpha t)+B\sinh(\alpha t) = Ce^{\alpha t}+De^{-\alpha t}$.
\end{theorem}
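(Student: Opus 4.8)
The plan is to reduce everything to identifying the separation constant. Since $f(x,t)=\rho(x)\tau(t)$ is a separated harmonic function, the discussion preceding Definition~\ref{balanced separated} already produces a common $\lambda\in\R$ with $\tau''=\lambda\tau$ and $\rho''_e=-\lambda\rho_e$ for every $e\in E(\Gamma)$, where $\rho_e=\rho\circ\psi_e^{-1}$ for the isometries $\psi_e:e\to[0,\theta(e)]$. The crucial observation is that the balancing condition of Definition~\ref{balanced separated} constrains only $\rho$, not $\tau$. We may assume $\rho\not\equiv0$ and $\tau\not\equiv0$, since otherwise $f\equiv0$ has the form $A+Bt$ with $A=B=0$.

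The key step is to show $\lambda\geq0$. First I would pair the edgewise equation $\rho''_e=-\lambda\rho_e$ with $\rho_e$, integrate over $[0,\theta(e)]$, integrate by parts, and sum over $e\in E(\Gamma)$. Exactly as in the proof of Theorem~\ref{rayleigh} (and the area computation of the preceding section), the boundary terms reorganize vertex by vertex into
\[
    \sum_{v\in V(\Gamma)}\rho(v)\sum_{e\ni v}(-1)^{\psi_e(v)/\theta(e)}\rho'_e(\psi_e(v)),
\]
which vanishes because $f$ is balanced. What remains is $-E[\rho]=-\lambda\norm{\rho}^2$, so $\lambda=E[\rho]/\norm{\rho}^2\geq0$; in particular the case $\lambda<0$ (hyperbolic in $y$, exponential in $t$) never yields a balanced function.

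Next I would split the two remaining cases. If $\lambda=0$, then $E[\rho]=0$ forces each $\rho_e$ to be constant, and since $\Gamma$ is connected $\rho$ is a global constant $\rho_0$; then $\tau''=0$ gives $\tau(t)=a+bt$, so $f(x,t)=\rho_0(a+bt)$ has the form $A+Bt$, alternative (1). If $\lambda>0$, set $\alpha=\sqrt{\lambda}>0$; then $\rho''_e=-\alpha^2\rho_e$ on every edge and $\rho$ satisfies the balancing boundary conditions, which by the reformulation in Section~\ref{S eigenvalues} (equivalently Theorem~\ref{euclidean balanced} together with Lemma~\ref{triggy}) is precisely the statement that $g(x,t)=\rho(x)t^\alpha:C(\Gamma,\theta)\to\R$ is a balanced homogeneous harmonic function of degree $\alpha$, alternative (2). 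Finally $\tau''=\alpha^2\tau$ forces $\tau(t)=A\cosh(\alpha t)+B\sinh(\alpha t)=Ce^{\alpha t}+De^{-\alpha t}$, as claimed.

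For the converse I would simply observe that $A+Bt$ is separated ($\rho\equiv1$, $\lambda=0$), harmonic, and balanced because $\rho'_e\equiv0$; and if $g(x,t)=\rho(x)t^\alpha$ is a balanced homogeneous harmonic function then $\rho$ solves $\rho''_e=-\alpha^2\rho_e$ with the balancing boundary conditions, so the given $f$, which has the same spatial part $\rho$, is balanced. The only genuine work is the integration-by-parts identity that forces $\lambda\geq0$; there the one thing requiring care is the bookkeeping of the orientation signs $(-1)^{\psi_e(v)/\theta(e)}$ when collecting the boundary terms at each vertex, but since this duplicates a manipulation already carried out, I do not expect a real obstacle.
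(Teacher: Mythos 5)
Your proposal is correct and follows essentially the same route as the paper: identify the common separation constant $\lambda$, use the balanced eigenvalue problem of Section~\ref{S eigenvalues} to conclude $\lambda=E[\rho]/\norm{\rho}^2\geq0$ (you carry out the integration by parts directly, which is exactly the content of Theorem~\ref{rayleigh}), and then split into the cases $\lambda=0$ (forcing $\rho$ constant and $\tau$ linear) and $\lambda=\alpha^2>0$ (recovering a balanced homogeneous harmonic function of degree $\alpha$ and $\tau=Ce^{\alpha t}+De^{-\alpha t}$). Your handling of the $\lambda=0$ case via $E[\rho]=0$ is a slightly cleaner shortcut than the paper's appeal to Proposition~\ref{degree 0}, but the argument is the same in substance.
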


\begin{remark}
    At the beginning of this subsection we saw that the functions coming from balanced homogeneous functions $\tilde{f}:C(\Gamma,\theta)\to\R$ corresponded to $f=\rho\tau$ with $\tau(t)=Ce^{\alpha t}$. We also develop the additional solutions $f(x,t)=A+Bt$, which correspond to $\alpha = 0$. In fact the balanced homogeneous function $\tilde{f}(x,t) = 1$ of degree $0$ gives rise to the balanced separated function $f(x,t) = 1$, corresponding to $B=0$.

    See also that the functions $f(x,t)=A$ and $f(x,t) = \rho(x)e^{\alpha t}$ are the only ones with $\lim_{t\to-\infty}f(x,t) = 0$.
\end{remark}

\begin{proof}
    Let $f:C^*(\Gamma,\theta)\to\R$ be a balanced separated harmonic function. With isometries $\psi_e:e\to[0,\theta(e)]$ and representations $\rho_e=f\circ\psi_e^{-1}:[0,\theta(e)]\to\R$ in coordinates, the discussion above implies that the functions $\rho_e$ satisfy
    \[
        \rho''_e = -\lambda \rho_e\text{ for all }e\in E(\Gamma) \qquad\text{and}\qquad \sum_{e\ni v}(-1)^{\psi_e(v)/\theta(e)}\rho'_e(\psi(v)) = 0\text{ for all }v\in V(\Gamma).
    \]
    This is precisely the eigenvalue problem studied in Section~\ref{S eigenvalues}. According to Theorem~\ref{rayleigh}, the solutions $\rho$ to this problem are critical points of the Rayleigh quotient
    \[
        R[\rho] = \frac{\sum_e \norm{\rho'_e}^2}{\sum_e\norm{\rho_e}^2},
    \]
    where $\norm{-}$ indicates the $L^2$ norm on an interval. Moreover the eigenvalue $-\lambda$ of the solution $\rho$ is precisely $-R[\rho]$, so $\lambda\geq 0$. Let $\alpha\geq 0$ be such that $\lambda = \alpha^2$.

    If $\alpha = 0$, then $\rho_e(x) = A_e+B_ex$ for each $e\in E(\Gamma)$, i.e. $\rho$ is edge-wise linear. Just as in Proposition~\ref{degree 0}, though, $\rho$ is only balanced if it is constant. In this case $\rho\equiv c$ for some constant $c$. Since $\lambda = 0$ the function $\tau(t)$ solves $\tau'' = 0$, so $\tau$ too is linear. say $\tau(t) = a+bt$. In this case
    \[
        f(x,t) = \rho(x)\tau(t) = ac+bct = A+Bt.
    \]

    If $\alpha>0$ then according to Section~\ref{S eigenvalues} the function $\rho:\Gamma\to\R$ fits into a balanced homogeneous harmonic function $g:C(\Gamma,\theta)\to\R$ via
    \[
        g(x,t) = \rho(x)t^\alpha.
    \]
    In this case $\tau''=\alpha^2\tau$, so $\tau(t) = A\cosh(\alpha t)+B\sinh(\alpha t) = Ce^{\alpha t}+De^{-\alpha t}$.
\end{proof}

\subsection{Balanced harmonic maps between cones}

Just as in Section~\ref{S singular target} we will consider only maps $f:C^*(\Gamma,\theta)\to C^*(\Gamma,\varphi)$ between spaces with the same combinatorial structure, and moreover only those maps that map the edges $v\times\R$ and faces $e\times R$ of $C^*(\Gamma)$ to themselves. If $\psi_e:e\to[0,\theta(e)]$ is an isometry for the metric $\theta$, then we define a map $\Psi_e:e\times\R\to R_{\theta(e)}$ by
\[
    \Psi_e(x,t) = \big(t,\psi_e(x)\big).
\]
Taking isometries $\psi^\theta_e:e\to[0,\theta(e)]$ and $\psi^\varphi_e:e\to[0,\varphi(e)]$ for each $e\in E(\Gamma)$, ensure that $\psi^\theta_e(v)/\theta(e) = \psi^\varphi_e(v)/\varphi(e)$ for each $v\in e$, i.e. $\psi^\theta_e$ and $\psi^\varphi_e$ induce the same orientation on $e$. Then the map $f\vert_{e\times\R}$ can be represented in coordinates by
\[
    u_e = \Psi^\varphi_e\circ f\circ (\Psi^\theta_e)^{-1}:R_{\theta(e)}\to R_{\varphi(e)}.
\]
But only some such maps are limits of homogeneous maps under collapsing metrics.

A homogeneous map $u:S_{\theta_0}\to S_{\varphi_0}$ can be described in polar coordinates as $u(r,\theta)=\big(r^\alpha\rho(\theta),\phi(\theta)\big)$ for some $\alpha>0$. In Section~\ref{S singular target} we used rectangular coordinates on the target to describe such maps, but the translation to polar coordinates is not difficult. The complex exponential transforms the rectangular coordinates $(x,y)$ to the polar coordinates $(r=e^{x},\theta=y)$, so conjugating with the complex exponential gives a map $v = \log\circ u\circ\exp:R_{\theta_0}\to R_{\varphi_0}$ given in rectangular coordinates by
\[
    v(x,y) = \log u(e^x,y) = \log\Big(r^{\alpha x}\rho(y)e^{i\phi(y)}\Big) = \big(\alpha x + \log\rho(y), \phi(y)\big).
\]

Though the presence of the complex logarithm above turns a harmonic $u:S_{\theta_0}\to S_{\varphi_0}$ into a map $v:R_{\theta_0}\to R_{\varphi_0}$ that is not necessarily harmonic, some (non-harmonic) choices of homogeneous $u$ still result in harmonic $v$. As both $R_{\theta_0}$ and $R_{\varphi_0}$ are Euclidean, the map $v$ is harmonic if and only if both $\alpha x+\log\rho(y)$ and $\phi(y)$ are. Unfortunately this only happens when $\rho(y)=e^{\beta y+a}$ and $\phi(y) = \gamma y+b$ for some constants $\beta,\gamma$, in which case $f$ is linear.

When we move to functions $f:C^*(\Gamma,\theta)\to C^*(\Gamma,\varphi)$ we will consider only maps that send each line $\{x\}\times\R\subset C^*(\Gamma,\theta)$ to a corresponding line $\{x'\}\times\R\subset C^*(\Gamma,\varphi)$. Representing the map $f$ in coordinates in the face $e\times\R$ by the map $u_e:R_{\theta(e)}\to R_{\varphi(e)}$, the maps $u_e$ take the form
\[
    u_e(x,y) = \big(\rho_e(x,y),z_e(y)\big).
\]

For $f$ to be harmonic each $u_e$ must be harmonic. In particular $z''_e=0$, so $z_e$ is linear. And in order to map each edge $v\times\R$ to itself we must in fact have
\[
    z_e(y) = \frac{\varphi(e)}{\theta(e)}y.
\]
And the balancing condition of Definition~\ref{2-d balancing} can be generalized as follows.

\begin{definition}
    The map $f:C^*(\Gamma,\theta)\to C^*(\Gamma,\varphi)$ is balanced along the edge $v\times\R$ corresponding to $v\in V(\Gamma)$ if
    \[
        \sum_{e\ni v}(-1)^{\psi^\theta_e(v)/\theta(e)}\frac{\partial \rho_e}{\partial y}(\Psi^\theta_e(v,t)) = 0\text{ for all }t\in\R.
    \]
    And the map $f$ is balanced if it is balanced along each edge of $C^*(\Gamma,\theta)$.
\end{definition}

Unlike in the case of functions to $\R$, the only harmonic maps $f:C^*(\Gamma,\theta)\to C^*(\Gamma,\varphi)$ that came from maps $C(\Gamma,\theta)\to C(\Gamma,\varphi)$ were the piecewise linear ones. And in fact there are very few \emph{balanced} piecewise lienar maps.

\begin{proposition}\label{final}
    If $f:C^*(\Gamma,\theta)\to C^*(\Gamma,\varphi)$ is a balanced harmonic map whose restriction to each face $e\times\R$ is represented by a linear map $u_e:R_{\theta(e)}\to R_{\varphi(e)}$, then each $u_e$ has the form
    \[
        u_e(x,y) = \left(ax+b,\frac{\varphi(e)}{\theta(e)}y\right)
    \]
    for some constants $a,b$ that are independent of $e$. In particular, $\rho_e$ is independent of $y$.
\end{proposition}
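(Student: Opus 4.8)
The plan is to use the two facts not yet invoked: the continuity of $f$ across the edges $v\times\R$, and the balancing condition itself. Since each $u_e$ is assumed affine, the discussion preceding the statement already gives $u_e(x,y)=(\rho_e(x,y),z_e(y))$ with $z_e(y)=\tfrac{\varphi(e)}{\theta(e)}y$ and $\rho_e(x,y)=a_ex+b_ey+c_e$ for constants $a_e,b_e,c_e$ depending on $e$. The goal is thus to show that $a_e$ and $c_e$ do not depend on $e$ and that $b_e=0$ for every $e$.

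First I would orient every edge of $\Gamma$ and choose the isometries so that the tail of $e$ corresponds to coordinate $0$ and the head to coordinate $\theta(e)$ (resp.\ $\varphi(e)$), consistent with the requirement that $\psi^\theta_e$ and $\psi^\varphi_e$ induce the same orientation. Evaluating $f(v,t)$ inside a face $e\times\R$ incident to $v$, and using that $f$ sends $v\times\R$ to itself, I obtain that the $\R$-coordinate of $f(v,t)$ equals $a_et+c_e$ when $v$ is the tail of $e$ and $a_et+b_e\theta(e)+c_e$ when $v$ is the head of $e$. Since this function of $t$ is intrinsic to $f$ and cannot depend on the face used to compute it, the slopes $a_e$ must agree over all edges meeting a common vertex; connectedness of $\Gamma$ then forces $a_e\equiv a$. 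Writing the $\R$-coordinate of $f(v,\cdot)$ as $t\mapsto at+k_v$ defines a function $k\colon V(\Gamma)\to\R$, and matching constant terms edge by edge gives $c_e=k_{\mathrm{tail}(e)}$ and $b_e=\bigl(k_{\mathrm{head}(e)}-k_{\mathrm{tail}(e)}\bigr)/\theta(e)$.

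Next I would feed this into the balancing condition. Since $\rho_e$ is affine, $\partial\rho_e/\partial y=b_e$ is constant, so balancing along $v\times\R$ reads $\sum_{e\ni v}(-1)^{\psi^\theta_e(v)/\theta(e)}b_e=0$. Substituting $b_e=\bigl(k_{\mathrm{head}(e)}-k_{\mathrm{tail}(e)}\bigr)/\theta(e)$ and checking that the sign $(-1)^{\psi^\theta_e(v)/\theta(e)}$ exactly records whether $v$ is the head or the tail of $e$, each summand collapses to $(k_w-k_v)/\theta(vw)$, where $w$ is the neighbour of $v$ across $e$. Hence $\sum_{w\sim v}(k_w-k_v)/\theta(vw)=0$ at every vertex, which says precisely that $k$ lies in the kernel of the edge-weighted graph Laplacian with weights $1/\theta(e)$, i.e.\ the operator $\Delta$ of Proposition~\ref{degree 0}. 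On the connected graph $\Gamma$ that kernel is spanned by $\mathbb{1}$, so $k$ is constant, say $k\equiv b$. Then $b_e=0$ and $c_e=b$ for every $e$, giving $u_e(x,y)=\bigl(ax+b,\tfrac{\varphi(e)}{\theta(e)}y\bigr)$ with $a,b$ independent of $e$, as claimed.

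The main obstacle is the bookkeeping in the continuity step: one must track carefully how the two isometries $\psi^\theta_e$ and $\psi^\varphi_e$ interact with the coordinate maps $\Psi^\theta_e,\Psi^\varphi_e$ and with the sign $(-1)^{\psi^\theta_e(v)/\theta(e)}$, so that the balancing sum genuinely telescopes into the weighted Laplacian rather than into some sign-twisted variant. Once the orientations and signs are pinned down, the conclusion follows at once from Proposition~\ref{degree 0}.
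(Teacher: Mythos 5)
Your proof is correct and follows essentially the same route as the paper: both arguments reduce the balancing condition for face-wise linear maps to membership in the kernel of the edge-weighted graph Laplacian with weights $1/\theta(e)$, which on the connected graph $\Gamma$ is spanned by $\mathbb{1}$, forcing $b_e=0$, and then use continuity across the edges $v\times\R$ to make $a_e$ and $c_e$ global constants. The only difference is the order of the two steps (you invoke continuity before balancing, the paper does the reverse), which is immaterial.
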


\begin{proof}
    If each function $\rho_e$ is linear, then write $\rho_e(x,y) = a_ex+b_ey+c_e$. If $f(v,0)=(v,\rho(v))$ for some function $\rho:V(\Gamma)\to\R$, then
    \[
        \frac{\partial \rho_e}{\partial y} = b_e = \frac{\rho\circ(\psi^\theta_e)^{-1}(\theta(e))-\rho\circ(\psi^\theta_e)^{-1}(0)}{\theta(e)}.
    \]
    The balancing condition at a vertex $v\in V(\Gamma)$ then reads
    \begin{align*}
        0 & = \sum_{e\ni v}(-1)^{\psi^\theta_e(v)/\theta(e)}\frac{\rho\circ(\psi^\theta_e)^{-1}(\theta(e))-\rho\circ(\psi^\theta_e)^{-1}(0)}{\theta(e)}\\
         & = \sum_{w\sim v}\frac{\rho(w)-\rho(v)}{\theta(vw)}.
    \end{align*}
    Thus the balancing condition for $f$ says that $\rho$ is in the kernel of the edge-weighted graph Laplacian with edge weights $w(e)=\frac{1}{\theta(e)}$. But the kernel of such an operator is spanned by the vector $\rho=\mathbb{1}$ with $\rho(v)=1$ for all $v\in V(\Gamma)$.

    So if $f$ is balanced then $\rho$ must be constant, i.e. each $b_e=0$. Then $\rho_e(x,y) = a_ex+c_e$. If $e\cap e' = v$, then we must also have $\rho_e(\Psi^\theta_e(v,t))=\rho_{e'}(\Psi^\theta_{e'}(v,t))$, i.e. $a_et+c_e=a_{e'}t+c_{e'}$. The connectedness of $\Gamma$ then implies that $a_e$ and $c_e$ are independent of $e$, so let $a=a_e$ and $b=c_e$ to find the result claimed.
\end{proof}

\begin{remark}
The above result can then be realized as a companion to case 1 of Theorem~\ref{balanced separated functions}. In fact in general, any balanced separated harmonic function $g:C^*(\Gamma,\theta)\to\R$ from Theorem~\ref{balanced separated functions}, or any linear combination or convergent series of such separated solutions, gives a balanced harmonic map $f:C^*(\Gamma,\theta)\to C^*(\Gamma,\varphi)$, for any $\varphi$, by the formula
\[
    f(x,t) = \left(x,g(x,t)\right).
\]
However the face-wise linear maps $f$ described in Proposition~\ref{final} are the only ones for which $\abs{\nabla f}^2$ is bounded on $C^*(\Gamma,\theta)$.
\end{remark}

\end{document}